\newtheorem{thm}{Theorem}[section]
\newtheorem{theorem}[thm]{Theorem}
\newtheorem{corollary}[thm]{Corollary}
\newtheorem{proposition}[thm]{Proposition}
\theoremstyle{definition}
\newtheorem{example}[thm]{Example}
\newcommand{\subalign}[1]{%
  \vcenter{%
    \Let@ \restore@math@cr \default@tag
    \baselineskip\fontdimen10 \scriptfont\tw@
    \advance\baselineskip\fontdimen12 \scriptfont\tw@
    \lineskip\thr@@\fontdimen8 \scriptfont\thr@@
    \lineskiplimit\lineskip
    \ialign{\hfil$\m@th\scriptstyle##$&$\m@th\scriptstyle{}##$\crcr
      #1\crcr
    }%
  }
}
\begin{document}

\centerline{\Large \bf On the action of the Koszul map}

\centerline{\Large \bf on the enveloping algebra}

\centerline{\Large \bf  of the general linear
Lie algebra}

\bigskip

\centerline{A. Brini and A. Teolis}
\centerline{\it $^\flat$ Dipartimento di Matematica, Universit\`{a} di
Bologna }
 \centerline{\it Piazza di Porta S. Donato, 5. 40126 Bologna. Italy.}
\centerline{\footnotesize e-mail of corresponding author: andrea.brini@unibo.it}
\medskip

\begin{abstract}

We describe a linear \emph{equivariant isomorphism} $\mathcal{K}$ from the enveloping algebra $\mathbf{U}(gl(n))$
to the  algebra ${\mathbb C}[M_{n,n}] \cong \mathbf{Sym}(gl(n))$ of polynomials in the  entries of a ``generic''
square matrix of order $n$.

The isomorphism $\mathcal{K}$ maps any {\textit{Capelli bitableau}} $[S|T]$ in $\mathbf{U}(gl(n))$ to the
{\textit{(determinantal) bitableau}} $(S|T)$ in ${\mathbb C}[M_{n,n}]$
and any {\textit{Capelli *-bitableau}} $[S|T]^*$ in $\mathbf{U}(gl(n))$ to the
{\textit{(permanental) *-bitableau}} $(S|T)^*$ in ${\mathbb C}[M_{n,n}]$.

These results are  far-reaching generalizations of the pioneering result of J.-L. Koszul \cite{Koszul-BR}
on the Capelli determinant in $\mathbf{U}(gl(n))$ (see, e.g. \cite{Procesi-BR}, \cite{Weyl-BR}).

We introduce {\textit{column}} Capelli bitableaux and  *-bitableaux
in Section \ref{column expansion}; since they are mapped by the isomorphism $\mathcal{K}$
to {\textit{monomials}} in ${\mathbb C}[M_{n,n}]$, this isomorphism can be regarded as a  sharpened
version of the PBW isomorphism for the enveloping algebra $\mathbf{U}(gl(n))$.

Since the center $\boldsymbol{\zeta}(n)$ of $\mathbf{U}(gl(n))$ equals the subalgebra of invariants
$\mathbf{U}(gl(n))^{Ad_{gl(n)}}$,
then
$$
\mathcal{K} \big[ \boldsymbol{\zeta}(n) \big] = {\mathbb C}[M_{n,n}]^{ad_{gl(n)}}.
$$

\end{abstract}

\textbf{Keyword}:
Enveloping algebras; Young tableaux; Lie superalgebras; 

central elements; Capelli determinants.


\section{Introduction}

The starting points of the present work are (\cite{Brini3-BR}, \cite{Brini4-BR}):
\begin{itemize}

\item [--] The linear operator $\mathcal{B} : {\mathbb C}[M_{n,n}] \rightarrow \mathbf{U}(gl(n))$
that maps any {\textit{(determinantal) bitableau}} $(S|T)$ in ${\mathbb C}[M_{n,n}]$ to the
{\textit{Capelli bitableau}} $[S|T]$ in $\mathbf{U}(gl(n))$.

\item [--] The linear operator $\mathcal{B^*} : {\mathbb C}[M_{n,n}] \rightarrow \mathbf{U}(gl(n))$
that maps any {\textit{(permanental) bitableau}} $(S|T)^*$ in ${\mathbb C}[M_{n,n}]$ to the
{\textit{Capelli *-bitableau}} $[S|T]^*$ in $\mathbf{U}(gl(n))$.
\end{itemize}

The map
$$\mathcal{K} : \mathbf{U}(gl(n)) \rightarrow {\mathbb C}[M_{n,n}] \cong \mathbf{Sym}(gl(n))$$
introduced by Koszul in 1981 \cite{Koszul-BR} is proved to be the inverse of both $\mathcal{B}$ and
$\mathcal{B}^*$. Then  $\mathcal{B}$,  $\mathcal{B}^*$,
$\mathcal{K}$ are vector space isomorphisms and $\mathcal{B} = \mathcal{B}^*$.

Since the set of {\textit{standard}}  bitableaux is a basis
of ${\mathbb C}[M_{n,n}]$ (\cite{drs-BR}, \cite{DKR-BR}, \cite{DEP-BR}, \cite{rota-BR}),
then the set of {\textit{standard}} Capelli bitableaux
is a  basis of $\mathbf{U}(gl(n))$.
Since the set of {\textit{costandard}}  *-bitableaux is a basis
of ${\mathbb C}[M_{n,n}]$,
then the set of {\textit{costandard}} Capelli *-bitableaux
is a  basis of $\mathbf{U}(gl(n))$.

Some  of these topics were treated in a sketchy way in the present author's notes
\cite{Brini3-BR}, \cite{Brini4-BR} (in the more general setting
of superalgebras), in a rather cumbersome notation and almost without proofs.
The main novelty of the present approach is the major role played by  
{\it{column Capelli bitableaux}} and  {\it{column Capelli *-bitableaux}};
although they are far from being ``monomials'' in the
enveloping algebra $\mathbf{U}(gl(n))$, their images with respect to the Koszul isomorphism 
$\mathcal{K}$ are indeed monomials in the polynomial algebra ${\mathbb C}[M_{n,n}]$.
Therefore, column Capelli bitableaux and column Capelli *-bitableaux play the same  
role in $\mathbf{U}(gl(n))$ that monomials play in ${\mathbb C}[M_{n,n}]$ and this 
leads to a new and transparent presentation.

The  expressions of column Capelli bitableaux and column Capelli *-bitableaux as elements of $\mathbf{U}(gl(n))$
can be simply computed.

Capelli bitableaux and Capelli *-bitableaux
expand  - up to  a global sign - into column Capelli bitableaux
just in the same way as \textit{determinantal}
bitableaux and \textit{permanental} *-bitableaux
expand into the  corresponding monomials
in  ${\mathbb C}[M_{n,n}]$ (Laplace expansions).

The isomorphism $\mathcal{B} = \mathcal{K}^{-1}$ maps any \emph{right symmetrized bitableau} 
$(S|\fbox{$T$}) \in {\mathbb C}[M_{n,n}]$ (\cite{Brini1-BR}, \cite{Brini2-BR})
to the {\textit{right Young-Capelli bitableau}} $[S|\fbox{$T$}]$ in $\mathbf{U}(gl(n))$. The basis
of {\textit{standard}} right Young-Capelli bitableaux acts in a  remarkable way on the
{\textit{Gordan-Capelli basis}} of {\textit{standard}} right symmetrized bitableaux.
Moreover, the elements of the \emph{Schur-Sahi-Okounkov} basis of the center $\boldsymbol{\zeta}(n)$ of
$\mathbf{U}(gl(n))$ (\emph{quantum immanants} \cite{Sahi1-BR}, \cite{Okounkov-BR}, \cite{Okounkov1-BR}, 
\cite{OkOlsh-BR}) admit  quite effective presentations as linear combinations of right Young-Capelli 
bitableaux as well as of Capelli immanants \cite{Brini5-BR} and \cite{BriniTeolis-BR}.

The Koszul map $\mathcal{K}$ is proved to be an \emph{equivariant} isomorphism with respect to the adjoint
representations of $gl(n)$ on $\mathbf{U}(gl(n))$ and ${\mathbb C}[M_{n,n}]$ (\emph{polarization action}), 
respectively. 
Since the center $\boldsymbol{\zeta}(n)$ of $\mathbf{U}(gl(n))$ is the subalgebra of 
$Ad_{gl(n)}$-invariants of $\mathbf{U}(gl(n))^{Ad_{gl(n)}}$,
then
$$
\mathcal{K} \big[ \boldsymbol{\zeta}(n) \big] = {\mathbb C}[M_{n,n}]^{ad_{gl(n)}},
$$
where ${\mathbb C}[M_{n,n}]^{ad_{gl(n)}}$  
is the subalgebra of 
$ad_{gl(n)}$-invariants of ${\mathbb C}[M_{n,n}]$.

\section{Determinantal {\it{Young bitableaux}}, permanental {\it{Young *-bitableaux}} 
and {\it{right symmetrized bitableaux}} in the polynomial algebra ${\mathbb C}[M_{n,n}]$}\label{sec 2}

Let
$$
{\mathbb C}[M_{n,n}] =    {\mathbb C}[(i|j)]_{i,j=1,\ldots,n}
$$
be the polynomial algebra in the (commutative)
 ``generic" entries $(i|j)$ of the matrix:
$$
M_{n,n} = \left[ (i|j) \right]_{i, j=1,\ldots,n}=
 \left(
 \begin{array}{ccc}
 (1|1) & \ldots & (1|n) \\
 \vdots  &        & \vdots \\
 (n|1) & \ldots & (n|n) \\
 \end{array}
 \right).
$$

Given the \emph{standard basis} $\big\{ e_{ij}; \ i, j = 1, 2, \ldots, n \big\}$
of the \emph{general linear Lie algebra} $gl(n)$, the map
$
e_{ij} \mapsto (i|j)
$
induces an isomorphism $\mathbf{Sym}(gl(n)) \cong {\mathbb C}[M_{n,n}]$.

Let $\omega = i_1i_2 \cdots i_p$, $\varpi = j_1j_1 \cdots j_p$ be  words on the alphabet $ \{1, 2, \ldots, n \}$.

Following \cite{rota-BR} and \cite{Brini1-BR},
the {\it{biproduct}} of the two words $\omega$ and $\varpi$
\begin{equation}\label{biproduct}
(\omega|\varpi) = (i_1i_2 \cdots i_p|j_1j_2 \cdots j_p)
\end{equation}
is the  \emph{signed minor}:
$$
(\omega|\varpi) = (-1)^{p \choose 2} \ 
det \Big( \ (i_r|j_s) \ \Big)_{r, s = 1, 2, \ldots, p} \in {\mathbb C}[M_{n,n}]. 
$$

Let $S = (\omega_1, \omega_2, \ldots, \omega_p)$ and $T = (\varpi_1, \varpi_2, \ldots, \varpi_p)$ be Young tableaux on
$\{1, 2, \ldots, n \}$  of the same shape $\lambda$.

Following again  \cite{rota-BR} and \cite{Brini1-BR},  the (determinantal) {\it{Young bitableau}}
\begin{equation}
(S|T) =
\left(
\begin{array}{c}
\omega_1\\ \omega_2\\ \vdots\\ \omega_p
\end{array}
\right| \left.
\begin{array}{c}
\varpi_1\\ \varpi_2\\ \vdots\\  \varpi_p
\end{array}
\right)
\end{equation}\label{product}
is the \emph{signed} product of the biproducts of the pairs of corresponding  rows:
\begin{equation}\label{bitableau}
(S|T) =
\textbf{s} \ (\omega_1|\varpi_1)(\omega_2|\varpi_2) \cdots (\omega_p|\varpi_p),
\end{equation}
where
\begin{equation}\label{crossing sign}
\textbf{s}  = (-1)^{\ell(\omega_2)\ell(\varpi_1)+\ell(\omega_3)(\ell(\varpi_1)+\ell(\varpi_2))
+ \cdots +\ell(\omega_p)(\ell(\varpi_1)+\ell(\varpi_2)+\cdots+\ell(\varpi_{p-1}))},
\end{equation}
and the symbol $\ell(w)$ denotes the length of the word $w$.

The {\it{*-biproduct}} of the two words $\omega$ and $\varpi$
\begin{equation}
(\omega|\varpi)^* = (i_1i_2 \cdots i_p|j_1j_2 \cdots j_p)^*
\end{equation}
is the \emph{permanent}:
$$
(\omega|\varpi)^* =  \ per \Big( \ (i_r|j_s) \ \Big)_{r, s = 1, 2, \ldots, p} \in {\mathbb C}[M_{n,n}].
$$

Let $S = (\omega_1, \omega_2, \ldots, \omega_p)$ and $T = (\varpi_1, \varpi_2, \ldots, \varpi_p)$ be Young tableaux on
$\{1, 2, \ldots, n \}$  of the same shape $\lambda$.

Following again  \cite{rota-BR} and \cite{Brini1-BR},  the (permanental) {\it{Young *-bitableau}}
\begin{equation}
(S|T)^* =
\left(
\begin{array}{c}
\omega_1\\ \omega_2\\ \vdots\\ \omega_p
\end{array}
\right| \left.
\begin{array}{c}
\varpi_1\\ \varpi_2\\ \vdots\\  \varpi_p
\end{array}
\right)^*
\end{equation}
is the product of the *-biproducts of the pairs of corresponding  rows:
\begin{equation}\label{*-bitableau}
(S|T)^* =
 (\omega_1|\varpi_1)^*(\omega_2|\varpi_2)^* \cdots (\omega_p|\varpi_p)^*.
\end{equation}

A \emph{column} Young tableau of \emph{depth} $h$ is a tableau of  shape $(1^h)$.
Then for a column Young bitableau, we have:
\begin{equation}\label{column monomial}
\left(
\begin{array}{c}
i_1\\  i_2 \\ \vdots \\ i_h
\end{array}
\right| \left.
\begin{array}{c}
j_1\\  j_2 \\ \vdots \\ j_h
\end{array}
\right)
=
(-1)^{h \choose 2}(i_1|j_1)(i_2|j_2) \cdots (i_h|j_h)
\end{equation}
and for a column Young *-bitableau, we have:
\begin{equation}\label{column *-monomial}
\left(
\begin{array}{c}
i_1\\  i_2 \\ \vdots \\ i_h
\end{array}
\right| \left.
\begin{array}{c}
j_1\\  j_2 \\ \vdots \\ j_h
\end{array}
\right)^*
=
(i_1|j_1)(i_2|j_2) \cdots (i_h|j_h).
\end{equation}

We recall the definition of the \emph{right symmetrized bitableau} $(S|\fbox{$T$}))$ 
(see, e.g. \cite{Brini1-BR}):
\begin{equation}\label{symmetrized bitableau}
(S|\fbox{$T$})) = \sum_{\overline{T}} \ (S|\overline{T}), 
\end{equation}
where the sum is extended over {\textit{all}} $\overline{T}$ column permuted of $T$ (hence, repeated entries in
a column give rise to multiplicities).

\begin{example}
$$
\left(
\begin{array}{cc}
 1 & 3 \\  2 & 4
\end{array}
\right| \left.
\fbox{$
\begin{array}{cc}
1 & 2 \\ 1 & 3
\end{array}
$} \
\right)
=
2  \left(
\begin{array}{cc}
 1 & 3 \\  2 & 4
\end{array}
\right| \left.
\begin{array}{cc}
1 & 2 \\ 1 & 3
\end{array}
\right)
+
2  \left(
\begin{array}{cc}
 1 & 3 \\  2 & 4
\end{array}
\right| \left.
\begin{array}{cc}
1 & 3 \\ 1 & 2
\end{array}
\right).
$$
\end{example}

We recall same elementary definitions and notational conventions. 
Given a partition (shape) 
$\lambda = (\lambda_1 \geq \lambda_2 \geq \cdots \geq \lambda_p) \vdash n$,
let $\widetilde{\lambda} = 
(\widetilde{\lambda}_1, \widetilde{\lambda}_2 \geq \cdots \geq \widetilde{\lambda}_q) \vdash n$
denote its \emph{conjugate} partition, where 
$\widetilde{\lambda}_s = \# \{t; \lambda_t \geq s \}$. Similarly, given a Young tableau 
$S$ of shape $sh(S) = \lambda$, let $\widetilde{S}$ denote its conjugate (dual) Young tableau.
In plain words, $\widetilde{S}$ is the tableau whose rows are the columns of $S$ and whose 
shape is $sh(\widetilde{S}) = \widetilde{\lambda}$. 
A Young tableau $X$ on the (linearly ordered) set
$L = \{ 1 < 2 <  \cdots < n \}$ is said to be \emph{standard} whenever its rows are increasing 
from left to right and its columns are non decreasing downwards. 
In a dual way, a Young tableau $Y$ is said to be \emph{costandard} whenever its conjugate
Young tableau $\widetilde{Y}$ is standard.

We recall the basis theorems for \emph{standard determinantal bitableaux} (see, e.g. \cite{drs-BR},
\cite{DKR-BR}, \cite{DEP-BR}), 
\emph{costandard permanental *-bitableaux} 
\cite{rota-BR} and  \emph{right symmetrized bitableaux} \cite{Brini1-BR}, respectively.

\begin{proposition}\label{bases thms}
The sets
\begin{itemize}

\item [--] $\Big \{ (S|T); \ sh(S) =sh(T) =  \lambda, \ \lambda_1 \leq n, \ S,  T \ standard \Big \}$,

\item [--] $\Big \{ (U|V)^*; \ sh(U) =sh(V) =  \mu, \ \widetilde{\mu_1} \leq n, \ U,  V \ costandard \Big \}$,

\item [--] $\Big\{ (S|\fbox{$T$});  \ sh(S) = sh(T) = \lambda, \ \lambda_1 \leq n, \ S, T \ standard \Big\}$
\end{itemize}

are linear bases of ${\mathbb C}[M_{n,n}]$. 
\end{proposition}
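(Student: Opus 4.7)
The overall strategy is to prove all three claims in parallel via a common two-step template: (a) a \emph{straightening} procedure showing that each proposed set spans $\mathbb{C}[M_{n,n}]$, and (b) a dimension/cardinality count in each multidegree that forces linear independence. The bridge for (b) is the Robinson--Schensted--Knuth correspondence: a monomial of total degree $d$ in the entries $(i|j)$ corresponds bijectively to an $n\times n$ matrix of non-negative integers summing to $d$, and RSK associates to such a matrix a pair $(P,Q)$ of tableaux of common shape on $\{1,\ldots,n\}$ that are standard in the sense of this paper (rows strictly increasing, columns weakly increasing). In each multidegree, the cardinality of pairs of standard tableaux of common shape thus matches the dimension of the corresponding graded piece of $\mathbb{C}[M_{n,n}]$.

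For part (i), spanning is the classical Doubilet--Rota--Stein straightening algorithm. The engine is the family of Pl\"ucker-type exchange relations for signed minors, obtainable from Laplace expansion along two adjacent rows of a bitableau: these rewrite a biproduct pair $(\omega_k|\varpi_k)(\omega_{k+1}|\varpi_{k+1})$ violating the standard condition as a signed sum of biproducts in which a chosen lex-type measure on the tableau entries strictly decreases. Iterating the procedure terminates and reduces any determinantal bitableau to a linear combination of standard ones; the RSK cardinality count above then forces linear independence.

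Part (ii) is dual. Since permanents are invariant under independent row and column permutations of the underlying indices, a \emph{*-bitableau} may be straightened towards costandard form by rearranging entries within the appropriate rows and columns, and an analogous RSK count -- now indexed by the conjugate shapes -- closes the argument. For part (iii), the right symmetrized bitableau is by construction a signed sum of determinantal bitableaux over all column-permuted $\overline{T}$; with a lex order chosen so that the summand with $\overline{T}=T$ leads, one checks that the matrix expressing standard right symmetrized bitableaux in terms of standard determinantal bitableaux is triangular with nonzero diagonal entries, so the result reduces to part (i). The principal technical obstacle across all three parts is the consistent choice of the lex order that makes the straightening strictly monotone and the triangularity in (iii) genuine; once this order is fixed, the three arguments follow the same blueprint.
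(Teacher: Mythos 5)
The paper does not actually prove this proposition: it is recalled as a known result, with citations to Doubilet--Rota--Stein, D\'esarm\'enien--Kung--Rota and De~Concini--Eisenbud--Procesi for the determinantal basis, to Grosshans--Rota--Stein for the permanental one, and to Brini--Palareti--Teolis for the symmetrized one. Judged on its own, your part (i) is essentially the classical argument and is sound in outline: straightening gives spanning, and a graded dimension count via (transposed) RSK --- which preserves row and column sums and produces exactly the pairs of row-strict, column-weak tableaux of common shape with $\lambda_1\le n$ --- forces independence.

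Parts (ii) and (iii) have genuine gaps. For (ii), invariance of the permanent under permutations only lets you reorder the letters \emph{within a single row} of $U$ or of $V$ (and permute whole rows of the bitableau, since the factors commute); it cannot repair a failure of costandardness \emph{down a column}, because the entries of one column of $U$ sit in different permanental factors. A minimal example: $(1|1)(1|1)$ is the non-costandard column *-bitableau of shape $(1,1)$, and its costandard expression is $\tfrac12\,(1\,1|1\,1)^*$ --- a straightening identity with a non-unit coefficient, not a rearrangement. What your argument needs here is the permanental (symmetric) straightening law of Grosshans--Rota--Stein, the exact analogue of the Pl\"ucker-type exchange you invoke in (i). For (iii), note first that the paper's definition of $(S|\fbox{$T$})$ is an \emph{unsigned} sum over column-permuted $\overline{T}$, not a signed one. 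More importantly, the claimed triangularity is the substantive content of the Gordan--Capelli basis theorem: the non-identity summands $(S|\overline{T})$ are generally nonstandard and must themselves be straightened, and a priori their straightened expansions can contribute back to the coefficient of $(S|T)$; so ``one checks that the matrix is triangular with nonzero diagonal'' is precisely the step that requires proof, and the cited reference establishes it through the (super)straightening formula rather than by a bare choice of lexicographic order.
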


\section{Polarization operators and Lie algebra representations of $gl(n)$ 
on ${\mathbb C}[M_{n,n}]$ and $\mathbf{U}(gl(n))$}

Given $i, j = 1, 2, \ldots, n$, the \emph{left polarization operator}
(of $j$ to $i$)
$D^{\textit{l}}_{ij}$ is the linear operator from ${\mathbb C}[M_{n,n}]$ 
to itself defined by the conditions:
\begin{itemize}

\item [--] $D^{\textit{l}}_{ij}$ is a derivation

\item [--] $D^{\textit{l}}_{ij} \big( (h|k) \big) = \delta_{jh} (i|k)$  for  every $k.$

\end{itemize}

Similarly, the \emph{right polarization operator}
(of $i$ to $j$)
$D^{\textit{r}}_{ji}$ is the linear operator from ${\mathbb C}[M_{n,n}]$ 
to itself defined by the conditions:
\begin{itemize}

\item [--] $D^{\textit{r}}_{ji}$ is a derivation

\item [--] $D^{\textit{r}}_{ji} \big( (h|k) \big) = \delta_{ik} (h|j)$  for  every $h$.

\end{itemize}

In the following, we consider \emph{three} Lie algebra representations
$$
gl(n) \rightarrow End_{{\mathbb C}} \big[ {\mathbb C}[M_{n,n}] \big]
$$
and the corresponding Lie modules. 

\begin{enumerate}

\item
The \emph{left} (covariant) representation  $\rho^{\textit{l}}$ is defined by setting
$$
\rho^{\textit{l}} : e_{ij} \mapsto D^{\textit{l}}_{ij}.
$$

\item
The \emph{right} (contravariant) representation  $\rho^{\textit{r}}$ is defined  by setting
$$
\rho^{\textit{r}} : e_{ij} \mapsto -D^{\textit{r}}_{ji}.
$$

\item  Notice that $D^{\textit{l}}_{ij}D^{\textit{r}}_{hk} = D^{\textit{r}}_{hk}D^{\textit{l}}_{ij}$.
The  \emph{adjoint} representation $ad_{gl(n)}$ is defined by setting
$$
ad_{gl(n)} : e_{ij} \mapsto D^{\textit{l}}_{ij} - D^{\textit{r}}_{ji}.
$$
\end{enumerate}

Given $i, j = 1, 2, \ldots, n$, consider   the linear operator $T_{ij}$
from $\mathbf{U}(gl(n))$ to itself defined by 
setting
$$
T_{ij} \big(\textbf{M} \big) = e_{ij} \textbf{M} - \textbf{M}  e_{ij},
$$
for every $\textbf{M} \in \mathbf{U}(gl(n))$.

We recall that $T_{ij}$ is the unique derivation of $\mathbf{U}(gl(n))$ such that 
$$
T_{ij} \big(e_{st} \big) = \delta_{js} e_{it} - \delta_{it} e_{sj} = \big[e_{ij}, e_{st} \big] 
$$
for every $s, t = 1, 2, \ldots, n$.
Hence 
$$
T_{ij} \circ T_{hk} - T_{hk} \circ T_{ij} = \delta_{jh} T_{ik} - \delta_{ik} T_{hj}.
$$
The Lie algebra representation
$$
Ad_{gl(n)} : gl(n) \rightarrow End_{{\mathbb C}} \big[ \mathbf{U}(gl(n)) \big]
$$
$$
e_{ij} \mapsto T_{ij}
$$
is the \emph{adjoint} representation of $\mathbf{U}(gl(n))$ on itself.

\section{The superalgebraic approach to the enveloping algebra $\mathbf{U}(gl(n))$}\label{sec 3}

In this Section, we provide a synthetic presentation   of the {\it{superalgebraic method of virtual variables}} for $gl(n)$.

This method was developed  by the present authors
for the   general linear Lie superalgebras $gl(m|n)$ \cite{KAC1-BR}, in the series of notes
\cite{Bri-BR}, \cite{BriUMI-BR}, \cite{Brini1-BR}, \cite{Brini2-BR}, \cite{Brini3-BR},
\cite{Brini4-BR}.

The technique of virtual variables is  an extension of Capelli's method of  {\textit{ variabili ausilarie}}
(Capelli \cite{Cap4-BR}, see also Weyl \cite{Weyl-BR}).

Capelli introduced the technique of {\textit{ variabili ausilarie}} in order to manage symmetrizer operators in
terms of polarization operators and to  simplify the study of some skew-symmetrizer operators (namely, the famous central Capelli operator).

Capelli's idea was well suited to
treat symmetrization, but it did not work in the same efficient way while dealing with skew-symmetrization.

One had to wait the introduction of the notion of {\textit{superalgebras}} 
(see,e.g. \cite{Scheu-BR},  \cite{KAC1-BR})
 to have the right conceptual framework to treat symmetry and skew-symmetry in one and the same way.
To the best of our knowledge, the first mathematician who intuited the connection between Capelli's idea and superalgebras was
Koszul in $1981$  \cite{Koszul-BR}.
In particular, Koszul proved that the classical determinantal Capelli operator can be rewritten - in a much simpler way - by adding to the symbols to be dealt with an extra auxiliary symbol that obeys to different commutation relations.

The superalgebraic method of virtual variables allows us to express remarkable classes 
of elements in $\mathbf{U}(gl(n))$
as images - with respect to the {\textit{Capelli devirtualization epimorphism}} - of simple
{\it{monomials}} and to obtain transparent combinatorial 
descriptions of their actions on irreducible $gl(n)-$modules.

This method is very well suited for the study of the \emph{polarization action}
of $\mathbf{U}(gl(n))$ on ${\mathbb C}[M_{n,n}]$ and for the study of 
the \emph{center} of $\mathbf{U}(gl(n))$.

\subsection{The superalgebras ${\mathbb C}[M_{m_0|m_1+n,n}]$ and $gl(m_0|m_1+n)$ }

\subsubsection{The general linear Lie super algebra $gl(m_0|m_1+n)$}
 Given a vector space $V_n$ of dimension $n$, we will regard it as a subspace of a $
\mathbb{Z}_2-$graded vector space
 $W = W_0 \oplus W_1$, where
$$
W_0 = V_{m_0}, \qquad W_1 = V_{m_1} \oplus V_n.
$$
The  vector spaces
$V_{m_0}$ and $V_{m_1}$ (we assume that 
$dim(V_{m_0})=m_0$ and $dim(V_{m_1})=m_1$ are ``sufficiently large'') are called
the  {\textit{positive virtual (auxiliary)
vector space}},  the {\textit{negative virtual (auxiliary) vector space}}, respectively, and $V_n$ 
is called the {\textit{(negative) proper vector space}}.

 The inclusion $V_n \subset W$ induces a natural embedding of the ordinary general 
linear Lie algebra $gl(n)$ of $V_n$ into the
 {\textit{auxiliary}}
general linear Lie {\it{superalgebra}} $gl(m_0|m_1+n)$ of $W = W_0 \oplus W_1$ (see, e.g. \cite{KAC1-BR}, 
\cite{Scheu-BR}).

Let
$
A_0 = \{ \alpha_1, \ldots, \alpha_{m_0} \},$  $A_1 = \{ \beta_1, \ldots, \beta_{m_1} \},$
$L = \{ 1, 2,  \ldots, n \}$
denote \emph{fixed  bases} of $V_{m_0}$, $V_{m_1}$ and $V_n$, respectively; 
therefore $|\alpha_s| = 0 \in \mathbb{Z}_2,$
and $|\beta_t| = |i|   = 1 \in \mathbb{Z}_2.$

Let
$$
\{ e_{a, b}; a, b \in A_0 \cup A_1 \cup L \}, \qquad |e_{a, b}| =
|a|+|b| \in \mathbb{Z}_2
$$
be the standard $\mathbb{Z}_2-$homogeneous basis of the Lie superalgebra $gl(m_0|m_1+n)$ provided by the
elementary matrices. The elements $e_{a, b} \in gl(m_0|m_1+n)$ are $\mathbb{Z}_2-$homogeneous of
$\mathbb{Z}_2-$degree $|e_{a, b}| = |a| + |b|.$

The superbracket of the Lie superalgebra $gl(m_0|m_1+n)$ has the following explicit form:
$$
\left[ e_{a, b}, e_{c, d} \right] = \delta_{bc} \ e_{a, d} - (-1)^{(|a|+|b|)(|c|+|d|)} \delta_{ad}  \ e_{c, b},
$$
$a, b, c, d \in A_0 \cup A_1 \cup L.$

In the following, the elements of the sets $A_0, A_1, L$ will be called
\emph{positive virtual symbols}, \emph{negative virtual symbols} and \emph{negative proper symbols},
respectively.

\subsubsection{The  supersymmetric algebra ${\mathbb C}[M_{m_0|m_1+n,n}]$}

We regard the commutative algebra ${\mathbb C}[M_{n,n}]$
as a subalgebra of the \textit{``auxiliary'' supersymmetric algebra}
$$
{\mathbb C}[M_{m_0|m_1+n,n}] = {\mathbb C}\big[ (\alpha_s|j), (\beta_t|j), (i|j) \big]
$$
 generated by the ($\mathbb{Z}_2$-graded) variables $(\alpha_s|j), (\beta_t|j), (i|j)$,
$j = 1, 2, \ldots, n$,
 where
 $$
 |(\alpha_s|j)| = |\alpha_s| = 1 \in \mathbb{Z}_2, \quad \   |(\beta_t|j)| = |\beta_t|+1 =  0 \in \mathbb{Z}_2
 $$
and $|(i|j)| = |i|+|j| = 0,$ subject to the commutation relations:
$$
(a|h)(b|k) = (-1)^{|(a|h)||(b|k)|} \ (b|k)(a|h),
$$
for 
$a, b \in  \{ \alpha_1, \ldots, \alpha_{m_0} \} \cup \{ \beta_1, \ldots, \beta_{m_1} \} \cup \{1, 2, \ldots , n\}.$

In plain words, all the variables commute each other, with the exception of
pairs of variables $(\alpha_s|j), (\alpha_t|j)$ that skew-commute:
$$
(\alpha_s|j) (\alpha_t|j) = - (\alpha_t|j) (\alpha_s|j).
$$

In the standard notation of multilinear algebra, we have:
\begin{align*}
{\mathbb C}[M_{m_0|m_1+n,n}]
& \cong \Lambda \big[ W_0 \otimes P_n \big]
\otimes      {\mathrm{Sym}} \big[ W_1  \otimes P_n \big] \\
 & =
 \Lambda \big[ V_{m_0} \otimes P_n \big]
\otimes      {\mathrm{Sym}} \big[ (V_{m_1} \oplus V_n)  \otimes P_n \big]
\end{align*}
where $P_n = (P_n)_1$ denotes the trivially  $\mathbb{Z}_2-$graded  vector space with distinguished basis 
$\{j; \ |j|=1, \ j = 1, 2, \ldots, n \}.$

The algebra  ${\mathbb C}[M_{m_0|m_1+n,n}]$ is a supersymmetric 
\textit{$\mathbb{Z}_2-$graded algebra} (superalgebra), whose
$\mathbb{Z}_2-$graduation is  inherited by the natural one in the exterior algebra.

\subsubsection{Left superderivations and left superpolarizations}

A {\it{left superderivation}} $D^{\textit{l}}$ ($\mathbb{Z}_2-$homogeneous of degree $|D^{\textit{l}}|$) (see, e.g. \cite{Scheu-BR}, \cite{KAC1-BR}) on
${\mathbb C}[M_{m_0|m_1+n,n}]$ is an element of the superalgebra $End_\mathbb{C}[\mathbb{C}[M_{m_0|m_1+n,n}]]$
that satisfies "Leibniz rule"
$$
D^{\textit{l}}(\textbf{p} \cdot \textbf{q}) = D^{\textit{l}}(\textbf{p}) \cdot \textbf{q} + 
(-1)^{|D^{\textit{l}}||\textbf{p}|} \textbf{p} \cdot D^{\textit{l}}(\textbf{q}),
$$
for every $\mathbb{Z}_2-$homogeneous of degree $|\textbf{p}|$ element $\textbf{p} \in \mathbb{C}[M_{m_0|m_1+n,n}].$

Given two symbols $a, b \in A_0 \cup A_1 \cup L$, the {\textit{left superpolarization}} $D^{\textit{l}}_{a,b}$ 
of $b$ to $a$
is the unique {\it{left}} superderivation of ${\mathbb C}[M_{m_0|m_1+n,n}]$ of $\mathbb{Z}_2-$degree 
$|D^{\textit{l}}_{a,b}| = |a| + |b| \in \mathbb{Z}_2$ such that
$$
D^{\textit{l}}_{a,b} \left( (c|j) \right) = \delta_{bc} \ (a|j), \ c \in A_0 \cup A_1 \cup L, \ j = 1, \ldots, n.
$$

Informally, we say that the operator $D^{\textit{l}}_{a,b}$ {\it{annihilates}} the symbol $b$ and {\it{creates}} the symbol $a$.

\subsubsection{The superalgebra ${\mathbb C}[M_{m_0|m_1+n,n}]$ as a $\mathbf{U}(gl(m_0|m_1+n))$-module}

Since
$$
D^{\textit{l}}_{a,b}D^{\textit{l}}_{c,d} -(-1)^{(|a|+|b|)(|c|+|d|)}D^{\textit{l}}_{c,d}D^{\textit{l}}_{a,b} =
\delta_{b,c}D^{\textit{l}}_{a,d} -(-1)^{(|a|+|b|)(|c|+|d|)}\delta_{a,d}D^{\textit{l}}_{c,b},
$$
the map
$$
e_{a,b} \mapsto D^{\textit{l}}_{a,b}, \qquad a, b \in A_0 \cup A_1 \cup L
$$
is a Lie superalgebra morphism from $gl(m_0|m_1+n)$ to $End_\mathbb{C}\big[\mathbb{C}[M_{m_0|m_1+n,n}]\big]$
and, hence, it uniquely defines a
representation:
$$
\varrho : \mathbf{U}(gl(m_0|m_1+n)) \rightarrow End_\mathbb{C}[\mathbb{C}[M_{m_0|m_1+n,n}]].
$$

In the following, we always regard the superalgebra $\mathbb{C}[M_{m_0|m_1+n,n}]$ as a $\mathbf{U}(gl(m_0|m_1+n))-$supermodule,
with respect to the action induced by the representation $\varrho$:
$$
e_{a,b} \cdot \mathbf{p} = D^{\textit{l}}_{a,b}(\mathbf{p}),
$$
for every $\mathbf{p} \in {\mathbb C}[M_{m_0|m_1+n,n}].$

We recall that  $\mathbf{U}(gl(m_0|m_1+n))-$module  $\mathbb{C}[M_{m_0|m_1+n,n}]$
is  a semisimple module, whose simple submodules are - up to isomorphism - {\it{Schur supermodules}} 
(see, e.g. \cite{Brini1-BR}, \cite{Brini2-BR}, \cite{Bri-BR}. For a more traditional presentation, see also 
\cite{CW-BR}).

Clearly, $\mathbf{U}(gl(0|n)) = \mathbf{U}(gl(n))$ is a subalgebra of $\mathbf{U}(gl(m_0|m_1+n))$
and the subalgebra $\mathbb{C}[M_{n,n}]$ is a $\mathbf{U}(gl(n))-$submodule of  $\mathbb{C}[M_{m_0|m_1+n,n}]$.

\subsection{The virtual algebra $Virt(m_0+m_1,n)$ and the virtual
presentations of elements in $\mathbf{U}(gl(n))$}

We say that a product
$$
e_{a_mb_m} \cdots e_{a_1b_1} \in \mathbf{U}(gl(m_0|m_1+n)), 
\quad a_i, b_i \in A_0 \cup A_1 \cup L, \ i= 1, \ldots, m
$$
is an {\textit{irregular expression}} whenever
  there exists a right subword
$$e_{a_i,b_i} \cdots e_{a_2,b_2} e_{a_1,b_1},$$
$i \leq m$ and a
virtual symbol $\gamma \in A_0 \cup A_1$ such that
\begin{equation}\label{irrexpr-BR}
 \# \{j;  b_j = \gamma, j \leq i \}  >  \# \{j;  a_j = \gamma, j < i \}.
\end{equation}

The meaning of an irregular expression in terms of the action of  $\mathbf{U}(gl(m_0|m_1+n))$  
by left superpolarization on
the algebra $\mathbb{C}[M_{m_0|m_1+n,n}]$ is that there exists a
virtual symbol $\gamma$ and a right subsequence in which the symbol $\gamma$ is \emph{annihilated} 
more times than it was already \emph{created} and, therefore, the action of an irregular expression
on the algebra $\mathbb{C}[M_{n,n}]$ is \emph{zero}. 

\begin{example}
Let $\gamma \in  A_0 \cup A_1$ and $x_i, x_j \in L.$ The product
$$
e_{\gamma,x_j} e_{x_i,\gamma} e_{x_j,\gamma} e_{\gamma,x_i}
$$
is an irregular expression.
\end{example}\qed

Let $\mathbf{Irr}$   be
the {\textit{left ideal}} of $\mathbf{U}(gl(m_0|m_1+n))$ generated by the set of
irregular expressions.

\begin{proposition}
The superpolarization action
of any element of $\mathbf{Irr}$ on the subalgebra $\mathbb C[M_{n,n}] \subset \mathbb{C}[M_{m_0|m_1+n,n}]$ - via the representation $\varrho$ -
is identically zero.
\end{proposition}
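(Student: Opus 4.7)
My plan is to reduce, via the left-ideal structure, to showing that a single irregular monomial $E = e_{a_m,b_m} \cdots e_{a_1,b_1}$ annihilates every $\mathbf{p} \in \mathbb{C}[M_{n,n}]$. Since $\mathbf{Irr}$ is the left ideal generated by such monomials, any element has the form $\sum_k u_k I_k$ with $u_k \in \mathbf{U}(gl(m_0|m_1+n))$ and $I_k$ irregular; the representation property of $\varrho$ then gives $\varrho(u_k I_k)\mathbf{p} = \varrho(u_k)\bigl(\varrho(I_k)\mathbf{p}\bigr)$, so once each $\varrho(I_k)\mathbf{p}$ vanishes the full sum vanishes as well.

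By linearity I may further restrict to a single proper monomial $\mathbf{p} = (i_1|j_1) \cdots (i_r|j_r)$ with every $i_s \in L$, and compute $\varrho(E)\mathbf{p}$ by applying $D^{\textit{l}}_{a_1,b_1}$ first, then $D^{\textit{l}}_{a_2,b_2}$, and so on. Expanding iteratively via the super-Leibniz rule, the outcome is a signed sum over ``paths'' $(t_1,\ldots,t_m)$ that record, at each step $j$, which factor of the current monomial the superderivation acts on; a path contributes nonzero only when, at every step $j$, the left entry of the $t_j$-th factor equals $b_j$, and in that case the effect is to replace that left entry by $a_j$.

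Now fix the virtual symbol $\gamma \in A_0 \cup A_1$ and the index witnessing irregularity; I will show that every surviving path dies no later than step $i$. Track $N_j$, the number of factors with left entry $\gamma$ in the current monomial of a surviving path after step $j$. Since no virtual symbol occurs in $\mathbf{p}$ we have $N_0 = 0$, and by construction $N_j - N_{j-1} = [a_j = \gamma] - [b_j = \gamma]$, where the Iverson brackets count creation and annihilation of a $\gamma$; moreover, survival of step $j$ with $b_j = \gamma$ forces $N_{j-1} \geq 1$, because the derivation needs an actual $\gamma$-factor to act on. Choosing the irregularity index $i$ minimal, a short counting argument on the defining inequality forces $b_i = \gamma$ and
$$
N_{i-1} = \#\{k < i : a_k = \gamma\} - \#\{k < i : b_k = \gamma\} \leq 0,
$$
so together with the nonnegativity $N_{i-1} \geq 0$ we get $N_{i-1} = 0$, and step $i$ produces zero on every branch of the expansion.

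The only place that demands real care is the bookkeeping at step $i$: translating the strict inequality $\#\{k \leq i : b_k = \gamma\} > \#\{k < i : a_k = \gamma\}$ of the irregularity definition into the assertion $N_{i-1} \leq 0$, and verifying that the minimal witness $i$ indeed satisfies $b_i = \gamma$. Once that invariant is in hand, the rest is formal: super-signs play no role because we are proving a vanishing statement, so the graded setting is no harder than the ungraded one.
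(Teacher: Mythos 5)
Your argument is correct and is essentially the paper's own (the paper only sketches it informally, noting that for an irregular expression some virtual symbol $\gamma$ is annihilated more times than it was created along a right subword): your counting invariant $N_{j}$ together with the minimal witness $i$ is exactly the rigorous form of that remark, and the reduction via the left-ideal structure and linearity is the intended one. No gaps.
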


\begin{proposition}\emph{(\cite{Brini3-BR}, \cite{BriUMI-BR})}
The sum ${\mathbf{U}}(gl(0|n)) + \mathbf{Irr}$ is a direct sum of vector subspaces of $\mathbf{U}(gl(m_0|m_1+n)).$
\end{proposition}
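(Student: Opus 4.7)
The plan is to exploit the preceding proposition together with the faithfulness of the natural polarization representation of $\mathbf{U}(gl(n))$ on $\mathbb{C}[M_{n,n}]$, reducing the direct-sum assertion to the triviality of the intersection $\mathbf{U}(gl(0|n)) \cap \mathbf{Irr}$.

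Suppose $u \in \mathbf{U}(gl(0|n)) \cap \mathbf{Irr}$. The preceding proposition immediately gives $\varrho(u)|_{\mathbb{C}[M_{n,n}]} = 0$. On the other hand, for $a, b \in L$ the superpolarization $D^{\textit{l}}_{a,b}$ is an even derivation that sends the generators $(h|k)$ of $\mathbb{C}[M_{n,n}]$ back into $\mathbb{C}[M_{n,n}]$, so $\mathbb{C}[M_{n,n}]$ is stable under $\varrho\big(\mathbf{U}(gl(0|n))\big)$, and the restriction $\varrho|_{\mathbf{U}(gl(0|n))}$ coincides with the ordinary left polarization representation $\mathbf{U}(gl(n)) \to End_{\mathbb{C}}\big[\mathbb{C}[M_{n,n}]\big]$ of Section 3.

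The technical heart of the proof is then to show that this left polarization representation is \emph{faithful}. By the Cauchy decomposition,
$$
\mathbb{C}[M_{n,n}] \;\cong\; \bigoplus_{\lambda} V_\lambda \otimes V_\lambda
$$
as a $GL(n) \times GL(n)$-bimodule with $\lambda$ ranging over partitions with at most $n$ parts, every polynomial irreducible $V_\lambda$ of $gl(n)$ appears as a left $gl(n)$-summand of $\mathbb{C}[M_{n,n}]$ with positive multiplicity. The common annihilator in $\mathbf{U}(gl(n))$ of the family $\{V_\lambda\}_{\lambda}$ is zero: split $\mathbf{U}(gl(n)) \cong \mathbf{U}(sl(n)) \otimes \mathbb{C}\left[\sum_i e_{ii}\right]$ via the centre, invoke the classical fact that the annihilator in $\mathbf{U}(sl(n))$ of the family of all finite-dimensional irreducibles vanishes, and observe that the central element $\sum_i e_{ii}$ takes infinitely many distinct eigenvalues (namely $|\lambda|$) on the $V_\lambda$.

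Combining the two halves, $\varrho(u) = 0$ together with the injectivity of $\varrho|_{\mathbf{U}(gl(0|n))}$ force $u = 0$, so $\mathbf{U}(gl(0|n)) + \mathbf{Irr}$ is a direct sum. The main obstacle is the faithfulness step: a direct PBW comparison between irregular expressions and proper monomials would be delicate, because super-commutator manipulations produce lower-degree corrections and irregularity is a ``right-subword'' property not preserved term-by-term under PBW reordering. Routing through the polarization action circumvents this difficulty by appealing to the abundance of polynomial $GL(n)$-irreducibles realised inside $\mathbb{C}[M_{n,n}]$.
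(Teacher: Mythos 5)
Your argument is correct, and it is the intended one: the paper itself states this proposition without proof (deferring to the cited references), but the immediately preceding proposition — that every element of $\mathbf{Irr}$ acts as zero on $\mathbb{C}[M_{n,n}]$ — is clearly set up precisely so that the direct-sum claim reduces to the triviality of $\mathbf{U}(gl(0|n)) \cap \mathbf{Irr}$, which in turn reduces to the faithfulness of the left polarization representation of $\mathbf{U}(gl(n))$ on $\mathbb{C}[M_{n,n}]$. You correctly identify that faithfulness as the real content, and your proof of it is sound: the Cauchy decomposition shows every polynomial irreducible $V_\lambda$, $\ell(\lambda)\le n$, occurs in $\mathbb{C}[M_{n,n}]$; the splitting $\mathbf{U}(gl(n))\cong \mathbf{U}(sl(n))\otimes\mathbb{C}\bigl[\sum_i e_{ii}\bigr]$ together with the fact that each $sl(n)$-irreducible is realized by infinitely many $\lambda$ with distinct values of $|\lambda|$ (add columns of length $n$) lets a Vandermonde argument reduce to the classical residual finite-dimensionality of $\mathbf{U}(sl(n))$ (Harish-Chandra). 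Two minor remarks: first, make explicit that the restriction really is the representation of Section 3, i.e.\ that $\varrho$ restricted to $\mathbf{U}(gl(0|n))$ preserves $\mathbb{C}[M_{n,n}]$ and acts there by the ordinary (even) polarizations $D^{\textit{l}}_{ij}$ — you do note this, and it is immediate from $|(i|j)|=0$ for $i,j\in L$. Second, a more self-contained route to faithfulness, closer in spirit to the machinery of this paper, is via the nondegenerate triangular action of standard (Young-)Capelli bitableaux on the standard basis of $\mathbb{C}[M_{n,n}]$ (cf.\ Proposition \ref{action}); your representation-theoretic argument buys independence from that combinatorial apparatus at the cost of invoking Harish-Chandra's separation theorem.
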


\begin{proposition}\emph{(\cite{Brini3-BR}, \cite{BriUMI-BR})}
The direct sum vector subspace $\mathbf{U}(gl(0|n)) \oplus \mathbf{Irr}$ is a \emph{subalgebra} 
of $\mathbf{U}(gl(m_0|m_1+n)).$
\end{proposition}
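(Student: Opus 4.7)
The plan is to verify closure under multiplication by expanding $\bigl( \mathbf{U}(gl(0|n)) + \mathbf{Irr} \bigr)^2$ into four bilinear pieces. Three of them are immediate: $\mathbf{U}(gl(0|n)) \cdot \mathbf{U}(gl(0|n)) \subseteq \mathbf{U}(gl(0|n))$ since $\mathbf{U}(gl(n))$ is a subalgebra of $\mathbf{U}(gl(m_0|m_1+n))$; $\mathbf{U}(gl(0|n)) \cdot \mathbf{Irr} \subseteq \mathbf{Irr}$ because $\mathbf{Irr}$ is a left ideal of $\mathbf{U}(gl(m_0|m_1+n))$; and likewise $\mathbf{Irr} \cdot \mathbf{Irr} \subseteq \mathbf{U}(gl(m_0|m_1+n)) \cdot \mathbf{Irr} \subseteq \mathbf{Irr}$. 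The only non-trivial piece is $\mathbf{Irr} \cdot \mathbf{U}(gl(0|n))$, and I aim to establish the sharper inclusion $\mathbf{Irr} \cdot \mathbf{U}(gl(0|n)) \subseteq \mathbf{Irr}$.

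By bilinearity it is enough to treat $w \cdot m$, where $w = e_{a_p b_p} \cdots e_{a_1 b_1}$ is an irregular monomial in $\mathbf{U}(gl(m_0|m_1+n))$ and $m = e_{c_k d_k} \cdots e_{c_1 d_1}$ is a monomial in $\mathbf{U}(gl(n))$ (so $c_l, d_l \in L$). Indeed, a general $r \in \mathbf{Irr}$ is a linear combination $r = \sum_i u_i w_i$ with $u_i \in \mathbf{U}(gl(m_0|m_1+n))$ and $w_i$ irregular expressions, so once each $w_i m$ is known to be irregular, the left ideal property delivers $u_i (w_i m) \in \mathbf{Irr}$, and summing gives $rm \in \mathbf{Irr}$.

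The central claim is therefore that $w \cdot m$ is itself an irregular expression. Let $(i_0, \gamma)$ be a witness for $w$: $i_0 \leq p$, $\gamma \in A_0 \cup A_1$, and $\#\{ j : b_j = \gamma,\ j \leq i_0 \} > \#\{ j : a_j = \gamma,\ j < i_0 \}$. In the product $wm = e_{a_p b_p} \cdots e_{a_1 b_1} e_{c_k d_k} \cdots e_{c_1 d_1}$, I consider the right subword formed by the rightmost $k + i_0$ factors, i.e.\ all of $m$ followed by the original witness subword of $w$. Since $c_l, d_l \in L$ while $\gamma \in A_0 \cup A_1$, the appended letters do not match $\gamma$ and contribute nothing to either count. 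The witnessing inequality for $w$ therefore transports verbatim to $wm$ with witness index $k + i_0$ and the same virtual symbol $\gamma$, proving that $wm$ is irregular.

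I do not foresee any real obstacle: the argument is combinatorial bookkeeping, and the conceptual content is simply that right-multiplication by an element of $\mathbf{U}(gl(n))$ can only append \emph{proper} symbols from $L$, which cannot disturb the creation-versus-annihilation balance of a virtual symbol $\gamma$ already imbalanced inside $w$. Combined with the directness of the sum established in the preceding proposition, the four-piece decomposition of the first paragraph then yields the subalgebra property at once.
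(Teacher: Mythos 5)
Your argument is correct. The paper gives no proof of this proposition (it defers to the cited references), but your four-piece decomposition --- with the key observation that right-multiplying an irregular monomial by a monomial in the proper symbols of $L$ appends only letters that cannot match the virtual symbol $\gamma$, so the witnessing imbalance transports verbatim --- is exactly the ``from the definitions'' argument intended, and your sharper inclusion $\mathbf{Irr} \cdot \mathbf{U}(gl(0|n)) \subseteq \mathbf{Irr}$ also delivers the paper's subsequent proposition that $\mathbf{Irr}$ is a two-sided ideal of $Virt(m_0+m_1,n)$.
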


The subalgebra
$$
Virt(m_0+m_1,n) = \mathbf{U}(gl(0|n)) \oplus \mathbf{Irr} \subset {\mathbf{U}}(gl(m_0|m_1+n)).
$$
is called the {\textit{virtual algebra}}.  

The proof of the following proposition is immediate from the definitions.
\begin{proposition}
The left ideal  $\mathbf{Irr}$ of ${\mathbf{U}}(gl(m_0|m_1+n))$
is a two sided ideal of $Virt(m_0+m_1,n).$
\end{proposition}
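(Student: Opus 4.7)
The plan is to establish right absorption: I will show that $x \cdot v \in \mathbf{Irr}$ for every $x \in \mathbf{Irr}$ and every $v \in Virt(m_0+m_1,n)$. Left absorption comes for free, since $\mathbf{Irr}$ is already a left ideal of the ambient algebra $\mathbf{U}(gl(m_0|m_1+n))$, which contains $Virt(m_0+m_1,n)$. Using the direct sum decomposition $Virt(m_0+m_1,n) = \mathbf{U}(gl(0|n)) \oplus \mathbf{Irr}$, I would split $v = u + w$ with $u \in \mathbf{U}(gl(0|n))$ and $w \in \mathbf{Irr}$, and treat the two summands separately.

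For $w$, the result is immediate: $\mathbf{Irr}$ is a left ideal of the full enveloping algebra and $x \in \mathbf{U}(gl(m_0|m_1+n))$, so $xw \in \mathbf{Irr}$. The entire problem therefore reduces to checking that $xu \in \mathbf{Irr}$. I would next write $x$ as a finite sum $\sum_k \alpha_k E_k$ with $\alpha_k \in \mathbf{U}(gl(m_0|m_1+n))$ and each $E_k$ an irregular expression, and invoke the left-ideal property once more to reduce to the following claim: for every irregular expression $E$ and every $u \in \mathbf{U}(gl(0|n))$, the product $E \cdot u$ lies in $\mathbf{Irr}$.

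To establish this claim, I would expand $u$ as a $\mathbb{C}$-linear combination of monomials in the proper generators $\{e_{cd};\ c,d \in L\}$ (using, e.g., the PBW theorem inside $\mathbf{U}(gl(0|n))$), and argue monomial by monomial. It then suffices to verify that the concatenated word $E \cdot U$ formed by prefixing an irregular word $E = e_{a_m b_m} \cdots e_{a_1 b_1}$ to a purely proper monomial $U = e_{c_k d_k} \cdots e_{c_1 d_1}$ is itself an irregular expression. The key observation is that no letter of $U$ involves a virtual symbol; therefore, for any right subword of $E \cdot U$ of length $k+i$ (with $1 \leq i \leq m$) and any virtual $\gamma \in A_0 \cup A_1$, the two counts in condition (\ref{irrexpr-BR}) collapse to $\#\{j \leq i;\ b_j = \gamma\}$ and $\#\{j < i;\ a_j = \gamma\}$, matching exactly the counts for the length-$i$ right subword of $E$. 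Consequently, any witness $(i, \gamma)$ of irregularity for $E$ produces the witness $(k+i, \gamma)$ of irregularity for $E \cdot U$.

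I do not anticipate a serious obstacle: the argument is really just a careful bookkeeping of the irregularity condition under right multiplication, combined with the fact that right multiplication by elements of $\mathbf{U}(gl(0|n))$ cannot introduce or cancel virtual symbols in a word. The mild subtlety worth flagging is that $u$ is an element of a universal enveloping algebra rather than a specific word, so one must first fix a word representative (for example, a PBW expansion) for $u$ before forming the word-level concatenation; this is harmless, since the element $E \cdot u \in \mathbf{U}(gl(m_0|m_1+n))$ is independent of the chosen representative.
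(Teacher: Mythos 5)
Your proof is correct, and it is essentially the argument the paper has in mind: the paper offers no written proof, stating only that the result is ``immediate from the definitions,'' and your reduction to the word-level observation that prefixing an irregular word to a purely proper monomial preserves the witness $(i,\gamma)$ of condition (\ref{irrexpr-BR}) is exactly the bookkeeping that makes that claim precise. The splitting $v=u+w$ via the direct sum, the use of the left-ideal property to absorb the $\alpha_k$ and the $w$-part, and the PBW expansion of $u$ into proper monomials are all sound.
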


The {\textit{Capelli devirtualization epimorphism}} is the surjection
$$
\mathfrak{p} : Virt(m_0+m_1,n) = \mathbf{U}(gl(0|n)) \oplus \mathbf{Irr} \twoheadrightarrow \mathbf{U}(gl(0|n)) = \mathbf{U}(gl(n))
$$
with $Ker(\mathfrak{p}) = \mathbf{Irr}.$

Any element in $\textbf{M} \in Virt(m_0+m_1,n)$ defines an element in
$\textbf{m} \in \mathbf{U}(gl(n))$ - via the map $\mathfrak{p}$ -
 and $\textbf{M}$ is called a \textit{virtual
presentation} of $\textbf{m}$.

Since the map $\mathfrak{p}$  a surjection, any element
$\mathbf{m} \in \mathbf{U}(gl(n))$ admits several virtual
presentations. In the sequel, we even take virtual presentations
as the \emph{true definition} of special elements in $\mathbf{U}(gl(n)),$ 
and this method will turn out to be quite effective.

Recall that ${\mathbf{U}}(gl(m_0|m_1+n))$ is a Lie module with respect 
to the adjiont representation $Ad_{gl(m_0|m_1+n)}$. Since $gl(n) = gl(0|n)$ 
is a Lie subalgebra of $gl(m_0|m_1+n)$, ${\mathbf{U}}(gl(m_0|m_1+n))$ is a $gl(n)-$module 
with respect to the adjoint action $Ad_{gl(n)}$ of $gl(n)$.

The following results follow from the definitions.

\begin{proposition} The virtual algebra $Virt(m_0+m_1,n)$ is a submodule 
of ${\mathbf{U}}(gl(m_0|m_1+n))$ with respect to the adjoint action $Ad_{gl(n)}$ of $gl(n)$.
\end{proposition}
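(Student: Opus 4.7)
The plan is to verify stability directly on the direct-sum decomposition $Virt(m_0+m_1,n) = \mathbf{U}(gl(0|n)) \oplus \mathbf{Irr}$, exploiting that the $Ad_{gl(n)}$-action is generated by the derivations $T_{ij}(\mathbf{M}) = e_{ij}\mathbf{M} - \mathbf{M}e_{ij}$ for $i,j \in L$. Since the sum is direct, it suffices to show separately that each of the two summands is $T_{ij}$-stable.

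First I would dispose of the summand $\mathbf{U}(gl(0|n)) = \mathbf{U}(gl(n))$. Because $gl(n)$ is a Lie subalgebra of $gl(m_0|m_1+n)$, the bracket $[e_{ij},e_{st}] = \delta_{js}e_{it} - \delta_{it}e_{sj}$ stays inside $gl(n)$ for $i,j,s,t \in L$; since $T_{ij}$ is a derivation and $\mathbf{U}(gl(n))$ is generated as an algebra by $gl(n)$, we conclude $T_{ij}(\mathbf{U}(gl(n))) \subseteq \mathbf{U}(gl(n))$.

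Next I would treat the summand $\mathbf{Irr}$. Given $\mathbf{r} \in \mathbf{Irr}$, I expand
$$T_{ij}(\mathbf{r}) = e_{ij}\mathbf{r} - \mathbf{r}\, e_{ij}.$$
The first term lies in $\mathbf{Irr}$ because $\mathbf{Irr}$ is by definition a \emph{left} ideal of $\mathbf{U}(gl(m_0|m_1+n))$. For the second term I would invoke the earlier proposition asserting that $\mathbf{Irr}$ is a \emph{two-sided} ideal of $Virt(m_0+m_1,n)$: since $e_{ij} \in gl(n) \subset \mathbf{U}(gl(0|n)) \subseteq Virt(m_0+m_1,n)$, right multiplication by $e_{ij}$ preserves $\mathbf{Irr}$. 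Hence $T_{ij}(\mathbf{r}) \in \mathbf{Irr}$.

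Combining the two steps, each generator $T_{ij}$ of $Ad_{gl(n)}$ sends both summands back into themselves, so $Virt(m_0+m_1,n)$ is an $Ad_{gl(n)}$-submodule. There is no genuine obstacle once the preceding structural results are in hand; the whole content of the proposition is the observation that $e_{ij}$ (for $i,j \in L$) sits inside $Virt(m_0+m_1,n)$, which lets one convert the commutator computation in the ambient enveloping superalgebra into an ideal-containment assertion inside the virtual algebra.
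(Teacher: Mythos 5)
Your proof is correct; the paper gives no argument for this proposition (it is dismissed as following from the definitions), and your summand-wise verification supplies exactly the intended justification, with each step (left-ideal property of $\mathbf{Irr}$ in the ambient algebra for $e_{ij}\mathbf{r}$, two-sided ideal property in $Virt(m_0+m_1,n)$ together with $e_{ij}\in gl(0|n)\subset Virt(m_0+m_1,n)$ for $\mathbf{r}\,e_{ij}$) correctly cited from results already established at that point in the text. Note that an even shorter route exists --- $Virt(m_0+m_1,n)$ is a subalgebra containing $e_{ij}$, so $T_{ij}(\mathbf{M}) = e_{ij}\mathbf{M} - \mathbf{M}e_{ij}$ stays inside it --- but your finer statement that \emph{each summand separately} is $T_{ij}$-stable is the stronger fact implicitly needed for the subsequent proposition that the projection $\mathfrak{p}$ (whose kernel is $\mathbf{Irr}$) is $Ad_{gl(n)}$-equivariant.
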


\begin{proposition}\label{rappresentazione aggiunta-BR} The Capelli  epimorphism 
$$
\mathfrak{p} : Virt(m_0+m_1,n)  \twoheadrightarrow \mathbf{U}(gl(n))
$$ is an $Ad_{gl(n)}-$\emph{equivariant} map.
\end{proposition}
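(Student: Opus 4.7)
The plan is to exploit the direct-sum decomposition $Virt(m_0+m_1,n) = \mathbf{U}(gl(n)) \oplus \mathbf{Irr}$ together with the two-sided ideal property of $\mathbf{Irr}$ inside $Virt(m_0+m_1,n)$ established in the previous proposition. Since $\mathfrak{p}$ is, by construction, the projection onto the first summand, $Ad_{gl(n)}$-equivariance of $\mathfrak{p}$ amounts to showing that the $Ad_{gl(n)}$-action preserves \emph{each} of the two summands separately.

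First I would fix $e_{ij} \in gl(n) \subset gl(m_0|m_1+n)$ and the corresponding operator $T_{ij}(\mathbf{M}) = e_{ij}\mathbf{M} - \mathbf{M} e_{ij}$ on $\mathbf{U}(gl(m_0|m_1+n))$. By the preceding proposition, $Virt(m_0+m_1,n)$ is stable under $T_{ij}$, so for $\mathbf{M} = \mathbf{u} + \mathbf{v}$ with $\mathbf{u} \in \mathbf{U}(gl(n))$ and $\mathbf{v} \in \mathbf{Irr}$, the element $T_{ij}(\mathbf{M}) = T_{ij}(\mathbf{u}) + T_{ij}(\mathbf{v})$ again lies in $Virt(m_0+m_1,n)$.

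Next I would observe that the two summands are individually preserved. For the first: $T_{ij}(\mathbf{u}) \in \mathbf{U}(gl(n))$ because $T_{ij}$ restricted to $\mathbf{U}(gl(n))$ is the adjoint action of $e_{ij}$ on the subalgebra $\mathbf{U}(gl(n))$, which is an $Ad_{gl(n)}$-submodule of itself (equivalently, $gl(n)$ is closed under the Lie bracket, so the derivation $T_{ij}$ maps generators of $\mathbf{U}(gl(n))$ to $gl(n) \subset \mathbf{U}(gl(n))$, and then extends to the whole subalgebra). For the second: $e_{ij}\mathbf{v} \in \mathbf{Irr}$ because $\mathbf{Irr}$ is a \emph{left} ideal of $\mathbf{U}(gl(m_0|m_1+n))$, and $\mathbf{v}\, e_{ij} \in \mathbf{Irr}$ because, by the previous proposition, $\mathbf{Irr}$ is a \emph{two-sided} ideal of $Virt(m_0+m_1,n)$ and $e_{ij} \in \mathbf{U}(gl(n)) \subset Virt(m_0+m_1,n)$. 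Hence $T_{ij}(\mathbf{v}) = e_{ij}\mathbf{v} - \mathbf{v} e_{ij} \in \mathbf{Irr} = \ker(\mathfrak{p})$.

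Combining these two facts,
\[
\mathfrak{p}\bigl(T_{ij}(\mathbf{M})\bigr) = \mathfrak{p}\bigl(T_{ij}(\mathbf{u})\bigr) + \mathfrak{p}\bigl(T_{ij}(\mathbf{v})\bigr) = T_{ij}(\mathbf{u}) + 0 = T_{ij}\bigl(\mathfrak{p}(\mathbf{M})\bigr),
\]
which is the desired equivariance. The only non-routine point is the passage $\mathbf{v}\, e_{ij} \in \mathbf{Irr}$, which is precisely what the two-sided ideal property supplies; once that is in hand, the argument is a one-line verification. Extending from generators $e_{ij}$ to arbitrary elements of $gl(n)$ is $\mathbb{C}$-linear and immediate.
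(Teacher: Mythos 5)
Your proof is correct and is essentially the argument the paper has in mind: the paper itself offers no details (it states that the result ``follows from the definitions''), and the natural verification is exactly yours --- $T_{ij}$ preserves the summand $\mathbf{U}(gl(n))$ because it is a derivation sending the generators $e_{st}$, $s,t\in L$, into $gl(n)$, and it preserves $\mathbf{Irr}$ because $\mathbf{Irr}$ is a left ideal of $\mathbf{U}(gl(m_0|m_1+n))$ and a two-sided ideal of $Virt(m_0+m_1,n)$, so that $\mathfrak{p}$ commutes with $T_{ij}$. You correctly isolate the one non-routine step (stability of $\mathbf{Irr}$ under right multiplication by $e_{ij}$) and attribute it to the two-sided ideal property, which is precisely why the paper establishes that property immediately beforehand.
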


\begin{corollary} The isomorphism $\mathfrak{p}$ maps
any  $Ad_{gl(n)}-$invariant element $\mathbf{m} \in Virt(m_0+m_1,n)$  to a \emph{central} 
element of $\mathbf{U}(gl(n))$.
\end{corollary}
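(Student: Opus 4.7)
The plan is to combine the equivariance statement of Proposition \ref{rappresentazione aggiunta-BR} with the standard identification between the center $\boldsymbol{\zeta}(n)$ of $\mathbf{U}(gl(n))$ and the subalgebra $\mathbf{U}(gl(n))^{Ad_{gl(n)}}$ of $Ad_{gl(n)}$-invariants, which was already invoked in the abstract and the introduction.

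First, I would fix an $Ad_{gl(n)}$-invariant element $\mathbf{m} \in Virt(m_0+m_1,n)$, so that $T_{ij}(\mathbf{m}) = [e_{ij},\mathbf{m}] = 0$ inside $\mathbf{U}(gl(m_0|m_1+n))$ for every $i,j = 1,\ldots,n$. By Proposition \ref{rappresentazione aggiunta-BR}, the Capelli devirtualization epimorphism $\mathfrak{p}$ is $Ad_{gl(n)}$-equivariant; applying it to the identity $T_{ij}(\mathbf{m}) = 0$ yields $T_{ij}\bigl(\mathfrak{p}(\mathbf{m})\bigr) = 0$ for every $i,j$, where now $T_{ij}$ denotes the corresponding derivation of $\mathbf{U}(gl(n))$.

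Next, I would unfold the definition of $T_{ij}$ recorded earlier in the paper, namely $T_{ij}(\mathbf{x}) = e_{ij}\mathbf{x} - \mathbf{x}e_{ij}$ for $\mathbf{x} \in \mathbf{U}(gl(n))$. The vanishing $T_{ij}\bigl(\mathfrak{p}(\mathbf{m})\bigr) = 0$ therefore means that $\mathfrak{p}(\mathbf{m})$ commutes with every standard generator $e_{ij}$ of $\mathbf{U}(gl(n))$. Since the $e_{ij}$ generate $\mathbf{U}(gl(n))$ as an associative algebra, $\mathfrak{p}(\mathbf{m})$ commutes with the whole of $\mathbf{U}(gl(n))$, and hence belongs to its center $\boldsymbol{\zeta}(n)$.

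No serious obstacle is present: the argument is essentially a one-line corollary of equivariance together with the basic fact that centrality in an associative algebra is detected on a generating set. The only small point worth flagging is the typo in the statement (the map $\mathfrak{p}$ is an epimorphism, not an isomorphism); this does not affect the proof, since only equivariance of $\mathfrak{p}$ is used.
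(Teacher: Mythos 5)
Your argument is correct and is precisely the intended one: the paper states this corollary without proof as an immediate consequence of the $Ad_{gl(n)}$-equivariance of $\mathfrak{p}$ together with the identification $\boldsymbol{\zeta}(n) = \mathbf{U}(gl(n))^{Ad_{gl(n)}}$, which is exactly what you spell out. Your observation that ``isomorphism'' should read ``epimorphism'' in the statement is also well taken and does not affect the argument.
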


\textit{Balanced monomials} are  elements of the algebra ${\mathbf{U}}(gl(m_0|m_1+n))$
 of the form:
\begin{itemize}\label{defbalanced monomials-BR}
\item [--] $e_{{i_1},\gamma_{p_1}} \cdots e_{{i_k},\gamma_{p_k}} \cdot
e_{\gamma_{p_1},{j_1}} \cdots e_{\gamma_{p_k},{j_k}},$
\item [--]
$e_{{i_1},\theta_{q_1}} \cdots e_{{i_k},\theta_{q_k}} \cdot
e_{\theta_{q_1},\gamma_{p_1}} \cdots e_{\theta_{q_k},\gamma_{p_k}} \cdot
e_{\gamma_{p_1},{j_1}} \cdots e_{\gamma_{p_k},{j_k}},$
\item [--] and so on,
\end{itemize}
where
$i_1, \ldots, i_k, j_1, \ldots, j_k \in L,$
i.e., the $i_1, \ldots, i_k, j_1, \ldots, j_k$ are $k$
proper (negative) symbols, and the
$\gamma_{p_1}, \ldots, \gamma_{p_k}, \ldots, \theta_{q_1}, \ldots, \theta_{q_k}, \ldots$ are
virtual symbols.
In plain words, a balanced monomial is product of two or more factors  where the
rightmost one  \textit{annihilates} (by superpolarization)
the $k$ proper symbols $ j_1, \ldots, j_k$ and
\textit{creates} (by superpolarization) some virtual symbols;
 the leftmost one  \textit{annihilates} all the virtual symbols
and \textit{creates} the $k$ proper symbols $ i_1, \ldots, i_k$;
between these two factors, there might be further factors that annihilate
 and create  virtual symbols only.

\begin{proposition}\emph{(\cite{Brini1-BR}, \cite{Brini2-BR}, \cite{Bri-BR}, \cite{BriUMI-BR})}
Every balanced monomial belongs to $Virt(m_0+m_1,n)$. Hence,
the Capelli epimorphism $\mathfrak{p}$ maps  balanced monomials to elements of $\mathbf{U}(gl(n)).$
\end{proposition}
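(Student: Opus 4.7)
The plan is to induct on the total number $k$ of virtual-symbol factors occurring in the balanced monomial $B$. The base case $k=0$ is immediate: $B$ then consists solely of proper-symbol operators, so $B \in \mathbf{U}(gl(0|n)) \subset Virt(m_0+m_1,n)$.

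For the inductive step, I would exhibit a decomposition $B = P + I$ with $P \in \mathbf{U}(gl(0|n))$ and $I \in \mathbf{Irr}$ by repeated use of the supercommutation identity
\[
e_{a,b}e_{c,d} = (-1)^{(|a|+|b|)(|c|+|d|)} e_{c,d}e_{a,b} + \delta_{bc}e_{a,d} - (-1)^{(|a|+|b|)(|c|+|d|)}\delta_{ad}e_{c,b}.
\]
I would focus on the innermost chain link: by the chain structure of a balanced monomial, the rightmost two factors consist of $k$ virtual-annihilations followed by $k$ virtual-creations, of the same virtual alphabet. Select the leftmost virtual-annihilator in this block, of the form $e_{x,\gamma}$, and move it rightward one factor at a time through the virtual-creation block. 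Each swap yields a passed term plus commutator contributions, nonzero only when $\delta_{bc}$ or $\delta_{ad}$ fires: the first case produces a proper operator $e_{x,d}$ that eliminates one virtual pair; the second introduces a virtual operator $\pm e_{c,\gamma}$ of simpler type.

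Group the resulting terms. Any term in which $e_{x,\gamma}$ has been pushed all the way to the rightmost position of $B$ is explicitly irregular, since the singleton rightmost subword $\{e_{x,\gamma}\}$ annihilates $\gamma$ once with zero prior creations, satisfying the defining inequality; such terms lie in $\mathbf{Irr}$. The contracted terms have strictly fewer virtual factors than $B$, and after checking that the inserted operator $e_{x,d}$ fits cleanly into the residual chain structure (up to further terms of lower virtual count, handled recursively), they lie in $Virt$ by the induction hypothesis. Summing all contributions, $B \in \mathbf{U}(gl(0|n)) \oplus \mathbf{Irr} = Virt(m_0+m_1,n)$. The second assertion of the proposition is then immediate from the definition of $\mathfrak{p}$ as projection onto the $\mathbf{U}(gl(0|n))$ summand along $\mathbf{Irr}$.

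The main obstacle will be the combinatorial bookkeeping for the secondary commutator terms and, for balanced monomials with three or more factors, the propagation of the "irregularity" or "virtual-count reduction" through the intermediate virtual-virtual blocks $e_{\theta,\gamma}$. The key local claim to formalize is that every supercommutator move either strictly decreases the total virtual count, or else produces an expression whose rightmost right-subword exhibits a net excess of $\gamma$-annihilations over $\gamma$-creations for some virtual symbol $\gamma$ — guaranteeing membership in $\mathbf{Irr}$. A careful signed count of creations and annihilations along all right-subwords, directly following the definition of irregular expression, should suffice to settle this.
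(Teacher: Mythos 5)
The paper itself does not prove this proposition; it quotes it from the earlier references, so there is no in-paper argument to compare against. Your strategy is the standard straightening argument used in those references: push a virtual-annihilating factor $e_{x,\gamma}$ to the right through the creation block, observe that any term in which it reaches the rightmost position is an irregular expression (hence lies in the left ideal $\mathbf{Irr}$), and handle the contracted terms by induction. The two anchor points of your argument are correct: a monomial ending in $e_{x,\gamma}$ with $\gamma$ virtual is irregular by taking the length-one right subword in the defining inequality, and the supercommutation identity you quote matches the superbracket of $gl(m_0|m_1+n)$.

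There is, however, one genuine gap, which you flag but do not close: the induction hypothesis is stated only for \emph{balanced monomials}, yet the class of balanced monomials is not closed under your reduction moves. A $\delta_{bc}$-contraction replaces $e_{x,\gamma}e_{\gamma,j_l}$ by $e_{x,j_l}$ sitting in the \emph{middle} of the word, and a $\delta_{ad}$-contraction produces a virtual-virtual factor $e_{\gamma_{p_l},\gamma}$; neither result has the block structure of a balanced monomial, so ``by the induction hypothesis'' does not apply to it as written. The fix is to prove the stronger statement for the class of all monomials that are \emph{content-balanced} in the virtual symbols (each virtual symbol occurs equally often as creator and as annihilator); this class contains the balanced monomials, is closed under swapping and under both contractions, and a content-balanced monomial with no virtual annihilators is proper while one ending in a virtual annihilation is irregular. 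One must also choose the induction measure with care: the $\delta_{ad}$-contraction does \emph{not} decrease the number of virtual-symbol occurrences (it decreases only the number of factors), so a single count does not suffice; a lexicographic induction on the pair (number of virtual occurrences, word length), with the position of the pushed factor governing the terminating inner loop, does. With the class widened and the measure fixed in this way, your argument is complete and is essentially the proof given in the cited sources.
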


Let $S$ and $T$ be the Young tableaux
\begin{equation}\label{tableaux}
S = \left(
\begin{array}{llllllllllllll}
i_{p_1}  \ldots    \ldots     \ldots     i_{p_{\lambda_1}} \\
i_{q_1}   \ldots  \ldots               i_{q_{\lambda_2}} \\
 \ldots  \ldots   \\
i_{r_1} \ldots i_{r_{\lambda_m}}
\end{array}
\right),
\quad
T = \left(
\begin{array}{llllllllllllll}
j_{s_1}  \ldots    \ldots     \ldots     j_{s_{\lambda_1}} \\
j_{t_1}   \ldots  \ldots               j_{t_{\lambda_2}} \\
 \ldots  \ldots   \\
j_{v_1} \ldots j_{v_{\lambda_m}}
\end{array}
\right).
\end{equation}

To the pair $(S,T)$, we associate the {\it{bitableau monomial}}:
\begin{equation*}\label{BitMon}
e_{S,T} =
e_{i_{p_1}, j_{s_1}}\cdots e_{i_{p_{\lambda_1}}, j_{s_{\lambda_1}}}
e_{i_{q_1}, j_{t_1}}\cdots e_{i_{q_{\lambda_2}}, j_{t_{\lambda_2}}}
 \cdots  \cdots
e_{i_{r_1}, j_{v_1}}\cdots e_{i_{r{\lambda_p}}, j_{v_{\lambda_p}}}
\end{equation*}
in ${\mathbf{U}}(gl(m_0|m_1+n)).$

Let  $\beta_1, \ldots, \beta_{\lambda_1} \in A_1$, $\alpha_1, \ldots, \alpha_p \in A_0$
be sets of  negative and positive  \emph{virtual symbols}, respectively.
Set
\begin{equation*}\label{Deruyts and Coderuyts}
D_{\lambda} = \left(
\begin{array}{llllllllllllll}
\beta_1  \ldots    \ldots     \ldots     \beta_{\lambda_1}     \\
\beta_1   \ldots  \ldots               \beta_{\lambda_2} \\
 \ldots  \ldots   \\
\beta_1 \ldots \beta_{\lambda_p}
\end{array}
\right), \qquad
C_{\lambda} = \left(
\begin{array}{llllllllllllll}
\alpha_1  \ldots    \ldots     \ldots     \alpha_1    \\
\alpha_2   \ldots  \ldots               \alpha_2 \\
 \ldots  \ldots   \\
\alpha_p \ldots \alpha_p
\end{array}
\right)
\end{equation*}.

The tableaux $D_{\lambda}$ and $C_{\lambda}$ are called the {\it{virtual Deruyts and Coderuyts tableaux}}
of shape $\lambda,$ respectively.

Given a pair of Young tableaux  $S, T$ of the same shape $\lambda$ on the proper alphabet $L$,
consider the elements
\begin{equation}\label{determinantal}
e_{S,C_{\lambda}} \ e_{C_{\lambda},T} \in {\mathbf{U}}(gl(m_0|m_1+n)),
\end{equation}
\begin{equation}\label{permanental}
e_{S,\widetilde{D_{\widetilde{\lambda}}}} \ e_{\widetilde{D_{\widetilde{\lambda}}},T} \in {\mathbf{U}}(gl(m_0|m_1+n)),
\end{equation}
\begin{equation}\label{rightYoungCapelli}
e_{S,C_{\lambda}} \ e_{C_{\lambda},D_{\lambda}}  \   e_{D_{\lambda},T} \in {\mathbf{U}}(gl(m_0|m_1+n)).
\end{equation}

Since elements (\ref{determinantal}), (\ref{permanental}) and (\ref{rightYoungCapelli}) are balanced monomials in
${\mathbf{U}}(gl(m_0|m_1+n))$,  they belong to the subalgebra $Virt(m_0+m_1,n)$.

We set
$$
\mathfrak{p} \Big( e_{S,C_{\lambda}} \ e_{C_{\lambda},T}    \Big) =  [S|T]  \in {\mathbf{U}}(gl(n)),
$$
and call the element $[S|T]$ a {\textit{Capelli bitableau}} \cite{Brini3-BR}, \cite{Brini4-BR}.

We set
$$
\mathfrak{p} \Big( e_{S,\widetilde{D_{\widetilde{\lambda}}}} \ e_{\widetilde{D_{\widetilde{\lambda}}},T}    \Big) =  [S|T]^*  \in {\mathbf{U}}(gl(n)),
$$
and call the element $[S|T]^*$ a {\textit{Capelli *-bitableau}} \cite{Brini3-BR}, \cite{Brini4-BR}.

We set
$$
\mathfrak{p} \Big( e_{S,C_{\lambda}} \ e_{C_{\lambda},D_{\lambda}}  \   e_{D_{\lambda},T} \Big)
=  [S| \fbox{$T$}]           \in {\mathbf{U}}(gl(n)).
$$
and call the element $[S| \fbox{$T$}] $ a {\textit{right Young-Capelli bitableau}} \cite{Brini2-BR}.

\section{The \textit{bitableaux correspondence} maps $\mathcal{B}$ and $\mathcal{B}^*$
and the Koszul map $\mathcal{K}$}\label{bbk maps}

\begin{theorem}\label{operator B}
The \emph{bitableaux correspondence} map
$$
\mathcal{B} : (S|T) \mapsto [S|T]
$$
uniquely extends to a linear map
$$
\mathcal{B} : {\mathbb C}[M_{n,n}] \cong \mathbf{Sym}(gl(n)) \rightarrow \mathbf{U}(gl(n)).
$$
\end{theorem}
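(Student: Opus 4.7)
The plan is to combine the basis theorem (Proposition \ref{bases thms}) for standard bitableaux with a straightening argument transported through the virtual algebra. Since the standard determinantal bitableaux
\[
\bigl\{(S|T) : sh(S)=sh(T)=\lambda,\ \lambda_1\le n,\ S,T \text{ standard}\bigr\}
\]
form a $\mathbb{C}$-basis of $\mathbb{C}[M_{n,n}]$, the assignment $(S|T)\mapsto [S|T]$ on standard pairs extends, uniquely, to a linear map $\mathcal{B}:\mathbb{C}[M_{n,n}]\to \mathbf{U}(gl(n))$. The substance of the theorem is that this extension continues to send an \emph{arbitrary} (not necessarily standard) bitableau $(S|T)$ to the Capelli bitableau $[S|T]$.

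Equivalently, one must verify that every linear dependence $(S|T)=\sum_i c_i(S_i|T_i)$ among determinantal bitableaux in $\mathbb{C}[M_{n,n}]$ --- in particular, each relation produced by the classical straightening algorithm --- lifts to the identity $[S|T]=\sum_i c_i\,[S_i|T_i]$ in $\mathbf{U}(gl(n))$ with the \emph{same} coefficients. The route I would take is to work with the virtual presentation $[S|T]=\mathfrak{p}\bigl(e_{S,C_\lambda}\,e_{C_\lambda,T}\bigr)$ in $Virt(m_0+m_1,n)$, establish the analogue of each such linear dependence for the balanced monomials $e_{S,C_\lambda}\,e_{C_\lambda,T}$ modulo the two-sided ideal $\mathbf{Irr}$, and then push the identity across via the Capelli devirtualization epimorphism $\mathfrak{p}$. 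The combinatorial engine is the observation that in $\mathbb{C}[M_{m_0|m_1+n,n}]$ the positive virtual variables $(\alpha|j)$ with $\alpha\in A_0$ anticommute, so products of the left superpolarizations $D^{\textit{l}}_{i,\alpha}$ and $D^{\textit{l}}_{\alpha,j}$ mirror, termwise, the skew-symmetry of the biproducts $(\omega|\varpi)$; Laplace expansions and Pl\"ucker-type exchange identities for determinantal bitableaux then convert into congruences among balanced monomials modulo $\mathbf{Irr}$.

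The main obstacle is precisely this lifting step: turning each classical straightening syzygy among determinantal bitableaux into a congruence of balanced monomials modulo $\mathbf{Irr}$. Once the elementary row-exchange identities and Laplace expansions have been verified in the virtual algebra, the straightening algorithm propagates verbatim through $\mathcal{B}$, and the well-definedness (hence the uniqueness of the linear extension) follows.
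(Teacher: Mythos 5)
Your proposal is correct and follows essentially the same route as the paper: both arguments rest on the standard basis theorem for determinantal bitableaux together with the fact that Capelli bitableaux satisfy the \emph{same} straightening relations in $\mathbf{U}(gl(n))$, so that the straightening expansion into standard bitableaux transports with identical coefficients. The only difference is that the paper simply cites this last fact (\cite{Brini3-BR}, Proposition 7), whereas you sketch its proof by lifting the exchange identities to congruences of balanced monomials $e_{S,C_\lambda}e_{C_\lambda,T}$ modulo $\mathbf{Irr}$ and applying $\mathfrak{p}$ --- which is exactly how the cited result is established.
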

\begin{proof} We recall that bitableaux  and Capelli bitableaux
satisfy the \textbf{same} (determinantal)  \emph{straightening laws} 
in ${\mathbb C}[M_{n,n}]$ and  $\mathbf{U}(gl(n))$, respectively 
(\cite{Brini3-BR}, Proposition $7$). The  straightening laws
imply that standard  (determinantal) bitableaux span ${\mathbb C}[M_{n,n}]$
(see, e.g. \cite{drs-BR}, \cite{DEP-BR}, \cite{DKR-BR}); furhermore, 
standard bitableaux are linearly independent. Then, the map $\mathcal{B}$ is a uniquely
defined linear operator.
\end{proof}

\begin{theorem}\label{operator B^*}
The \emph{*-bitableaux correspondence} map
$$
\mathcal{B}^* : (S|T)^* \mapsto [S|T]^*
$$
uniquely extends to a linear map
$$
\mathcal{B}^* : {\mathbb C}[M_{n,n}]  \cong \mathbf{Sym}(gl(n)) \rightarrow \mathbf{U}(gl(n)).
$$
\end{theorem}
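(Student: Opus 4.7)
The plan is to mirror the proof of Theorem \ref{operator B} almost verbatim, with the determinantal straightening laws replaced by their permanental counterparts and the standard basis replaced by the costandard basis. Two ingredients are required: (i) that *-bitableaux $(S|T)^*$ in ${\mathbb C}[M_{n,n}]$ and Capelli *-bitableaux $[S|T]^*$ in $\mathbf{U}(gl(n))$ obey the \emph{same} permanental straightening laws, and (ii) that costandard *-bitableaux form a linear basis of ${\mathbb C}[M_{n,n}]$.

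Step (ii) is immediate from Proposition \ref{bases thms}. Step (i) would be handled via the superalgebraic method of Section \ref{sec 3}. By construction, $[S|T]^* = \mathfrak{p}\bigl(e_{S,\widetilde{D_{\widetilde{\lambda}}}} \ e_{\widetilde{D_{\widetilde{\lambda}}},T}\bigr)$ is the Capelli image of a virtual balanced monomial built from the transposed Deruyts tableau $\widetilde{D_{\widetilde{\lambda}}}$ on \emph{negative} virtual symbols; since the corresponding auxiliary variables $(\beta_t|j)$ are of $\mathbb{Z}_2$-degree zero and so super-commute symmetrically, the polynomial image of the virtual bitableau is governed by permanents rather than by determinants. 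The identities that underlie the permanental straightening of *-bitableaux in ${\mathbb C}[M_{n,n}]$ then lift, in a natural way, to identities among the corresponding balanced monomials in the virtual algebra $Virt(m_0+m_1,n)$, and descend through $\mathfrak{p}$ to matching relations among the $[S|T]^*$ in $\mathbf{U}(gl(n))$. Granted (i) and (ii), I would define $\mathcal{B}^*$ on the costandard basis by $(U|V)^* \mapsto [U|V]^*$, extend by linearity, and verify that the formula $(S|T)^* \mapsto [S|T]^*$ holds on every *-bitableau: expanding both sides into the costandard basis yields the same coefficients, thanks to (i).

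The hard part will be step (i): certifying that the permanental straightening laws transport cleanly through the devirtualization epimorphism $\mathfrak{p}$. The argument is formally parallel to the determinantal case invoked in the proof of Theorem \ref{operator B} (cf.\ \cite{Brini3-BR}, Proposition 7), but one must keep careful track of the $\mathbb{Z}_2$-grading and of the signs inherited from the super-commutation rules for the negative virtual symbols. It is precisely at this point that the use of negative (rather than positive) virtual symbols in the definition of $[S|T]^*$ becomes essential, and that the \emph{costandard} shape condition naturally supplants the standard one.
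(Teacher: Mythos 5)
Your proposal is correct and follows essentially the same route as the paper: the paper's proof likewise transfers the argument for $\mathcal{B}$ verbatim, replacing determinantal straightening laws by permanental ones and the standard basis by the costandard basis, with the common straightening relations for $(S|T)^*$ and $[S|T]^*$ obtained as a special case of the superalgebraic straightening/standard-basis theorems cited in Section \ref{sec 3}. Your step (i), which you flag as the hard part, is exactly the ingredient the paper delegates to those references rather than proving in situ.
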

\begin{proof} The proof is essentially the same as the proof of Theorem \ref{operator B},
just by replacing the determinantal straightening laws with the permanental straightening laws, 
and standard  (determinantal) bitableaux with costandard  (permanental) bitableaux.
Notice that both arguments are special cases of the superalgebraic version of the straightening laws 
and of the standard basis theorem (\cite{rota-BR}, \cite{Bri-BR}).
\end{proof}

Given $i, j = 1, 2, \ldots, n$, let
$$
\rho_{ij} : {\mathbb C}[M_{n,n}] \rightarrow {\mathbb C}[M_{n,n}]
$$
be the linear operator
$$
\rho_{ij}(\mathbf{p}) = D^{\textit{l}}_{ij} (\mathbf{p}) + (i|j) \cdot \mathbf{p},
\quad for \ every \ \mathbf{p} \in {\mathbb C}[M_{n,n}].
$$

\begin{proposition}\label{main identity} We have:
$$
[\rho_{ij},\rho_{hk}] = \rho_{ij}  \rho_{hk} - \rho_{hk}  \rho_{ij} =
\delta_{jh} \rho_{ik} - \delta_{ik} \rho_{hj}.
$$
\end{proposition}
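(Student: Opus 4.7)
The plan is to prove the identity by a direct computation, expanding both $\rho_{ij}\rho_{hk}$ and $\rho_{hk}\rho_{ij}$ using the definition $\rho_{ij}(\mathbf{p}) = D^{\textit{l}}_{ij}(\mathbf{p}) + (i|j)\cdot\mathbf{p}$ and then comparing term by term. The key inputs I will need are the Leibniz (derivation) rule for $D^{\textit{l}}_{ij}$, the commutativity of the entries $(i|j)$ in ${\mathbb C}[M_{n,n}]$, and the well-known commutation relation
$$
\big[D^{\textit{l}}_{ij}, D^{\textit{l}}_{hk}\big] \;=\; \delta_{jh}\,D^{\textit{l}}_{ik} \;-\; \delta_{ik}\,D^{\textit{l}}_{hj},
$$
which is just the statement that $\rho^{\textit{l}}$ from Section 3 is a representation of $gl(n)$.

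First I would compute $\rho_{ij}\rho_{hk}(\mathbf{p})$ explicitly. There are four resulting summands: $D^{\textit{l}}_{ij}D^{\textit{l}}_{hk}(\mathbf{p})$, the derivation term $D^{\textit{l}}_{ij}\big((h|k)\mathbf{p}\big)$, the multiplication term $(i|j)\cdot D^{\textit{l}}_{hk}(\mathbf{p})$, and the purely multiplicative term $(i|j)(h|k)\mathbf{p}$. Applying Leibniz to the second summand produces $\delta_{jh}(i|k)\mathbf{p} + (h|k)\,D^{\textit{l}}_{ij}(\mathbf{p})$. The analogous expansion of $\rho_{hk}\rho_{ij}(\mathbf{p})$ yields the symmetric collection with $i,j$ and $h,k$ swapped, in particular a Leibniz contribution $\delta_{ik}(h|j)\mathbf{p} + (i|j)\,D^{\textit{l}}_{hk}(\mathbf{p})$.

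Subtracting the two expressions, I expect the mixed terms $(h|k)D^{\textit{l}}_{ij}(\mathbf{p})$ and $(i|j)D^{\textit{l}}_{hk}(\mathbf{p})$ to appear on both sides and cancel, and the purely multiplicative terms $(i|j)(h|k)\mathbf{p}$ and $(h|k)(i|j)\mathbf{p}$ to cancel because the entries of $M_{n,n}$ commute. What survives is
$$
[\rho_{ij},\rho_{hk}](\mathbf{p}) \;=\; \big[D^{\textit{l}}_{ij},D^{\textit{l}}_{hk}\big](\mathbf{p}) \;+\; \delta_{jh}(i|k)\mathbf{p} \;-\; \delta_{ik}(h|j)\mathbf{p}.
$$
Invoking the commutation relation for left polarizations, this becomes
$$
\delta_{jh}\bigl(D^{\textit{l}}_{ik}(\mathbf{p}) + (i|k)\mathbf{p}\bigr) \;-\; \delta_{ik}\bigl(D^{\textit{l}}_{hj}(\mathbf{p}) + (h|j)\mathbf{p}\bigr) \;=\; \delta_{jh}\,\rho_{ik}(\mathbf{p}) \;-\; \delta_{ik}\,\rho_{hj}(\mathbf{p}),
$$
which is exactly the desired identity.

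There is no substantive obstacle here: the proof is a bookkeeping exercise. The only point that requires a moment's care is observing that the two Kronecker contributions $\delta_{jh}(i|k)\mathbf{p}$ and $-\delta_{ik}(h|j)\mathbf{p}$ produced by Leibniz are exactly the pieces that convert $D^{\textit{l}}_{ik}$ and $D^{\textit{l}}_{hj}$ into $\rho_{ik}$ and $\rho_{hj}$; this is what makes the perturbation of $D^{\textit{l}}_{ij}$ by the multiplication operator $(i|j)$ yield another representation of $gl(n)$ with the same structure constants.
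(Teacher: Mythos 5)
Your computation is correct and is exactly the routine verification the paper has in mind (the paper states Proposition \ref{main identity} with no written proof, treating it as immediate): expanding both products, the Leibniz rule produces the Kronecker terms $\delta_{jh}(i|k)\mathbf{p}$ and $\delta_{ik}(h|j)\mathbf{p}$ that promote $D^{\textit{l}}_{ik}$ and $D^{\textit{l}}_{hj}$ to $\rho_{ik}$ and $\rho_{hj}$, while the cross terms and the commuting multiplication operators cancel. Nothing is missing.
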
\qed

By the universal property of $\mathbf{U}(gl(n))$, Proposition \ref{main identity}
implies

\begin{proposition} The map
$$
e_{ij} \mapsto\rho_{ij}, \quad e_{ij} \in gl(n)
$$
defines an associative algebra morphism
$$
\tau : \mathbf{U}(gl(n)) \rightarrow End_{\mathbb C}[{\mathbb C}[M_{n,n}]].
$$
\end{proposition}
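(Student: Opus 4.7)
The plan is to apply the universal property of the enveloping algebra $\mathbf{U}(gl(n))$: any Lie algebra homomorphism from $gl(n)$ into the commutator Lie algebra of an associative algebra $A$ extends uniquely to an associative algebra homomorphism $\mathbf{U}(gl(n)) \to A$. Here the target algebra is $A = End_{\mathbb C}[{\mathbb C}[M_{n,n}]]$ with its natural commutator bracket.

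First I would extend the assignment $e_{ij} \mapsto \rho_{ij}$ by linearity to a ${\mathbb C}$-linear map $\phi: gl(n) \to End_{\mathbb C}[{\mathbb C}[M_{n,n}]]$. This is well-defined since $\{e_{ij}\}$ is a basis of $gl(n)$. Next I would verify that $\phi$ is a Lie algebra morphism: the bracket in $gl(n)$ on basis elements is
$$
[e_{ij}, e_{hk}] \;=\; \delta_{jh}\, e_{ik} \;-\; \delta_{ik}\, e_{hj},
$$
and Proposition \ref{main identity} gives the matching identity
$$
[\rho_{ij}, \rho_{hk}] \;=\; \delta_{jh}\, \rho_{ik} \;-\; \delta_{ik}\, \rho_{hj}
$$
in $End_{\mathbb C}[{\mathbb C}[M_{n,n}]]$. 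Hence $\phi([x,y]) = [\phi(x), \phi(y)]$ on the basis, and by bilinearity on all of $gl(n) \times gl(n)$.

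Having established that $\phi$ is a Lie algebra homomorphism into the commutator algebra, I would invoke the universal property of $\mathbf{U}(gl(n))$ to obtain a unique associative algebra homomorphism
$$
\tau : \mathbf{U}(gl(n)) \longrightarrow End_{\mathbb C}[{\mathbb C}[M_{n,n}]]
$$
extending $\phi$, that is, satisfying $\tau(e_{ij}) = \rho_{ij}$ for all $i,j$.

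There is no real obstacle here: the entire content of the statement has already been absorbed into Proposition \ref{main identity}, which encodes exactly the Lie bracket relations of $gl(n)$ on the generators. The proof is therefore essentially a one-line invocation of the universal property of the enveloping algebra once the commutation identity is in hand.
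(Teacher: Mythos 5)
Your proposal is correct and is exactly the argument the paper intends: the text explicitly states that the proposition follows from Proposition \ref{main identity} by the universal property of $\mathbf{U}(gl(n))$, which is precisely your route (linear extension, verification of the bracket relations on the basis via Proposition \ref{main identity}, then the universal property). Nothing is missing.
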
\qed

Let $\varepsilon_1$ be the linear map \emph{evaluation at} $1$
$$
\varepsilon_1 : End_{\mathbb C}[{\mathbb C}[M_{n,n}]]  \rightarrow {\mathbb C}[M_{n,n}],
$$
$$
\varepsilon_1(\rho) = \rho(1) \in {\mathbb C}[M_{n,n}], \quad for \ every \ \rho \in End_{\mathbb C}[{\mathbb C}[M_{n,n}]].
$$

The \emph{Koszul map} \cite{Koszul-BR} is the (\emph{linear}) composition map
$$
\mathcal{K} : \mathbf{U}(gl(n)) \rightarrow {\mathbb C}[M_{n,n}] \cong \mathbf{Sym}(gl(n))  ,
$$
$$
\mathcal{K} = \varepsilon_1 \circ \tau.
$$

\begin{proposition} We have:
\begin{enumerate}

\item
$
\mathcal{K}(e_{i_1j_1}e_{i_2j_2} \cdots e_{i_hj_h}) = \rho_{i_1j_1}\rho_{i_2j_2} \cdots \rho_{i_hj_h}(1),
$
\ $e_{i_pj_p} \in gl(n)$, $p =1 2, \ldots ,h$.

\item
$
\mathcal{K}(e_{ij}\mathbf{P}) = \rho_{ij}(\mathcal{K}(\mathbf{P})),
$
\ for every $\mathbf{P} \in \mathbf{U}(gl(n))$, $\ e_{ij} \in gl(n)$.
\end{enumerate}
\end{proposition}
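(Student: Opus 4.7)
The plan is to derive both parts directly from the definition $\mathcal{K} = \varepsilon_1 \circ \tau$, exploiting that $\tau$ is an associative algebra morphism (which has already been established in the preceding proposition).

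For part (1), I would first unravel the definition: by associativity of $\tau$,
\[
\tau(e_{i_1j_1}e_{i_2j_2}\cdots e_{i_hj_h}) \;=\; \tau(e_{i_1j_1})\,\tau(e_{i_2j_2})\cdots \tau(e_{i_hj_h}) \;=\; \rho_{i_1j_1}\rho_{i_2j_2}\cdots \rho_{i_hj_h},
\]
where the product on the right is composition of operators in $\mathrm{End}_{\mathbb{C}}[{\mathbb{C}}[M_{n,n}]]$. Applying $\varepsilon_1$ to both sides yields the asserted formula. The only subtlety worth flagging is that the element on the left lives in $\mathbf{U}(gl(n))$, so we are implicitly using the universal property (already invoked in the construction of $\tau$) to justify the multiplicativity.

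For part (2), the same mechanism gives the more general statement:
\[
\mathcal{K}(e_{ij}\mathbf{P}) \;=\; \varepsilon_1\bigl(\tau(e_{ij}\mathbf{P})\bigr) \;=\; \varepsilon_1\bigl(\rho_{ij}\circ \tau(\mathbf{P})\bigr) \;=\; \rho_{ij}\bigl(\tau(\mathbf{P})(1)\bigr) \;=\; \rho_{ij}\bigl(\mathcal{K}(\mathbf{P})\bigr).
\]
Here the middle equality again uses that $\tau$ is an algebra morphism, and the next-to-last equality is just the definition of evaluation at $1$ applied to the composition $\rho_{ij}\circ \tau(\mathbf{P})$.

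There is no real obstacle: both assertions are formal consequences of the fact that $\mathcal{K}$ is built as ``apply the universal morphism $\tau$, then evaluate at $1$.'' Part (1) is the specialization of part (2) to monomials $\mathbf{P}=e_{i_2j_2}\cdots e_{i_hj_h}$ combined with an induction that peels off the leftmost factor (alternatively, both parts can be proved simultaneously by induction on the length of a product). The only thing one must be careful about is keeping straight that $\rho_{ij}$ acts on the polynomial algebra while $e_{ij}$ multiplies inside $\mathbf{U}(gl(n))$; but this distinction is precisely what the morphism $\tau$ is designed to bridge.
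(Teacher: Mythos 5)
Your proposal is correct and is exactly the argument the paper has in mind: the proposition is stated with no written proof precisely because both parts follow immediately from $\mathcal{K}=\varepsilon_1\circ\tau$ together with the multiplicativity of $\tau$, which is how you argue. Both computations check out, so nothing further is needed.
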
 \qed

\section{Expansion formulae for
\textit{column Capelli bitableaux} and
\textit{column Capelli *-bitableaux}}\label{column expansion}

Consider the \textit{column Capelli bitableau}
$$
\left[
\begin{array}{c}
i_1\\  i_2 \\ \vdots \\ i_h
\end{array}
\right| \left.
\begin{array}{c}
j_1\\  j_2 \\ \vdots \\ j_h
\end{array}
\right]
=
\mathfrak{p} \Big(e_{i_1 \alpha_1} \cdots e_{i_h \alpha_h}e_{\alpha_1 j_1} \cdots e_{\alpha_h j_h} \Big) 
\in \mathbf{U}(gl(n)),
$$
(where $\alpha_1, \ldots, \alpha_h$ are arbitrary \emph{distict positive virtual} symbols)
and the \textit{column Capelli *-bitableau}
$$
\left[
\begin{array}{c}
i_1\\  i_2 \\ \vdots \\ i_h
\end{array}
\right| \left.
\begin{array}{c}
j_1\\  j_2 \\ \vdots \\ j_h
\end{array}
\right]^*
=
\mathfrak{p} \Big(e_{i_1 \beta_1} \cdots e_{i_h \beta_h}e_{\beta_1 j_1} \cdots e_{\beta_h j_h} \Big) \in \mathbf{U}(gl(n))
$$
(where $\beta_1, \ldots, \beta_h$ are arbitrary \emph{distict negative virtual} symbols).

Remember that the \emph{proper symbols} $i_1, \ldots,i_h, \ j_1, \ldots,  j_h \in 
L = \{1, \ 2, \ \ldots, \ n \}$
are assumed to be \emph{negative}.

From the definitions, it follows
\begin{equation}\label{sign Capelli column}
\left[
\begin{array}{c}
i_1\\  i_2 \\ \vdots \\ i_h
\end{array}
\right| \left.
\begin{array}{c}
j_1\\  j_2 \\ \vdots \\ j_h
\end{array}
\right]
= (-1)^{n \choose 2}
\left[
\begin{array}{c}
i_1\\  i_2 \\ \vdots \\ i_h
\end{array}
\right| \left.
\begin{array}{c}
j_1\\  j_2 \\ \vdots \\ j_h
\end{array}
\right]^*.
\end{equation}

From the definitions, we infer
\begin{proposition}
Column Capelli bitableaux and column Capelli *-bitableaux are
row-commutative as elements of $\mathbf{U}(gl(n))$:
\begin{enumerate}

\item
$$
\left[
\begin{array}{c}
i_1\\  i_2 \\ \vdots \\ i_h
\end{array}
\right| \left.
\begin{array}{c}
j_1\\  j_2 \\ \vdots \\ j_h
\end{array}
\right]
=
\left[
\begin{array}{c}
i_{\sigma(1)}\\  i_{\sigma(2)} \\ \vdots \\ i_{\sigma(h)}
\end{array}
\right| \left.
\begin{array}{c}
j_{\sigma(1)}\\  j_{\sigma(2)} \\ \vdots \\ j_{\sigma(h)}
\end{array}
\right], \quad \sigma \in \textbf{S}_h,
$$

\item
$$
\left[
\begin{array}{c}
i_1\\  i_2 \\ \vdots \\ i_h
\end{array}
\right| \left.
\begin{array}{c}
j_1\\  j_2 \\ \vdots \\ j_h
\end{array}
\right]^*
=
\left[
\begin{array}{c}
i_{\sigma(1)}\\  i_{\sigma(2)} \\ \vdots \\ i_{\sigma(h)}
\end{array}
\right| \left.
\begin{array}{c}
j_{\sigma(1)}\\  j_{\sigma(2)} \\ \vdots \\ j_{\sigma(h)}
\end{array}
\right]^*, \quad \sigma \in \textbf{S}_h,
$$

\end{enumerate} \qed

\end{proposition}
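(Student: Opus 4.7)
The plan is to lift each column Capelli (*-)bitableau to its balanced-monomial virtual presentation in $Virt(m_0+m_1,n) \subset \mathbf{U}(gl(m_0|m_1+n))$, use the supercommutation rules of $gl(m_0|m_1+n)$ to reorder factors within the two ``creation'' and ``annihilation'' blocks, and then apply the devirtualization epimorphism $\mathfrak{p}$. The driving calculation is that for any $s \neq t$ the supercommutator
\[
[e_{a,b}, e_{c,d}]_s = \delta_{bc}\, e_{a,d} - (-1)^{(|a|+|b|)(|c|+|d|)}\delta_{ad}\, e_{c,b}
\]
vanishes on the pairs $(e_{i_s,\alpha_s}, e_{i_t,\alpha_t})$ and $(e_{\alpha_s,j_s}, e_{\alpha_t,j_t})$, and likewise on the corresponding $\beta$-pairs: the proper alphabet $L$ is disjoint from both virtual alphabets, and the chosen $\alpha_s$ (resp.\ $\beta_s$) are distinct, so every Kronecker delta above is zero.

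For the $*$-case the factors $e_{i_s,\beta_s}$ and $e_{\beta_s,j_s}$ are even, since $|i_s|+|\beta_s| = 1+1 = 0$, so the vanishing supercommutator is ordinary commutativity. Given $\sigma \in \textbf{S}_h$, I would rearrange each of the two blocks freely and then relabel $\beta_s \mapsto \beta_{\sigma(s)}$ (still a set of distinct negative virtual symbols) to recognize the resulting balanced monomial as a virtual presentation of $[\sigma(S)|\sigma(T)]^*$; applying $\mathfrak{p}$ gives (2). For the non-$*$ case the factors $e_{i_s,\alpha_s}$ and $e_{\alpha_s,j_s}$ are odd ($|i_s|+|\alpha_s| = 1+0 = 1$), hence pairwise anticommute. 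Rearranging each of the two blocks by the permutation that takes $(\sigma(1),\ldots,\sigma(h))$ to $(1,\ldots,h)$ now introduces the sign $\mathrm{sgn}(\sigma)$ \emph{twice}, and these cancel; after the analogous relabeling of the $\alpha$'s one gets (1).

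The one technical point that must be granted is that a column Capelli (*-)bitableau is independent of the specific choice of distinct virtual symbols in its defining virtual presentation. This is implicit in the devirtualization construction, since two such choices produce balanced monomials acting identically on $\mathbb{C}[M_{n,n}]$ via $\varrho$, hence differ by an element of $\mathbf{Irr} = \ker \mathfrak{p}$; I would either invoke this as a standing convention or insert a one-line remark to that effect. Once this invariance is available, the main (minor) obstacle is just the parity bookkeeping above: checking that in case (1) the two sign contributions combine to $\mathrm{sgn}(\sigma)^2 = 1$, and that in case (2) no signs arise at all.
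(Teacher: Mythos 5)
Your argument is correct and is exactly the ``from the definitions'' computation the paper intends (the paper offers no written proof): within each of the two blocks of the balanced monomial the factors pairwise supercommute because every Kronecker delta in $[e_{a,b},e_{c,d}]$ compares a proper with a virtual symbol, the two signs $\mathrm{sgn}(\sigma)$ cancel in the odd ($\alpha$) case and no signs arise in the even ($\beta$) case. The ``technical point'' you flag is already absorbed by the paper's phrasing ``arbitrary distinct positive (negative) virtual symbols'': the permuted monomial you obtain is literally the defining presentation of $[\sigma S|\sigma T]$ for the admissible choice $\alpha_{\sigma(1)},\dots,\alpha_{\sigma(h)}$, so no separate relabeling lemma (and no appeal to faithfulness of the action on ${\mathbb C}[M_{n,n}]$) is needed.
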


We provide two basic expansion formulae that describe the effect of picking out (on the left hand side)
the  first row of column Capelli bitableaux and column Capelli *-bitableaux.
These formulae play a crucial role in the theory of the Koszul map $\mathcal{K}$,
and  provide a simple way to compute the \emph{actual} forms of
column Capelli bitableaux and column Capelli *-bitableaux as elements of $\mathbf{U}(gl(n))$.

\begin{proposition}\label{column exp} We have:

\

\begin{enumerate}
\item
\begin{align*}
& \left[
\begin{array}{c}
i_1\\  i_2 \\ \vdots \\ i_h
\end{array}
\right| \left.
\begin{array}{c}
j_1\\  j_2 \\ \vdots \\ j_h
\end{array}
\right]  =
\\
=&
\ (-1)^{h-1} e_{i_1j_1} \left[
\begin{array}{c}
  i_2 \\ \vdots \\ i_h
\end{array}
\right| \left.
\begin{array}{c}
  j_2 \\ \vdots \\ j_h
\end{array}
\right]   +
(-1)^{h-2} \sum_{k=2}^h \ \delta_{i_kj_1} \ \left[
\begin{array}{c}
 i_2 \\ \vdots \\ i_1 \\ \vdots \\ i_h
\end{array}
\right| \left.
\begin{array}{c}
 j_2 \\ \vdots \\ j_k \\ \vdots \\ j_h
\end{array}
\right] \in \mathbf{U}(gl(n)).
\end{align*}

\item
\begin{align*}
& \left[
\begin{array}{c}
i_1\\  i_2 \\ \vdots \\ i_h
\end{array}
\right| \left.
\begin{array}{c}
j_1\\  j_2 \\ \vdots \\ j_h
\end{array}
\right]^*  =
\\
=&
\  e_{i_1j_1} \left[
\begin{array}{c}
  i_2 \\ \vdots \\ i_h
\end{array}
\right| \left.
\begin{array}{c}
  j_2 \\ \vdots \\ j_h
\end{array}
\right]^*   -
\sum_{k=2}^h \ \delta_{i_kj_1} \ \left[
\begin{array}{c}
 i_2 \\ \vdots \\ i_1 \\ \vdots \\ i_h
\end{array}
\right| \left.
\begin{array}{c}
 j_2 \\ \vdots \\ j_k \\ \vdots \\ j_h
\end{array}
\right]^* \in \mathbf{U}(gl(n)).
\end{align*}

\end{enumerate}
\end{proposition}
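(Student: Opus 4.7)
The plan is to start from the virtual presentation
\[
\mathfrak{p}\bigl(e_{i_1\alpha_1}\cdots e_{i_h\alpha_h}\cdot e_{\alpha_1 j_1}\cdots e_{\alpha_h j_h}\bigr)
\]
of the column Capelli bitableau and push the factor $e_{\alpha_1 j_1}$ leftward through the chain $e_{i_h\alpha_h},\ldots,e_{i_2\alpha_2}$. For each $k\ge 2$, distinctness of the virtual symbols gives $[e_{\alpha_1 j_1},e_{i_k\alpha_k}]_s=\delta_{i_k j_1}\,e_{\alpha_1\alpha_k}$, and odd--odd parity yields
$e_{i_k\alpha_k}\,e_{\alpha_1 j_1}=-e_{\alpha_1 j_1}\,e_{i_k\alpha_k}+\delta_{i_k j_1}\,e_{\alpha_1\alpha_k}$.
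Iterating $k=h,h-1,\ldots,2$ produces a principal term of sign $(-1)^{h-1}$ in which $e_{\alpha_1 j_1}$ now sits just after $e_{i_1\alpha_1}$, together with $h-1$ correction terms indexed by $k\in\{2,\ldots,h\}$ of sign $(-1)^{h-k}\delta_{i_k j_1}$, in which the pair $e_{i_k\alpha_k}\,e_{\alpha_1 j_1}$ has been replaced by $e_{\alpha_1\alpha_k}$.

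In the principal term I would apply once more $e_{i_1\alpha_1}\,e_{\alpha_1 j_1}=e_{i_1 j_1}-e_{\alpha_1 j_1}\,e_{i_1\alpha_1}+\delta_{i_1 j_1}\,e_{\alpha_1\alpha_1}$. The last two summands, embedded in front of the remainder of the word, exhibit an annihilation of $\alpha_1$ with no preceding creation and hence lie in $\mathbf{Irr}=\ker\mathfrak{p}$. Using that $\mathbf{Irr}$ is a two-sided ideal of $Virt(m_0+m_1,n)$, what survives after projection is exactly $(-1)^{h-1}\,e_{i_1 j_1}\,[i_2\cdots i_h\,|\,j_2\cdots j_h]$.

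For each correction term, the even factor $e_{\alpha_1\alpha_k}$ supercommutes trivially with every $e_{i_m\alpha_m}$, $m\neq k$, so it can be slid rightward through the left product without any sign. Sliding it further through the right product $e_{\alpha_2 j_2}\cdots e_{\alpha_h j_h}$ commutes freely with all factors except $e_{\alpha_k j_k}$, where the commutator $[e_{\alpha_1\alpha_k},e_{\alpha_k j_k}]_s=e_{\alpha_1 j_k}$ fires and the complementary summand pushes $e_{\alpha_1\alpha_k}$ to the extreme right, again producing an irregular expression. The surviving balanced monomial differs from the standard virtual presentation of the column Capelli bitableau with rows $(i_1,j_k),(i_2,j_2),\ldots,(i_{k-1},j_{k-1}),(i_{k+1},j_{k+1}),\ldots,(i_h,j_h)$ only by the position of $e_{\alpha_1 j_k}$ in its right product: moving it from slot $k-1$ to slot $1$ costs $(-1)^{k-2}$ by odd--odd transpositions of pairs of factors. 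Row-commutativity (the preceding Proposition) then rewrites the result in the shape displayed in the claim, and the total sign $(-1)^{h-k}(-1)^{k-2}=(-1)^{h-2}$ is independent of $k$, as required.

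Part (2) follows by the identical strategy applied to the $\beta$-presentation; since $|\beta_k|=|i_k|=|j_k|=1$, every factor $e_{i_k\beta_k}$ and $e_{\beta_k j_k}$ is \emph{even}, so all sliding supercommutators are ordinary commutators and no signs are generated by any reordering. The principal term emerges with coefficient $+1$ and each correction with coefficient $-1$, matching the claimed formula. The main obstacle lies in part (1): verifying the sign collapse $(-1)^{h-k}\cdot(-1)^{k-2}=(-1)^{h-2}$ once the right product of each correction monomial has been placed into the standard form of a column Capelli bitableau; the rest is parity bookkeeping and routine application of the fact that every expression exhibiting a premature annihilation of a virtual symbol lies in $\ker\mathfrak{p}$.
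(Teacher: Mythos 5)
Your proof is correct and follows essentially the same route as the paper's: supercommuting $e_{\alpha_1 j_1}$ leftward to produce the principal term with sign $(-1)^{h-1}$ and the corrections $\delta_{i_k j_1}e_{\alpha_1\alpha_k}$, then sliding the even factor $e_{\alpha_1\alpha_k}$ rightward, discarding irregular expressions, and checking the sign collapse $(-1)^{h-k}(-1)^{k-2}=(-1)^{h-2}$ before invoking row-commutativity. Your treatment is in fact slightly more explicit than the paper's on why the discarded summands lie in $\mathbf{Irr}$ and on the parity bookkeeping for the $*$-case, but the argument is the same.
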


\begin{proof}

By definition,
$$
\left[
\begin{array}{c}
i_1\\  i_2 \\ \vdots \\ i_{h-1} \\ i_h
\end{array}
\right| \left.
\begin{array}{c}
j_1 \\ j_2 \\ \vdots\\ j_{h-1} \\ j_h
\end{array}
\right] =
$$
\begin{align*}
=& \ \mathfrak{p} \big[ e_{{i_1},\alpha_1} e_{{i_2},\alpha_2}   \cdots e_{{i_{h-1}},\alpha_{h-1}} e_{{i_{h}},\alpha_h} \cdot
e_{\alpha_1, {j_1}} e_{\alpha_2, {j_2}}   \cdots e_{\alpha_{h-1}, {j_{h-1}}} e_{\alpha_h, {j_{h}}} \big] =                                            \\
=& \ \mathfrak{p} \big[
  - e_{{i_1},\alpha_1} e_{{i_2},\alpha_2}   \cdots e_{{i_{h-1}},\alpha_{h-1}} e_{\alpha_1, {j_1}}
 e_{{i_{h}},\alpha_h} \cdot e_{\alpha_2, {j_2}}   \cdots e_{\alpha_{h-1}, {j_{h-1}}} e_{\alpha_h, {j_{h}}}
\\
&\phantom{\ \mathfrak{p} \big[} + e_{{i_1},\alpha_1} e_{{i_2},\alpha_2}   \cdots e_{{i_{h-1}},\alpha_{h-1}}
  \cdot \delta_{i_h, j_1} e_{\alpha_1, \alpha_h} e_{\alpha_2, {j_2}}   \cdots  e_{\alpha_{h-1}, {j_{h-1}}} e_{\alpha_h, {j_{h}}} \big] =
\\
=& \ \mathfrak{p} \big[
  - e_{{i_1},\alpha_1} e_{{i_2},\alpha_2}   \cdots e_{{i_{h-1}},\alpha_{h-1}} e_{\alpha_1, {j_1}}
 e_{{i_{h}},\alpha_h} \cdot e_{\alpha_2, {j_2}}   \cdots e_{\alpha_{h-1}, {j_{h-1}}} e_{\alpha_h, {j_{h}}}
\\
&\phantom{\ \mathfrak{p} \big[} +
e_{{i_1},\alpha_1} e_{{i_2},\alpha_2}   \cdots e_{{i_{h-1}},\alpha_{h-1}}
  \cdot \delta_{i_h, j_1} e_{\alpha_2, {j_2}}   \cdots  e_{\alpha_{h-1}, {j_{h-1}}} e_{\alpha_1, {j_{h}}} \big].
\end{align*}
Notice that
$$
 \delta_{i_h, j_1} \
e_{{i_1},\alpha_1} e_{{i_2},\alpha_2}   \cdots e_{{i_{h-1}},\alpha_{h-1}}
  \cdot  e_{\alpha_2, {j_2}}   \cdots  e_{\alpha_{h-1}, {j_{h-1}}} e_{\alpha_1, {j_{h}}}  =
$$
$$
 \delta_{i_h, j_1} \ (-1)^{h - 2} \
e_{{i_1},\alpha_1} e_{{i_2},\alpha_2}   \cdots e_{{i_{h-1}},\alpha_{h-1}} \cdot
 e_{\alpha_1, {j_{h}}}    e_{\alpha_2, {j_2}}   \cdots  e_{\alpha_{h-1}, {j_{h-1}}}
$$
as elements of the algebra ${\mathbf{U}}(gl(m_0|m_1+n)).$

Therefore, the summand
$$
\mathfrak{p} \big[ e_{{i_1},\alpha_1} e_{{i_2},\alpha_2}   \cdots e_{{i_{h-1}},\alpha_{h-1}}
  \cdot \delta_{i_h, j_1} e_{\alpha_2, {j_2}}   \cdots  e_{\alpha_{h-1}, {j_{h-1}}} e_{\alpha_1, {j_{h}}} \big]
$$
equals
$$(-1)^{h - 2} \
\delta_{i_h, j_1} \
\left[
\begin{array}{c}
i_1\\  i_2 \\ \vdots \\ i_{h-1}
\end{array}
\right| \left.
\begin{array}{c}
j_h \\ j_2 \\ \vdots\\ j_{h-1}
\end{array}
\right].
$$
By repeating the above procedure of moving
left the element $e_{\alpha_1, {j_1}}$ - using the commutator identities in ${\mathbf{U}}(gl(m_0|m_1+n))$ -
we finally get
$$
\left[
\begin{array}{c}
i_1\\  i_2 \\ \vdots \\ i_{h-1} \\ i_h
\end{array}
\right| \left.
\begin{array}{c}
j_1 \\ j_2 \\ \vdots\\ j_{h-1} \\ j_h
\end{array}
\right] =
$$
\begin{align*}
=& \ \mathfrak{p} \big[ (-1)^{h - 1}
  e_{{i_1},\alpha_1} e_{\alpha_1, {j_1}}e_{{i_2},\alpha_2}   \cdots e_{{i_{h-1}},\alpha_{h-1}}
 e_{{i_{h}},\alpha_h} \cdot e_{\alpha_2, {j_2}}   \cdots e_{\alpha_{h-1}, {j_{h-1}}} e_{\alpha_h, {j_{h}}}
\\
&\phantom{\ \mathfrak{p} \big[} +
 \sum_{i = 0}^{h - 2} \ (-1)^i \
e_{{i_1},\alpha_1}    \cdots \delta_{i_{h-i}, j_1}\widehat{e_{{i_{h-i}},\alpha_{h-i}}}e_{\alpha_1,\alpha_{h-i}} \cdots
e_{{i_{h}},\alpha_{h}}
  \cdot  e_{\alpha_2, {j_2}}   \cdots   e_{\alpha_h, {j_{h}}} \big]
\\
=&  \ \mathfrak{p} \big[ (-1)^{h - 1}
  e_{{i_1},\alpha_1} e_{\alpha_1, {j_1}}e_{{i_2},\alpha_2}   \cdots e_{{i_{h-1}},\alpha_{h-1}}
 e_{{i_{h}},\alpha_h} \cdot e_{\alpha_2, {j_2}}   \cdots e_{\alpha_{h-1}, {j_{h-1}}} e_{\alpha_h, {j_{h}}}
\\
&\phantom{\ \mathfrak{p} \big[} +
\sum_{i = 0}^{h - 2} \ (-1)^i \
e_{{i_1},\alpha_1} \cdot\cdot \delta_{i_{h-i}, j_1} \cdots
e_{{i_{h}},\alpha_{h}}
  \cdot  e_{\alpha_2, {j_2}} \cdot\cdot  e_{\alpha_1,j_{h-i}}\cdot \cdot e_{\alpha_h, {j_{h}}} \big].
\end{align*}

Notice that the summand
$$
(-1)^i \ \delta_{i_{h-i}, j_1} \
e_{{i_1},\alpha_1} \cdot\cdot \delta_{i_{h-i}, j_1} \cdots
e_{{i_{h}},\alpha_{h}}
  \cdot  e_{\alpha_2, {j_2}} \cdot \cdot  e_{\alpha_1,j_{h-i}}\cdot \cdot e_{\alpha_h, {j_{h}}}
$$
equals
$$
(-1)^i \ \delta_{i_{h-i}, j_1} \ (-1)^{h-i-2} \times
$$
$$
e_{{i_1},\alpha_1}    \cdots \widehat{e_{i_{h-i},\alpha_{h-i}}} \cdots
e_{{i_{h}},\alpha_{h}}
  \cdot e_{\alpha_1,j_{h-i}}  e_{\alpha_2, {j_2}}   \cdots \widehat{e_{\alpha_{h-i}, {j_{h-i}}}} \dots  e_{\alpha_h, {j_{h}}}
$$
as elements of the algebra ${\mathbf{U}}(gl(m_0|m_1+n)).$

Hence
$$
\mathfrak{p} \left[ (-1)^i \ \delta_{i_{h-i}, j_1} \
e_{{i_1},\alpha_1}    \cdots \widehat{e_{{i_{h-i}},\alpha_{h-i}}}e_{\alpha_1,\alpha_{h-i}} \cdots
e_{{i_{h}},\alpha_{h}}
  \cdot  e_{\alpha_2, {j_2}}   \cdots   e_{\alpha_h, {j_{h}}} \right]
$$
equals
$$ (-1)^{h-2} \ \delta_{i_{h-i}, j_1} \
\left[
\begin{array}{c}
i_1\\  i_2 \\ \vdots \\  i_{h-i-1} \\\widehat{i_{h-i}} \\ i_{h-i+1} \\ i_h
\end{array}
\right| \left.
\begin{array}{c}
j_{h-i} \\ j_2 \\ \vdots \\  j_{h-i-1} \\ \widehat{j_{h-i}} \\ j_{h-i+1} \\ j_h
\end{array}
\right].
$$
Furthermore
$$
\mathfrak{p} \big[ (-1)^{h - 1}
  e_{{i_1},\alpha_1} e_{\alpha_1, {j_1}}e_{{i_2},\alpha_2}   \cdots e_{{i_{h-1}},\alpha_{h-1}}
 e_{{i_{h}},\alpha_h} \cdot e_{\alpha_2, {j_2}}   \cdots e_{\alpha_{h-1}, {j_{h-1}}} e_{\alpha_h, {j_{h}}}
\big] =
$$
$$
= (-1)^{h - 1} \ e_{i_1, j_1} \
\left[
\begin{array}{c}
i_2 \\ \vdots \\ i_{h-1} \\ i_h
\end{array}
\right| \left.
\begin{array}{c}
j_2 \\ \vdots\\ j_{h-1} \\ j_h
\end{array}
\right].
$$

Since column Capelli bitableaux are row-commutative, by setting $k = h -i$ 
we proved the first expansion identity.
The second expansion identity can be proved in a similar way.
\end{proof}

\begin{example}
\begin{align*}
\left[
\begin{array}{c}
 1 \\ 2 \\ 3 \\ 2
\end{array}
\right| \left.
\begin{array}{c}
  2 \\ 3 \\ 4 \\ 3
\end{array}
\right] &=
- e_{12}\left[
\begin{array}{c}
  2 \\ 3 \\ 2
\end{array}
\right| \left.
\begin{array}{c}
  3 \\ 4 \\ 3
\end{array}
\right]
+
\left[
\begin{array}{c}
  1 \\ 3 \\ 2
\end{array}
\right| \left.
\begin{array}{c}
  3 \\ 4 \\ 3
\end{array}
\right]
+
\left[
\begin{array}{c}
  2 \\ 3 \\ 1
\end{array}
\right| \left.
\begin{array}{c}
  3 \\ 4 \\ 3
\end{array}
\right]
\\
&=
- e_{12}\left[
\begin{array}{c}
  2 \\ 3 \\ 2
\end{array}
\right| \left.
\begin{array}{c}
  3 \\ 4 \\ 3
\end{array}
\right]
+
\ 2 \ \left[
\begin{array}{c}
  1 \\ 3 \\ 2
\end{array}
\right| \left.
\begin{array}{c}
  3 \\ 4 \\ 3
\end{array}
\right]
\\
&=
- e_{12} \Big( e_{23} \left[
\begin{array}{c}
3 \\ 2
\end{array}
\right| \left.
\begin{array}{c}
4 \\ 3
\end{array}
\right]
-
\left[
\begin{array}{c}
2 \\ 2
\end{array}
\right| \left.
\begin{array}{c}
4 \\ 3
\end{array}
\right] \Big)
\\
& \phantom{=} \
+
2 \Big( e_{13}
\left[
\begin{array}{c}
3 \\ 2
\end{array}
\right| \left.
\begin{array}{c}
4 \\ 3
\end{array}
\right]
 -
\left[
\begin{array}{c}
1 \\ 2
\end{array}
\right| \left.
\begin{array}{c}
4 \\ 3
\end{array}
\right] \Big)
\\
&=
e_{12}e_{23}e_{34}e_{23} -
e_{12}e_{24}e_{23}
- 2 e_{13}e_{34}e_{23}
+ 2 e_{14}e_{23} \in \mathbf{U}(gl(4)).
\end{align*}
\end{example}\qed

Notice that, for $h = 1$, $[i|j] = [i|j]^* = e_{ij}$. Then,
from Proposition \ref{column exp}, it follows 
\begin{corollary}\label{generators}
The family of column Capelli bitableaux  (*-bitableaux) is a system
of linear generators of $\mathbf{U}(gl(n))$.
\end{corollary}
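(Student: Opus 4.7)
The plan is to invert the first column expansion identity of Proposition~\ref{column exp}, thereby expressing $e_{ij}$ times a column Capelli bitableau of depth $h-1$ as a linear combination of column Capelli bitableaux of depths $h$ and $h-1$, and then to induct on the length of an arbitrary monomial in the generators $\{e_{ij}\}$ of $\mathbf{U}(gl(n))$.

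First I would solve the identity of Proposition~\ref{column exp}(1) for the $e_{i_1 j_1}$-prefixed summand, obtaining
$$
e_{i_1 j_1} \bigl[ i_2 \cdots i_h \,\big|\, j_2 \cdots j_h \bigr] = (-1)^{h-1}\bigl[ i_1 i_2 \cdots i_h \,\big|\, j_1 j_2 \cdots j_h \bigr] + \sum_{k=2}^{h} \delta_{i_k j_1}\, \bigl[ i_2 \cdots i_1 \cdots i_h \,\big|\, j_2 \cdots j_k \cdots j_h \bigr],
$$
where every bracket on the right is itself a column Capelli bitableau (of depth $h$ or $h-1$). Hence left multiplication by any generator $e_{ij}$ sends the linear span of column Capelli bitableaux into itself.

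Next I would induct on the length $k$ of a monomial $M = e_{a_1 b_1} e_{a_2 b_2} \cdots e_{a_k b_k}$ in $\mathbf{U}(gl(n))$. The base case $k=1$ is the identity $e_{ab} = [a|b]$ noted in the paragraph preceding the corollary, which exhibits $M$ as a column Capelli bitableau of depth one. For the inductive step, the hypothesis expresses $e_{a_2 b_2} \cdots e_{a_k b_k}$ as a linear combination of column Capelli bitableaux, and by the previous paragraph $e_{a_1 b_1}$ times each summand again lies in that span. Since such monomials linearly span $\mathbf{U}(gl(n))$, the column Capelli bitableaux form a system of linear generators.

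The same argument, with part (2) of Proposition~\ref{column exp} in place of part (1) and the base case $e_{ab} = [a|b]^*$, proves the corresponding statement for column Capelli *-bitableaux. I do not foresee any serious obstacle: all the nontrivial content is already packaged inside Proposition~\ref{column exp}, and what remains is a clean induction together with a small amount of sign and index bookkeeping in the rearrangement, neither of which affects the spanning conclusion.
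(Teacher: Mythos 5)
Your proposal is correct and is exactly the argument the paper intends: the paper's one-line justification (the base case $[i|j]=e_{ij}$ plus Proposition~\ref{column exp}) is precisely your rearranged identity followed by induction on monomial length, with your sign bookkeeping $e_{i_1j_1}[i_2\cdots i_h|j_2\cdots j_h]=(-1)^{h-1}[i_1\cdots i_h|j_1\cdots j_h]+\sum_k\delta_{i_kj_1}[\cdots|\cdots]$ checking out. You have simply written out the details the paper leaves implicit.
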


\section{Main results}

\begin{proposition}\label{K column}
\begin{align*}
\mathcal{K} \big(  \left[
\begin{array}{c}
i_1\\  i_2 \\ \vdots \\ i_h
\end{array}
\right| \left.
\begin{array}{c}
j_1\\  j_2 \\ \vdots \\ j_h
\end{array}
\right]  \big) =&
\left(
\begin{array}{c}
i_1\\  i_2 \\ \vdots \\ i_h
\end{array}
\right| \left.
\begin{array}{c}
j_1\\  j_2 \\ \vdots \\ j_h
\end{array}
\right)
\\
=& \ (-1)^{h \choose 2} (i_1|j_1)(i_2|j_2) \dots (i_h|j_h)
 \in {\mathbb C}[M_{n,n}] \cong \mathbf{Sym}(gl(n)).
\end{align*}
\end{proposition}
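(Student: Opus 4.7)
The plan is to argue by induction on the depth $h$, using the column expansion formula of Proposition \ref{column exp}(1) together with the multiplicative property $\mathcal{K}(e_{ij}\mathbf{P}) = \rho_{ij}(\mathcal{K}(\mathbf{P}))$ of the Koszul map. The base case $h=1$ is immediate: since $[i_1|j_1] = e_{i_1 j_1}$, we have $\mathcal{K}(e_{i_1 j_1}) = \rho_{i_1 j_1}(1) = D^{\textit{l}}_{i_1 j_1}(1) + (i_1|j_1) = (i_1|j_1)$, which matches the right-hand side since $\binom{1}{2}=0$.

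For the inductive step, abbreviate $f = (i_2|j_2)\cdots(i_h|j_h)$ and let $\mathcal{L}$ denote the depth-$(h-1)$ column Capelli bitableau with rows $(i_2,j_2),\ldots,(i_h,j_h)$. For each $k\in\{2,\ldots,h\}$, write $\mathcal{L}_k$ for the depth-$(h-1)$ column Capelli bitableau obtained from $\mathcal{L}$ by replacing the entry $i_k$ in the left column by $i_1$ (leaving the right column unchanged), and let $f_k$ be the monomial obtained from $f$ by replacing $(i_k|j_k)$ with $(i_1|j_k)$. The inductive hypothesis applied to $\mathcal{L}$ and to each $\mathcal{L}_k$ gives $\mathcal{K}(\mathcal{L}) = (-1)^{\binom{h-1}{2}} f$ and $\mathcal{K}(\mathcal{L}_k) = (-1)^{\binom{h-1}{2}} f_k$. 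Applying $\mathcal{K}$ to the identity in Proposition \ref{column exp}(1) and using the multiplicative property on the first summand yields
\begin{align*}
\mathcal{K}\!\left(\left[\begin{array}{c} i_1 \\ \vdots \\ i_h\end{array}\middle|\begin{array}{c} j_1 \\ \vdots \\ j_h\end{array}\right]\right)
&= (-1)^{h-1}(-1)^{\binom{h-1}{2}} \rho_{i_1 j_1}(f) \\
&\quad + (-1)^{h-2}(-1)^{\binom{h-1}{2}} \sum_{k=2}^h \delta_{i_k, j_1}\, f_k.
\end{align*}

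The crux of the argument is the cancellation that occurs upon expanding $\rho_{i_1 j_1}(f) = D^{\textit{l}}_{i_1 j_1}(f) + (i_1|j_1) f$. The Leibniz rule for the derivation $D^{\textit{l}}_{i_1 j_1}$, together with $D^{\textit{l}}_{i_1 j_1}((i_k|j_k)) = \delta_{j_1,i_k}(i_1|j_k)$, gives $D^{\textit{l}}_{i_1 j_1}(f) = \sum_{k=2}^h \delta_{j_1, i_k}\, f_k$. Hence the monomials $\delta_{j_1,i_k} f_k$ contribute with total coefficient $(-1)^{h-1}(-1)^{\binom{h-1}{2}} + (-1)^{h-2}(-1)^{\binom{h-1}{2}} = 0$ and cancel. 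Only the multiplicative part $(-1)^{h-1}(-1)^{\binom{h-1}{2}} (i_1|j_1) f$ survives, and the elementary identity $(h-1) + \binom{h-1}{2} = \binom{h}{2}$ produces the desired sign, matching \eqref{column monomial}.

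The only point requiring care, and therefore the main obstacle, is verifying that the delta-indexed monomials coming from the polarization $D^{\textit{l}}_{i_1 j_1}$ acting on $f$ and those arising from the sum in Proposition \ref{column exp}(1) are literally the same after $\mathcal{K}$ is applied: both families consist of $\delta_{j_1,i_k} f_k$ with the same indexing and the same substitution of $(i_k|j_k)$ by $(i_1|j_k)$. Since the entries $(i|j)$ commute in ${\mathbb C}[M_{n,n}]$, no reordering sign is incurred, and the cancellation is exact. Beyond this bookkeeping, the proposition follows in one line from the column expansion and the two listed properties of the Koszul map.
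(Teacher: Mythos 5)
Your proof is correct and follows essentially the same route as the paper: the paper also applies $\mathcal{K}$ to the expansion of Proposition \ref{column exp}(1), uses $\mathcal{K}(e_{i_1j_1}\mathbf{P})=\rho_{i_1j_1}(\mathcal{K}(\mathbf{P}))$, and cancels the $D^{\textit{l}}_{i_1j_1}$-terms against the delta-indexed sum, with the induction on $h$ left implicit. Your explicit bookkeeping of the signs $(h-1)+\binom{h-1}{2}=\binom{h}{2}$ and of the matching of the two families of delta-terms is exactly what the paper's displayed computation encodes.
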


\begin{proof}
\begin{align*}
& \mathcal{K} \big(  \left[
\begin{array}{c}
i_1\\  i_2 \\ \vdots \\ i_h
\end{array}
\right| \left.
\begin{array}{c}
j_1\\  j_2 \\ \vdots \\ j_h
\end{array}
\right]  \big) =
\\
=&
\ (-1)^{h-1} \mathcal{K} \big( e_{i_1j_1} \left[
\begin{array}{c}
  i_2 \\ \vdots \\ i_h
\end{array}
\right| \left.
\begin{array}{c}
  j_2 \\ \vdots \\ j_h
\end{array}
\right]  \big) +
(-1)^{h-2} \mathcal{K} \big( \sum_{k=2}^h \ \delta_{i_kj_1} \ \left[
\begin{array}{c}
 i_2 \\ \vdots \\ i_1 \\ \vdots \\ i_h
\end{array}
\right| \left.
\begin{array}{c}
 j_2 \\ \vdots \\ j_k \\ \vdots \\ j_h
\end{array}
\right]  \big)
\\
=&
\ (-1)^{h-1} \rho_{i_1j_1} \big(\mathcal{K} \big(  \left[
\begin{array}{c}
  i_2 \\ \vdots \\ i_h
\end{array}
\right| \left.
\begin{array}{c}
  j_2 \\ \vdots \\ j_h
\end{array}
\right]  \big) \big) +
(-1)^{h-2} \mathcal{K} \big( \sum_{k=2}^h \ \delta_{i_kj_1} \ \left[
\begin{array}{c}
 i_2 \\ \vdots \\ i_1 \\ \vdots \\ i_h
\end{array}
\right| \left.
\begin{array}{c}
 j_2 \\ \vdots \\ j_k \\ \vdots \\ j_h
\end{array}
\right]  \big)
\\
=&
\ (-1)^{h-1} D^{\textit{l}}_{i_1j_1} \big( \left(
\begin{array}{c}
 i_2 \\ \vdots \\ i_h
\end{array}
\right| \left.
\begin{array}{c}
  j_2 \\ \vdots \\ j_h
\end{array}
\right) \big) + (-1)^{h-1} (i_1|j_1)\left(
\begin{array}{c}
 i_2 \\ \vdots \\ i_h
\end{array}
\right| \left.
\begin{array}{c}
  j_2 \\ \vdots \\ j_h
\end{array}
\right)
\\
&
 \qquad \qquad + (-1)^{h-2} \sum_{k=2}^h \ \delta_{i_kj_1} \ \left(
\begin{array}{c}
 i_2 \\ \vdots \\ i_1 \\ \vdots \\ i_h
\end{array}
\right| \left.
\begin{array}{c}
 j_2 \\ \vdots \\ j_k \\ \vdots \\ j_h
\end{array}
\right)
\\
=&
\ (-1)^{h-1} (i_1|j_1)\left(
\begin{array}{c}
 i_2 \\ \vdots \\ i_h
\end{array}
\right| \left.
\begin{array}{c}
  j_2 \\ \vdots \\ j_h
\end{array}
\right) =
\left(
\begin{array}{c}
i_1\\  i_2 \\ \vdots \\ i_h
\end{array}
\right| \left.
\begin{array}{c}
j_1\\  j_2 \\ \vdots \\ j_h
\end{array}
\right).
\end{align*}
\end{proof}

\begin{example} Consider the column Capelli bitableau
$$
\left[
\begin{array}{c}
1\\  2  \\ 3
\end{array}
\right| \left.
\begin{array}{c}
2 \\ 1  \\ 1
\end{array}
\right] = e_{12} \left[
\begin{array}{c}
2  \\ 3
\end{array}
\right| \left.
\begin{array}{c}
 1  \\ 1
\end{array}
\right] - \left[
\begin{array}{c}
1  \\ 3
\end{array}
\right| \left.
\begin{array}{c}
 1  \\ 1
\end{array}
\right] = - e_{12}e_{21}e_{31} + e_{11}e_{31} \in \mathbf{U}(gl(n)).
$$

We have
\begin{align*}
\mathcal{K} \big( \left[
\begin{array}{c}
1\\  2  \\ 3
\end{array}
\right| \left.
\begin{array}{c}
2 \\ 1  \\ 1
\end{array}
\right] \big) &= \mathcal{K} \big( - e_{12}e_{21}e_{31} + e_{11}e_{31} \big)
\\
&=
\ \left(
\begin{array}{c}
1\\  2  \\ 3
\end{array}
\right| \left.
\begin{array}{c}
2 \\ 1  \\ 1
\end{array}
\right)
\\
&= -(1|2)(2|1)(3|1) \in {\mathbb C}[M_{n,n}] \cong \mathbf{Sym}(gl(n)).
\end{align*}
\end{example}\qed

\begin{proposition}\label{K *-column}
\begin{align*}
\mathcal{K} \big(  \left[
\begin{array}{c}
i_1\\  i_2 \\ \vdots \\ i_h
\end{array}
\right| \left.
\begin{array}{c}
j_1\\  j_2 \\ \vdots \\ j_h
\end{array}
\right]^*  \big) =&
\left(
\begin{array}{c}
i_1\\  i_2 \\ \vdots \\ i_h
\end{array}
\right| \left.
\begin{array}{c}
j_1\\  j_2 \\ \vdots \\ j_h
\end{array}
\right)^*
\\
=& \ (i_1|j_1)(i_2|j_2) \dots (i_h|j_h)
 \in {\mathbb C}[M_{n,n}] \cong \mathbf{Sym}(gl(n)).
\end{align*}
\end{proposition}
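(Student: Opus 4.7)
The plan is to mimic the proof of Proposition~\ref{K column} by induction on $h$, using part~(2) of Proposition~\ref{column exp} in place of part~(1). For the base case $h=1$, one has $[i_1|j_1]^{*}=e_{i_1 j_1}$, and by the definition of the Koszul map $\mathcal{K}(e_{i_1 j_1})=\rho_{i_1 j_1}(1)=D^{\textit{l}}_{i_1 j_1}(1)+(i_1|j_1)\cdot 1=(i_1|j_1)$, as required.

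For the inductive step, I apply $\mathcal{K}$ to both sides of the expansion furnished by Proposition~\ref{column exp}(2), then use linearity of $\mathcal{K}$ together with the identity $\mathcal{K}(e_{ij}\mathbf{P})=\rho_{ij}(\mathcal{K}(\mathbf{P}))$ to pull $\mathcal{K}$ past the leading factor $e_{i_1 j_1}$. Invoking the inductive hypothesis on each of the $(h-1)$-row column Capelli $*$-bitableaux on the right-hand side, the problem reduces to checking the polynomial identity
\[
\rho_{i_1 j_1}\!\bigl((i_2|j_2)\cdots(i_h|j_h)\bigr)\;-\;\sum_{k=2}^{h}\delta_{i_k j_1}\,(i_2|j_2)\cdots(i_1|j_k)\cdots(i_h|j_h)\;=\;(i_1|j_1)(i_2|j_2)\cdots(i_h|j_h).
\]
Writing $\rho_{i_1 j_1}=D^{\textit{l}}_{i_1 j_1}+(i_1|j_1)\cdot$, the multiplicative part contributes $(i_1|j_1)(i_2|j_2)\cdots(i_h|j_h)$ directly; meanwhile the Leibniz rule for the derivation $D^{\textit{l}}_{i_1 j_1}$, combined with $D^{\textit{l}}_{i_1 j_1}\bigl((h|k)\bigr)=\delta_{j_1 h}(i_1|k)$, expands into exactly the sum $\sum_{k=2}^{h}\delta_{i_k j_1}(i_2|j_2)\cdots(i_1|j_k)\cdots(i_h|j_h)$, which cancels the subtracted term.

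I expect no genuine obstacle: the $*$-case is actually cleaner than the determinantal case of Proposition~\ref{K column}, because no $(-1)^{h-1}$ sign appears in Proposition~\ref{column exp}(2) and the polynomial variables $(i|j)$ commute freely, so no sign bookkeeping is needed. The only small verification is that the delta-sum produced by $D^{\textit{l}}_{i_1 j_1}$ acting termwise on $(i_2|j_2)\cdots(i_h|j_h)$ is indexed by exactly the same $k=2,\ldots,h$ with the same $\delta_{i_k j_1}$ factor as the sum arising from Proposition~\ref{column exp}(2), which is immediate from the derivation property.

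As an alternative, the conclusion may also be obtained in a single line from Proposition~\ref{K column}: combine the sign relation (\ref{sign Capelli column}) between $[\,\cdot\,|\,\cdot\,]$ and $[\,\cdot\,|\,\cdot\,]^{*}$ with the corresponding polynomial relation between (\ref{column monomial}) and (\ref{column *-monomial}), and use linearity of~$\mathcal{K}$; I nonetheless favour the direct inductive proof above, as it parallels the structure of the determinantal case and does not depend on the column-height sign convention.
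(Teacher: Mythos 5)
Your proof is correct and follows essentially the same route as the paper: apply $\mathcal{K}$ to the expansion of Proposition~\ref{column exp}(2), pull $\mathcal{K}$ past $e_{i_1 j_1}$ via $\mathcal{K}(e_{ij}\mathbf{P})=\rho_{ij}(\mathcal{K}(\mathbf{P}))$, and observe that the derivation part of $\rho_{i_1 j_1}$ exactly cancels the subtracted delta-sum, leaving the multiplicative part $(i_1|j_1)\cdot$. You merely make the induction on $h$ explicit where the paper leaves it implicit, and your closing remark about the alternative via Eq.~(\ref{sign Capelli column}) is a reasonable aside.
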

\begin{proof}
\begin{align*}
 \mathcal{K} \big(  \left[
\begin{array}{c}
i_1\\  i_2 \\ \vdots \\ i_h
\end{array}
\right| \left.
\begin{array}{c}
j_1\\  j_2 \\ \vdots \\ j_h
\end{array}
\right]^*  \big) =&
\\
=&
\ \mathcal{K} \big( e_{i_1j_1} \left[
\begin{array}{c}
  i_2 \\ \vdots \\ i_h
\end{array}
\right| \left.
\begin{array}{c}
  j_2 \\ \vdots \\ j_h
\end{array}
\right]^*  \big) -
 \mathcal{K} \big( \sum_{k=2}^h \ \delta_{i_kj_1} \ \left[
\begin{array}{c}
 i_2 \\ \vdots \\ i_1 \\ \vdots \\ i_h
\end{array}
\right| \left.
\begin{array}{c}
 j_2 \\ \vdots \\ j_k \\ \vdots \\ j_h
\end{array}
\right]^*  \big)
\end{align*}

\begin{align*}
\phantom{ \quad \ \mathcal{K} \big(  \left[
\begin{array}{c}
i_1
\end{array}
\right| \left.
\begin{array}{c}
j_1
\end{array}
\right]^*  \big) =}&
\\
=&
\ \rho_{i_1j_1}  \big( \mathcal{K} \big(  \left[
\begin{array}{c}
  i_2 \\ \vdots \\ i_h
\end{array}
\right| \left.
\begin{array}{c}
  j_2 \\ \vdots \\ j_h
\end{array}
\right]^*  \big) \big) -
 \mathcal{K} \big( \sum_{k=2}^h \ \delta_{i_kj_1} \ \left[
\begin{array}{c}
 i_2 \\ \vdots \\ i_1 \\ \vdots \\ i_h
\end{array}
\right| \left.
\begin{array}{c}
 j_2 \\ \vdots \\ j_k \\ \vdots \\ j_h
\end{array}
\right]^*  \big)
\\
=&
\  D^{\textit{l}}_{i_1j_1} \big( \left(
\begin{array}{c}
 i_2 \\ \vdots \\ i_h
\end{array}
\right| \left.
\begin{array}{c}
  j_2 \\ \vdots \\ j_h
\end{array}
\right)^* \big) + (i_1|j_1)\left(
\begin{array}{c}
 i_2 \\ \vdots \\ i_h
\end{array}
\right| \left.
\begin{array}{c}
  j_2 \\ \vdots \\ j_h
\end{array}
\right)^*
\\
&
 \qquad \qquad - \sum_{k=2}^h \ \delta_{i_kj_1} \ \left(
\begin{array}{c}
 i_2 \\ \vdots \\ i_1 \\ \vdots \\ i_h
\end{array}
\right| \left.
\begin{array}{c}
 j_2 \\ \vdots \\ j_k \\ \vdots \\ j_h
\end{array}
\right)^*
\\
=&
(i_1|j_1) \left(
\begin{array}{c}
 i_2 \\ \vdots \\ i_h
\end{array}
\right| \left.
\begin{array}{c}
  j_2 \\ \vdots \\ j_h
\end{array}
\right)^* =
\left(
\begin{array}{c}
i_1\\  i_2 \\ \vdots \\ i_h
\end{array}
\right| \left.
\begin{array}{c}
j_1\\  j_2 \\ \vdots \\ j_h
\end{array}
\right)^*.
\end{align*}
\end{proof}

Notice that  Theorem \ref{operator B} specializes to
\begin{equation}\label{B column}
\mathcal{B} \big( \left(
\begin{array}{c}
i_1\\  i_2 \\ \vdots \\ i_h
\end{array}
\right| \left.
\begin{array}{c}
j_1\\  j_2 \\ \vdots \\ j_h
\end{array}
\right) \big) = \left[
\begin{array}{c}
i_1\\  i_2 \\ \vdots \\ i_h
\end{array}
\right| \left.
\begin{array}{c}
j_1\\  j_2 \\ \vdots \\ j_h
\end{array}
\right]
\end{equation}
and,   Theorem \ref{operator B^*} specializes to
\begin{equation}\label{B^* column}
\mathcal{B^*}\big( \left(
\begin{array}{c}
i_1\\  i_2 \\ \vdots \\ i_h
\end{array}
\right| \left.
\begin{array}{c}
j_1\\  j_2 \\ \vdots \\ j_h
\end{array}
\right)^* \big) = \left[
\begin{array}{c}
i_1\\  i_2 \\ \vdots \\ i_h
\end{array}
\right| \left.
\begin{array}{c}
j_1\\  j_2 \\ \vdots \\ j_h
\end{array}
\right]^*.
\end{equation}

\begin{theorem}\label{BCKtheorem} We have:

\begin{enumerate}

\item  $\mathcal{B} = \mathcal{K}^{-1}$,

\item  $\mathcal{B^*} = \mathcal{K}^{-1}$,

\item  $\mathcal{B},  \ \mathcal{B^*}, \ \mathcal{K}$ are linear isomorphisms,

\item  $\mathcal{B} = \mathcal{B^*}$.

\end{enumerate}
\end{theorem}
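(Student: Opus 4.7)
The plan is to leverage the column expansion theory developed in Section \ref{column expansion} together with Propositions \ref{K column} and \ref{K *-column}, which provide the crucial ``base case'' calculations on column elements. The key observation is that column Capelli bitableaux linearly span $\mathbf{U}(gl(n))$ (Corollary \ref{generators}), while column (determinantal) bitableaux, being monomials in ${\mathbb C}[M_{n,n}]$ up to a sign $(-1)^{h \choose 2}$ by equation (\ref{column monomial}), linearly span ${\mathbb C}[M_{n,n}]$.

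First, to prove $\mathcal{B} = \mathcal{K}^{-1}$, I would verify both compositions on these spanning families. On the ${\mathbb C}[M_{n,n}]$ side, for any column bitableau $(S|T)$, equation (\ref{B column}) gives $\mathcal{B}\big((S|T)\big) = [S|T]$ and Proposition \ref{K column} gives $\mathcal{K}\big([S|T]\big) = (S|T)$; hence $(\mathcal{K} \circ \mathcal{B})\big((S|T)\big) = (S|T)$, and by linearity $\mathcal{K} \circ \mathcal{B}$ is the identity on ${\mathbb C}[M_{n,n}]$. On the $\mathbf{U}(gl(n))$ side, for any column Capelli bitableau $[S|T]$, the very same two identities read in reverse order give $(\mathcal{B} \circ \mathcal{K})\big([S|T]\big) = [S|T]$, and by linearity together with Corollary \ref{generators}, $\mathcal{B} \circ \mathcal{K}$ is the identity on $\mathbf{U}(gl(n))$. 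Thus $\mathcal{B}$ and $\mathcal{K}$ are mutually inverse linear isomorphisms, establishing assertion (1) and part of (3).

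The argument for $\mathcal{B}^* = \mathcal{K}^{-1}$ is formally identical, using equation (\ref{B^* column}) and Proposition \ref{K *-column} on column Capelli *-bitableaux, which also linearly span $\mathbf{U}(gl(n))$ by Corollary \ref{generators}; this yields assertion (2) and completes (3). Assertion (4) then follows immediately from uniqueness of inverses: $\mathcal{B} = \mathcal{K}^{-1} = \mathcal{B}^*$.

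The only point that requires any thought is the two spanning claims. On the polynomial side a column bitableau is visibly a monomial up to sign, so spanning is automatic; on the enveloping algebra side the analogous spanning of $\mathbf{U}(gl(n))$ by column Capelli bitableaux is far less obvious, but it is precisely what Corollary \ref{generators}, itself a consequence of the column expansion Proposition \ref{column exp}, delivers. Beyond those inputs the proof is pure bookkeeping, and I foresee no real obstacle apart from being careful with signs and with the direction of composition.
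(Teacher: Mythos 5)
Your proposal is correct and follows essentially the same route as the paper: the paper likewise combines Corollary \ref{generators} with Eqs.\ (\ref{B column}), (\ref{B^* column}) and Propositions \ref{K column}, \ref{K *-column}, phrasing the conclusion in terms of surjectivity and injectivity of $\mathcal{B}$ and $\mathcal{B}^*$ rather than explicitly checking both compositions, but the ingredients and logic are identical. No gaps.
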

\begin{proof} From Corollary \ref{generators} and Eqs. (\ref{B column}) and  (\ref{B^* column}),
it follows that the operators $\mathcal{B}$ and $\mathcal{B^*}$ are surjective. 
Since column bitableaux
span ${\mathbb C}[M_{n,n}]$, Propositions \ref{K column} and \ref{K *-column}  imply that $\mathcal{B}$ and $\mathcal{B^*}$ are injective and $\mathcal{B} = \mathcal{K}^{-1}$ and $\mathcal{B^*} = \mathcal{K}^{-1}$. 
Then $\mathcal{B} = \mathcal{B^*}$.
\end{proof} 

By combining Theorems \ref{operator B} and \ref{operator B^*} with Theorem \ref{BCKtheorem}, 
it follows
\begin{corollary}\label{K action} We have:
\begin{itemize}

\item [--] $\mathcal{K} : [S|T] \mapsto (S|T),$

\item [--] $\mathcal{K} : [S|T]^* \mapsto (S|T)^*.$
\end{itemize}

\end{corollary}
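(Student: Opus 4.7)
The plan is to observe that this corollary is essentially the dual reformulation of the defining properties of the maps $\mathcal{B}$ and $\mathcal{B}^*$ once we know these maps are invertible with common inverse $\mathcal{K}$. Nothing new needs to be proved at the level of column bitableaux (which is where the real computation happened, in Propositions \ref{K column} and \ref{K *-column}); the content of the corollary is simply that the already-established pointwise correspondences propagate to all bitableaux of arbitrary shape.

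Concretely, I would first recall Theorem \ref{operator B}, which asserts precisely that the linear map $\mathcal{B}$ sends the determinantal bitableau $(S|T) \in {\mathbb C}[M_{n,n}]$ to the Capelli bitableau $[S|T] \in \mathbf{U}(gl(n))$ for every pair of Young tableaux $(S,T)$ of the same shape. Then, by Theorem \ref{BCKtheorem} (items (1) and (3)), the map $\mathcal{K}$ is a linear isomorphism and $\mathcal{B} = \mathcal{K}^{-1}$. Applying $\mathcal{K}$ to both sides of the identity $\mathcal{B}\bigl((S|T)\bigr) = [S|T]$ yields
\[
(S|T) \;=\; \mathcal{K}\bigl(\mathcal{B}((S|T))\bigr) \;=\; \mathcal{K}\bigl([S|T]\bigr),
\]
which is the first statement of the corollary.

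For the second statement, I would argue in exactly the same way, this time invoking Theorem \ref{operator B^*} (which gives $\mathcal{B}^*\bigl((S|T)^*\bigr) = [S|T]^*$) together with item (2) of Theorem \ref{BCKtheorem} (which asserts $\mathcal{B}^* = \mathcal{K}^{-1}$). Applying $\mathcal{K}$ to both sides produces $\mathcal{K}\bigl([S|T]^*\bigr) = (S|T)^*$, completing the proof.

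There is no serious obstacle here; the substantive work lies entirely in Theorem \ref{BCKtheorem}, whose proof in turn reduces the bijectivity of $\mathcal{B}$, $\mathcal{B}^*$, and $\mathcal{K}$ to the column case via Corollary \ref{generators} and the explicit Koszul computations in Propositions \ref{K column} and \ref{K *-column}. Once those are in hand, the present corollary is a one-line bookkeeping consequence, and the only thing to be careful about is invoking the correct defining identity of $\mathcal{B}$ (respectively $\mathcal{B}^*$) on arbitrary shapes, which is exactly what Theorems \ref{operator B} and \ref{operator B^*} guarantee.
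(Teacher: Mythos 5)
Your proof is correct and is exactly the derivation the paper intends: Corollary \ref{K action} is stated as an immediate consequence of combining Theorems \ref{operator B} and \ref{operator B^*} (which give $\mathcal{B}\bigl((S|T)\bigr)=[S|T]$ and $\mathcal{B}^*\bigl((S|T)^*\bigr)=[S|T]^*$) with Theorem \ref{BCKtheorem} (which gives $\mathcal{B}=\mathcal{B}^*=\mathcal{K}^{-1}$). Applying $\mathcal{K}$ to both sides, as you do, is precisely the intended one-line argument.
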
\qed

The Koszul isomorphism
$\mathcal{K}$ well-behaves with respect to  \emph{right symmetrized  bitableaux}
and {\textit{right Young-Capelli bitableaux}}.

\begin{proposition}\label{image symm} We have:
$$
\mathcal{K} : [S| \fbox{$T$} ] \mapsto (S|\fbox{$T$}).
$$
\end{proposition}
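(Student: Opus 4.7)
The strategy is to reduce the statement to Corollary \ref{K action} by establishing the identity
\[
[S|\fbox{$T$}] \;=\; \sum_{\overline{T}} [S|\overline{T}] \quad \text{in } \mathbf{U}(gl(n)),
\]
with the sum running over all column-permuted tableaux $\overline{T}$ of $T$, exactly as in (\ref{symmetrized bitableau}). Granting this identity, the linearity of $\mathcal{K}$ together with Corollary \ref{K action} immediately yields
\[
\mathcal{K}\bigl([S|\fbox{$T$}]\bigr) \;=\; \sum_{\overline{T}} \mathcal{K}\bigl([S|\overline{T}]\bigr) \;=\; \sum_{\overline{T}} (S|\overline{T}) \;=\; (S|\fbox{$T$}),
\]
by the very definition of the right symmetrized bitableau.

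To prove the combinatorial identity I unfold the virtual presentations
\[
[S|\fbox{$T$}] = \mathfrak{p}\bigl(e_{S, C_\lambda} \cdot e_{C_\lambda, D_\lambda} \cdot e_{D_\lambda, T}\bigr), \qquad [S|\overline{T}] = \mathfrak{p}\bigl(e_{S, C_\lambda} \cdot e_{C_\lambda, \overline{T}}\bigr),
\]
and use the fact that $\mathbf{Irr}$ is a left ideal of $Virt(m_0+m_1, n)$. It thus suffices to show
\[
e_{C_\lambda, D_\lambda} \cdot e_{D_\lambda, T} \;\equiv\; \sum_{\overline{T}} e_{C_\lambda, \overline{T}} \pmod{\mathbf{Irr}}
\]
inside the virtual algebra; left multiplication by $e_{S, C_\lambda}$ and application of $\mathfrak{p}$ then complete the reduction.

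The key computation proceeds column by column in $D_\lambda$: in column $j$, the factor coming from $e_{D_\lambda, T}$ creates $\widetilde{\lambda}_j$ copies of the negative virtual symbol $\beta_j$ (paired with the proper entries of column $j$ of $T$), while the corresponding factor from $e_{C_\lambda, D_\lambda}$ annihilates exactly those $\widetilde{\lambda}_j$ copies of $\beta_j$ (creating the $\alpha_i$'s). Using the supercommutation identity
\[
e_{\alpha_i, \beta_j} \cdot e_{\beta_j, t} \;=\; e_{\alpha_i, t} \;-\; e_{\beta_j, t} \cdot e_{\alpha_i, \beta_j}
\]
repeatedly, I push each odd operator $e_{\alpha_i, \beta_j}$ to the right until it annihilates a $\beta_j$; any term in which a $\beta_j$ is left uncancelled falls into $\mathbf{Irr}$ by the definition of an irregular expression. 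The surviving summands are in bijection with the perfect column-wise matchings between the rows of $C_\lambda$ and the rows of $T$, i.e., with column permutations $\overline{T}$ of $T$, and a small induction on $\widetilde{\lambda}_j$ identifies each such surviving summand with $e_{C_\lambda, \overline{T}}$.

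The main obstacle is the careful accounting of the supercommutator signs produced by the odd operators $e_{\alpha_i, \beta_j}$: one must verify that the signs generated by anticommuting these odd factors match exactly the signs picked up in writing each $e_{C_\lambda, \overline{T}}$ in its canonical row-by-row form (in which the $e_{\alpha_i, t}$ are themselves odd and thus anticommute). Once this sign matching is verified in a single-column case, the statement extends to arbitrary $\lambda$ by observing that operators associated to distinct $\beta_j$'s supercommute up to elements of $\mathbf{Irr}$, so the column contributions combine independently; this completes the proof of the combinatorial identity and hence of the proposition.
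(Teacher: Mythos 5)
Your approach coincides with the paper's: reduce to Corollary \ref{K action} via the identity $[S|\fbox{$T$}] = \sum_{\overline{T}} [S|\overline{T}]$, which the paper itself merely asserts ``easily follows from the definition, by applying the commutator identities'' in $\mathbf{U}(gl(m_0|m_1+n))$ -- your column-by-column devirtualization of $e_{C_\lambda,D_\lambda}e_{D_\lambda,T}$ modulo $\mathbf{Irr}$ is exactly the intended computation, carried out in more detail than the paper gives. One correction to the key displayed relation: $e_{\beta_j,t}$ is \emph{even} (its degree is $|\beta_j|+|t|=1+1=0$), so its superbracket with the odd element $e_{\alpha_i,\beta_j}$ is an ordinary commutator and the identity reads
$$
e_{\alpha_i,\beta_j}\,e_{\beta_j,t} \;=\; e_{\alpha_i,t} \;+\; e_{\beta_j,t}\,e_{\alpha_i,\beta_j},
$$
with a plus sign; similarly the odd factors $e_{\alpha_i,t}$ commute (do not anticommute) with the even factors $e_{\beta_j,t'}$. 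These plus signs are what make every surviving summand come out in the canonical order $e_{C_\lambda,\overline{T}}$ with coefficient $+1$, as required by the definition (\ref{symmetrized bitableau}); with the minus sign as you wrote it the bookkeeping would not close.
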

\begin{proof}
We notice that
$$
[S|\fbox{$T$})] = \sum_{\overline{T}} \ [S|\overline{T}], \quad 
(S|\fbox{$T$})) = \sum_{\overline{T}} \ (S|\overline{T}),
$$
where the sum is extended over {\textit{all}} $\overline{T}$ column permuted of $T$ (hence, repeated entries in
a column give rise to multiplicities).
The proof of the first equality easily follows from the definition, by applying the commutator identities
in the superalgebra ${\mathbf{U}}(gl(m_0|m_1+n))$.
The second equality is the definition of the right symmetrized bitableau 
$(S|\fbox{$T$}))$, Eq. (\ref{symmetrized bitableau}).
\end{proof}

From Proposition \ref{bases thms}, Corollary \ref{K action} and Proposition \ref{image symm}, it follows
\begin{corollary} The sets of standard  Capelli bitableaux,
of costandard  Capelli *-bitableaux
and  of standard Young-Capelli bitableaux:
\begin{itemize}

\item [--] $\Big \{ [S|T]; \ sh(S) =sh(T) =  \lambda, \ \lambda_1 \leq n, \ S,  T \ standard \Big \}$,

\item [--] $\Big \{ [U|V]^*; \ sh(U) =sh(V) =  \mu, \ \widetilde{\mu_1} \leq n, \ U,  V \ costandard \Big \}$,

\item [--] $\Big\{ [S|\fbox{$T$}]; \  \ sh(S) = sh(T) = \lambda, \ \lambda_1 \leq n, \ S, T \ standard \Big\}$

\end{itemize}
are linear bases of $\mathbf{U}(gl(n))$.

\end{corollary}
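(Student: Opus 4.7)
The plan is to transport the three basis theorems stated in Proposition \ref{bases thms} for $\mathbb{C}[M_{n,n}]$ across the Koszul isomorphism $\mathcal{K}$, using the fact that a linear isomorphism carries bases to bases. By Theorem \ref{BCKtheorem}, $\mathcal{K} : \mathbf{U}(gl(n)) \to \mathbb{C}[M_{n,n}]$ is a linear isomorphism and its inverse is $\mathcal{B} = \mathcal{B}^*$. Equivalently, $\mathcal{B}$ and $\mathcal{B}^*$ are themselves linear isomorphisms from $\mathbb{C}[M_{n,n}]$ onto $\mathbf{U}(gl(n))$.

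For the three cases, I would proceed in parallel. First, for standard Capelli bitableaux: by Corollary \ref{K action}, $\mathcal{K}$ sends $[S|T]$ to $(S|T)$, so the image of the set $\{[S|T] : sh(S)=sh(T)=\lambda,\ \lambda_1 \leq n,\ S, T \text{ standard}\}$ under $\mathcal{K}$ is precisely the set of standard determinantal bitableaux, which is a basis of $\mathbb{C}[M_{n,n}]$ by the first bullet of Proposition \ref{bases thms}. Since $\mathcal{K}$ is a linear isomorphism, the preimage is a linear basis of $\mathbf{U}(gl(n))$. Second, for costandard Capelli *-bitableaux, the identical argument applies using the second half of Corollary \ref{K action} (which gives $\mathcal{K}: [U|V]^* \mapsto (U|V)^*$) and the second bullet of Proposition \ref{bases thms}. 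Third, for standard right Young-Capelli bitableaux, Proposition \ref{image symm} gives $\mathcal{K}: [S|\fbox{$T$}] \mapsto (S|\fbox{$T$})$, so the same transport argument combined with the third bullet of Proposition \ref{bases thms} completes the proof.

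There is essentially no obstacle: all the nontrivial work has already been done in establishing that $\mathcal{K}$ is an isomorphism that intertwines the three pairs of distinguished elements, and in the three basis theorems on the polynomial side. The proof is purely a transport-of-structure argument, and the only thing to verify is that the indexing sets on the two sides coincide literally tableau-by-tableau, which is immediate from the definitions of $[S|T]$, $[S|T]^*$, and $[S|\fbox{$T$}]$ having the same shape data $sh(S) = sh(T)$ as their polynomial counterparts $(S|T)$, $(S|T)^*$, $(S|\fbox{$T$})$.
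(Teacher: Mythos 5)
Your proof is correct and follows exactly the route the paper takes: it deduces the corollary from Proposition \ref{bases thms}, Corollary \ref{K action} and Proposition \ref{image symm} by transporting the three bases of ${\mathbb C}[M_{n,n}]$ through the linear isomorphism $\mathcal{K}$ (Theorem \ref{BCKtheorem}). No gaps.
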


Furthermore, we have
\begin{theorem}\label{equivariant}  The Koszul isomorphism $\mathcal{K}$ is \emph{equivariant} with respect 
to the adjoint representations $\big( Ad_{gl(n)},  \mathbf{U}(gl(n)) \big)$ and
$\big( ad_{gl(n)},  {\mathbb C}[M_{n,n}] \big)$.
\end{theorem}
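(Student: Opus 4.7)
The plan is to verify the intertwining identity $\mathcal{K} \circ T_{ij} = (D^{\ell}_{ij} - D^{r}_{ji}) \circ \mathcal{K}$ on a convenient linear spanning set of $\mathbf{U}(gl(n))$. Since Theorem \ref{BCKtheorem} makes $\mathcal{K}$ a linear isomorphism and Corollary \ref{K action} gives $\mathcal{K}([S|T]) = (S|T)$, it is enough to establish
$$
\mathcal{K}\bigl(T_{ij}([S|T])\bigr) \;=\; \bigl(D^{\ell}_{ij} - D^{r}_{ji}\bigr)\bigl((S|T)\bigr)
$$
for every Capelli bitableau $[S|T]$ and every pair $i, j \in L$, since the Capelli bitableaux span $\mathbf{U}(gl(n))$ (being the $\mathcal{K}$-preimages of the spanning set of determinantal bitableaux in ${\mathbb C}[M_{n,n}]$).

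For the left-hand side, I would use the virtual presentation $[S|T] = \mathfrak{p}(e_{S, C_\lambda}\, e_{C_\lambda, T})$ together with the $Ad_{gl(n)}$-equivariance of the Capelli epimorphism $\mathfrak{p}$ (Proposition \ref{rappresentazione aggiunta-BR}). This reduces the computation of $T_{ij}([S|T])$ to that of $T_{ij}$ applied to the bitableau monomial inside $Virt(m_0+m_1, n)$. Since $T_{ij}$ is a derivation and any Kronecker delta between a proper symbol ($i$ or $j$) and a virtual symbol $\alpha_r$ vanishes, applying $T_{ij}(e_{a,b}) = \delta_{jb}\, e_{a,\cdot} - \delta_{ia}\, e_{\cdot,b}$ factor by factor leaves only two kinds of surviving contributions: a positive one $[S^{(p)}|T]$ for each entry $i_p$ of $S$ equal to $j$ (with $i_p$ overwritten by $i$), and a negative one $-[S|T^{(s)}]$ for each entry $j_s$ of $T$ equal to $i$ (with $j_s$ overwritten by $j$).

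For the right-hand side, recall that $(S|T)$ is, up to a sign depending only on $\lambda = sh(S) = sh(T)$, a product of biproducts which are themselves polynomials in the generators $(h|k)$. Since $D^{\ell}_{ij}$ and $D^{r}_{ji}$ are derivations of ${\mathbb C}[M_{n,n}]$, they act termwise, and the defining relations $D^{\ell}_{ij}\bigl((h|k)\bigr) = \delta_{jh}(i|k)$ and $D^{r}_{ji}\bigl((h|k)\bigr) = \delta_{ik}(h|j)$ produce exactly the combinatorial sum $\sum_{p:\, i_p = j} (S^{(p)}|T) - \sum_{s:\, j_s = i} (S|T^{(s)})$. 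Applying $\mathcal{K}$ to the expression obtained in the preceding paragraph and using Corollary \ref{K action} on each summand matches the two sides, proving the equivariance.

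The main obstacle I anticipate is the careful bookkeeping around signs and the virtual/proper distinction: one has to verify rigorously that every cross-term $\delta_{i,\alpha_r}$ or $\delta_{j,\alpha_r}$ coming from the derivation identity does vanish, and that the global sign $\mathbf{s}$ of a Young bitableau together with the $(-1)^{\binom{p}{2}}$ appearing in each biproduct are preserved under the entry substitutions $i_p \to i$ in $S$ and $j_s \to j$ in $T$ (which they are, since the common shape $\lambda$ is unchanged and these signs depend only on $\lambda$). Once these ingredients are aligned, the two combinatorial expansions coincide term by term and the equivariance extends by linearity to all of $\mathbf{U}(gl(n))$.
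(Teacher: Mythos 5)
Your proposal is correct and follows essentially the same route as the paper: one verifies the intertwining identity $\mathcal{K}\circ T_{ij}=(D^{\textit{l}}_{ij}-D^{\textit{r}}_{ji})\circ\mathcal{K}$ on a spanning set of Capelli bitableaux, computing the left side via the virtual presentation and the $Ad_{gl(n)}$-equivariance of $\mathfrak{p}$ (Proposition \ref{rappresentazione aggiunta-BR}) and the right side via the derivation property of the polarizations, then matching the two substitution sums through Corollary \ref{K action}. The only (immaterial) difference is that the paper carries out this check on \emph{column} Capelli bitableaux, using Proposition \ref{K column} directly, whereas you use general Capelli bitableaux.
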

\begin{proof} We recall that the action of $e_{hk} \in gl(n)$ on $\mathbf{U}(gl(n))$ 
through the adjoint representation $Ad_{gl(n)}$ is implemented by the 
derivation $T_{hk}$ such that  $T_{hk}\big(e_{st}\big) = \delta_{ks} e_{it} - \delta_{ht} e_{sj}$.
From the  definition of column Capelli bitableau and Proposition \ref{rappresentazione aggiunta-BR}, we infer
$$
T_{hk} \Big(
\left[
\begin{array}{c}
i_1\\  i_2 \\ \vdots \\ i_h
\end{array}
\right| \left.
\begin{array}{c}
j_1\\  j_2 \\ \vdots \\ j_h
\end{array}
\right]
\Big) = \sum_{p=1}^h \ \delta_{k, i_p}
\left[
\begin{array}{c}
i_1 \\ \vdots \\ h \\ \vdots \\ i_h
\end{array}
\right| \left.
\begin{array}{c}
j_1\\ \vdots \\ j_p \\ \vdots \\ j_h
\end{array}
\right]
-
\sum_{p=1}^h \ \delta_{j_p, h}
\left[
\begin{array}{c}
i_1 \\ \vdots \\ i_p \\ \vdots \\ i_h
\end{array}
\right| \left.
\begin{array}{c}
j_1\\ \vdots \\ k \\ \vdots \\ j_h
\end{array}
\right].
$$
We recall that the action of $e_{hk}$ on ${\mathbb C}[M_{n,n}]$ 
through the adjoint representation $ad_{gl(n)}$ is implemented by the 
derivation $D^{\textit{l}}_{hk} - D^{\textit{r}}_{kh}$. Then
\begin{align*}
\big(D^{\textit{l}}_{hk}-D^{\textit{r}}_{kh} \big) \Big(
\left(
\begin{array}{c}
i_1\\  i_2 \\ \vdots \\ i_h
\end{array}
\right| \left.
\begin{array}{c}
j_1\\  j_2 \\ \vdots \\ j_h
\end{array}
\right)
\Big) &= \sum_{p=1}^h \ \delta_{k, i_p}
\left(
\begin{array}{c}
i_1 \\ \vdots \\ h \\ \vdots \\ i_h
\end{array}
\right| \left.
\begin{array}{c}
j_1\\ \vdots \\ j_p \\ \vdots \\ j_h
\end{array}
\right)
\\
&-\sum_{p=1}^h \ \delta_{j_p, h}
\left(
\begin{array}{c}
i_1 \\ \vdots \\ i_p \\ \vdots \\ i_h
\end{array}
\right| \left.
\begin{array}{c}
j_1\\ \vdots \\ k \\ \vdots \\ j_h
\end{array}
\right).
\end{align*}
Since column Capelli bitableaux span $\mathbf{U}(gl(n))$ and 
column  bitableaux span ${\mathbb C}[M_{n,n}]$, the assertion
follows from Proposition \ref{K column}.
\end{proof}
Since
$$
\boldsymbol{\zeta}(n) = \mathbf{U}(gl(n))^{Ad_{gl(n)}},
$$
the preceding Theorem implies:
\begin{corollary}\label{invariant} We have
$$
\mathcal{K} \Big[ \boldsymbol{\zeta}(n) \Big] = {\mathbb C}[M_{n,n}]^{ad_{gl(n)}}.
$$
\end{corollary}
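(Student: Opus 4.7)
The plan is to derive the corollary as a purely formal consequence of the two inputs supplied by the preceding results, namely: (i) Theorem \ref{BCKtheorem}, which gives that $\mathcal{K} : \mathbf{U}(gl(n)) \to {\mathbb C}[M_{n,n}]$ is a linear isomorphism, and (ii) Theorem \ref{equivariant}, which says that $\mathcal{K}$ intertwines $Ad_{gl(n)}$ on the domain with $ad_{gl(n)}$ on the codomain. Once these are in hand, the corollary reduces to the standard fact that an equivariant linear isomorphism restricts to a bijection between invariant subspaces, combined with the standing identification $\boldsymbol{\zeta}(n) = \mathbf{U}(gl(n))^{Ad_{gl(n)}}$ recalled just before the statement.

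Concretely, I would argue both inclusions directly. For the forward inclusion, take $\mathbf{P} \in \boldsymbol{\zeta}(n) = \mathbf{U}(gl(n))^{Ad_{gl(n)}}$. Then $T_{ij}(\mathbf{P}) = 0$ for every $e_{ij} \in gl(n)$, so by equivariance
$$
\bigl(D^{\textit{l}}_{ij} - D^{\textit{r}}_{ji}\bigr)\bigl(\mathcal{K}(\mathbf{P})\bigr) = \mathcal{K}\bigl(T_{ij}(\mathbf{P})\bigr) = 0,
$$
which is exactly the statement that $\mathcal{K}(\mathbf{P}) \in {\mathbb C}[M_{n,n}]^{ad_{gl(n)}}$. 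For the reverse inclusion, given $\mathbf{f} \in {\mathbb C}[M_{n,n}]^{ad_{gl(n)}}$, set $\mathbf{P} = \mathcal{K}^{-1}(\mathbf{f})$, well-defined because $\mathcal{K}$ is bijective. Equivariance then gives $\mathcal{K}(T_{ij}(\mathbf{P})) = (D^{\textit{l}}_{ij} - D^{\textit{r}}_{ji})(\mathbf{f}) = 0$, and injectivity of $\mathcal{K}$ forces $T_{ij}(\mathbf{P}) = 0$ for all $i,j$, so $\mathbf{P}$ is $Ad_{gl(n)}$-invariant and therefore central.

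There is no genuine obstacle at this step: all the substantive combinatorial work has already been discharged in Theorem \ref{equivariant}, where the action of the derivations $T_{hk}$ and $D^{\textit{l}}_{hk} - D^{\textit{r}}_{kh}$ on column Capelli bitableaux and column determinantal bitableaux was matched term by term through Proposition \ref{K column}. Once that intertwining is in place and $\mathcal{K}$ is known to be a bijection, the present corollary is a one-line formal consequence, and writing out the two inclusions above suffices.
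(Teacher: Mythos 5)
Your proof is correct and follows exactly the route the paper intends: the paper derives the corollary immediately from Theorem \ref{equivariant} together with the identification $\boldsymbol{\zeta}(n) = \mathbf{U}(gl(n))^{Ad_{gl(n)}}$, and your two explicit inclusions merely spell out the standard fact that an equivariant linear isomorphism restricts to a bijection between invariant subspaces. Nothing is missing.
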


In the left representation $\big( \rho^{\textit{l}}, {\mathbb C}[M_{n,n}] \big)$ 
(i.e. $\rho^{\textit{l}} :e_{ij} \mapsto D^{\textit{l}}_{ij}$),
standard  Young-Capelli bitableaux
$[S|\fbox{$T$}]$,  $sh(S) = sh(T) = \lambda \vdash k$,
act on right symmetrized bitableaux
$(U|\fbox{$V$})$, $sh(U) = sh(V) = \mu \vdash h$, in a quite remarkable way.
\begin{proposition} \emph{\cite{Brini2-BR}}\label{action}
We have:

\begin{itemize}
\item [--]
If $h < k$, the action is zero.

\item [--]
If $h = k$ and $\lambda \neq \mu$, the action is zero.

\item [--]
If $h = k$ and $\lambda = \mu$, the action is nondegenerate triangular
(with respect to a suitable linear order on standard tableaux of the same shape).
\end{itemize}
\end{proposition}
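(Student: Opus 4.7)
The plan is to compute the action of $[S|\fbox{$T$}]$ on $(U|\fbox{$V$})$ through the virtual presentation
\[
[S|\fbox{$T$}]=\mathfrak{p}\bigl(e_{S,C_\lambda}\,e_{C_\lambda,D_\lambda}\,e_{D_\lambda,T}\bigr),
\]
interpreted (right-to-left) as a composition of superpolarizations acting on $(U|\fbox{$V$})\in{\mathbb C}[M_{n,n}]\subset{\mathbb C}[M_{m_0|m_1+n,n}]$. The three cases will then fall out of distinct, but related, mechanisms of the superalgebraic calculus.

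For the first vanishing case ($h<k$), the rightmost factor $e_{D_\lambda,T}$ is a product of $k$ left superpolarizations $D^{\textit{l}}_{\beta,j}$ with $\beta\in A_1$ virtual and $j\in L$ proper; each acts on a monomial by converting a factor whose proper row index equals $j$ into one with virtual index $\beta$, and vanishes on monomials containing no such factor. Since $(U|\fbox{$V$})$ has exactly $h$ factors, at most $h$ successive superpolarizations of this type can be non-trivial; the remaining $k-h>0$ then annihilate the intermediate polynomial. For the second vanishing case ($h=k$ but $\lambda\neq\mu$), the middle factor $e_{C_\lambda,D_\lambda}$ creates odd variables $(\alpha|\,\cdot\,)$ (since $|(\alpha|j)|=1$), and the superpolarizations $D^{\textit{l}}_{\alpha,\beta}$ that build them pairwise anticommute; the composite $e_{C_\lambda,D_\lambda}\,e_{D_\lambda,T}$ therefore carries a built-in antisymmetrization along the occurrences of each repeated virtual $\alpha_i$ of $C_\lambda$, i.e.\ along the rows of shape $\lambda$. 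Combined with the explicit column-symmetrization in $V$ of shape $\mu$ built into $(U|\fbox{$V$})$ by its very definition (Eq.\ (\ref{symmetrized bitableau})), the classical Young-symmetrizer vanishing argument (two entries forced into both an antisymmetric $\lambda$-block and a symmetric $\mu$-block) yields zero unless $\lambda=\mu$.

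For the third case ($h=k$, $\lambda=\mu$), I would fix a linear order on the pairs $(U,V)$ of standard tableaux of shape $\lambda$ compatible with the standardization algorithm, and expand
\[
[S|\fbox{$T$}]\cdot(U|\fbox{$V$})=\sum_{U',V'}c_{U',V'}\,(U'|\fbox{$V'$})
\]
in the basis of standard right-symmetrized bitableaux (Proposition \ref{bases thms}). The leading term arises from the identity matching between the virtual symbol patterns produced by $e_{D_\lambda,T}$ and $e_{C_\lambda,D_\lambda}$ and the column/row structure of $V$; this matching contributes a coefficient that factors as a product of column-height factorials $\prod_j \widetilde{\lambda}_j!>0$, while every other matching produces a pair $(U',V')$ that is strictly smaller in the chosen order. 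Hence the matrix of the action on the standard basis is triangular with non-zero diagonal, and the action is nondegenerate.

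The main obstacle is precisely this third case. One must (i) pin down the leading matching unambiguously, (ii) verify its non-zero coefficient through the sign bookkeeping of the anticommuting superpolarizations $D^{\textit{l}}_{\alpha,\beta}$ and the multiplicities introduced by the repeated positive virtuals of $C_\lambda$, and (iii) show that every non-identity matching lies strictly below in the chosen order. This requires simultaneously invoking the superalgebraic version of the standard basis theorem, the determinantal/permanental straightening laws, and a careful combinatorial analysis of the Capelli devirtualization epimorphism $\mathfrak{p}$; the second vanishing case, while conceptually simpler, also depends on the same sign accounting for the fermionic $\alpha$'s.
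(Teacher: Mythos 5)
A preliminary remark: the paper itself gives no proof of Proposition \ref{action}; it states the result with the citation \cite{Brini2-BR} and explicitly defers ``for details and proof'' to \cite{Bri-BR}, Theorem $10.1$. So your proposal can only be measured against the strategy of those references, which is indeed the virtual-variable/superpolarization calculus you adopt. Within that framework your first case is essentially correct and complete: every factor of $e_{D_\lambda,T}$ is a superpolarization $D^{\textit{l}}_{\beta,j}$ that trades a proper left symbol for a virtual one, each monomial of $(U|\fbox{$V$})$ contains exactly $h$ proper factors and no virtual ones, so a composite of $k>h$ such polarizations must annihilate it.

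The other two cases contain genuine gaps. For $h=k$, $\lambda\neq\mu$: the claim that ``two entries are forced into both an antisymmetric $\lambda$-block and a symmetric $\mu$-block'' is not automatic for arbitrary distinct partitions of the same integer; the classical lemma produces such a pair only under a failure of dominance, say $\lambda\not\trianglerighteq\mu$. Since $\lambda\neq\mu$ only guarantees that dominance fails in at least one direction, the argument must be run both ways, and for that you need to exploit that \emph{each} side carries \emph{both} symmetries: $[S|\fbox{$T$}]$ has row antisymmetry (the odd $\alpha$'s of $C_\lambda$) \emph{and} column symmetry (the repeated $\beta$'s of $D_\lambda$), while $(U|\fbox{$V$})$ has column symmetry by Eq.\ (\ref{symmetrized bitableau}) \emph{and} row antisymmetry coming from the signed minors in each bitableau $(U|\overline{V})$. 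Your sketch pairs only one antisymmetry against one symmetry, so as written it yields vanishing only when the dominance comparison happens to go the right way. For the third case, everything that makes the statement nontrivial --- pinning down the leading matching, computing its coefficient (the value $\prod_j\widetilde{\lambda}_j!$ is asserted, not derived), and above all showing that every non-identity matching straightens to strictly smaller standard pairs --- is presented as a to-do list rather than an argument; this is precisely the content of \cite{Bri-BR}, Theorem $10.1$, and it requires the triangularity of the standardization (straightening) algorithm, not merely its existence via Proposition \ref{bases thms}. In short, the proposal is a correct road map in the spirit of the cited proof, but items two and three are not yet proved.
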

For details and proof, see \cite{Bri-BR} Theorem $10.1$. Clearly, similar results hold 
for the right and the adjoint representations.

\section{Laplace expansions}

\subsection{Laplace expansions in ${\mathbb C}[M_{n,n}]$}\label{LaplExp}

Recall that
$$
(i_1 i_2 \cdots i_h|j_1 j_2 \cdots j_h)
=
(-1)^{h \choose 2}  \ det[ (i_s|j_t) ]_{s,t =1, 2, \ldots, h}
\in {\mathbb C}[M_{n,n}],
$$
and, therefore, the biproduct $(i_1 i_2 \cdots i_h|j_1 j_2 \cdots j_h) \in {\mathbb C}[M_{n,n}]$ expands
into column bitableaux as follows:
\begin{align*}
(i_1 i_2 \cdots i_h|j_1 j_2 \cdots j_h) &=
\sum_{\sigma \in \mathbf{S}_h} \ (-1)^{|\sigma|} \left(
\begin{array}{c}
i_{\sigma(1)}\\  i_{\sigma(2)} \\ \vdots \\ i_{\sigma(h)}
\end{array}
\right| \left.
\begin{array}{c}
j_1\\  j_2 \\ \vdots \\ j_h
\end{array}
\right)
\\
&=
\sum_{\sigma \in \mathbf{S}_h} \ (-1)^{|\sigma|}  \left(
\begin{array}{c}
i_1\\  i_2 \\ \vdots \\ i_h
\end{array}
\right| \left.
\begin{array}{c}
j_{\sigma(1)}\\  j_{\sigma(2)} \\ \vdots \\ j_{\sigma(h)}
\end{array}
\right).
\end{align*}
Notice that, in the passage from monomials to column bitableaux,
the sign $(-1)^{h \choose 2}$ disappears.

Recall that
$$
(i_1 i_2 \cdots i_h|j_1 j_2 \cdots j_h)^*
=
 per[ (i_s|j_t) ]_{s,t =1, 2, \ldots, h}
\in {\mathbb C}[M_{n,n}],
$$
and, therefore, the *-biproduct $(i_1 i_2 \cdots i_h|j_1 j_2 \cdots j_h)^* \in {\mathbb C}[M_{n,n}]$ expands
into column *-bitableaux as follows:
\begin{align*}
(i_1 i_2 \cdots i_h|j_1 j_2 \cdots j_h)^* &=
\sum_{\sigma \in \mathbf{S}_h} \  \left(
\begin{array}{c}
i_{\sigma(1)}\\  i_{\sigma(2)} \\ \vdots \\ i_{\sigma(h)}
\end{array}
\right| \left.
\begin{array}{c}
j_1\\  j_2 \\ \vdots \\ j_h
\end{array}
\right)^*
\\
&=
\sum_{\sigma \in \mathbf{S}_h} \   \left(
\begin{array}{c}
i_1\\  i_2 \\ \vdots \\ i_h
\end{array}
\right| \left.
\begin{array}{c}
j_{\sigma(1)}\\  j_{\sigma(2)} \\ \vdots \\ j_{\sigma(h)}
\end{array}
\right)^*.
\end{align*}

The preceding arguments extend to  bitableaux and to  *-bitableaux of any shape $\lambda, \ \lambda_1 \leq n.$
Given the Young tableaux  
$$
S = \left(
\begin{array}{llllllllllllll}
i_{p_1}  \ldots    \ldots     \ldots     i_{p_{\lambda_1}} \\
i_{q_1}   \ldots  \ldots               i_{q_{\lambda_2}} \\
 \ldots  \ldots   \\
i_{r_1} \ldots i_{r_{\lambda_m}}
\end{array}
\right),
\quad
T = \left(
\begin{array}{llllllllllllll}
j_{s_1}  \ldots    \ldots     \ldots     j_{s_{\lambda_1}} \\
j_{t_1}   \ldots  \ldots               j_{t_{\lambda_2}} \\
 \ldots  \ldots   \\
j_{v_1} \ldots j_{v_{\lambda_m}}
\end{array}
\right).
$$
From a simple sign computation, it follows 
\begin{proposition}\label{exp1}
$$
(S|T) =
\sum_{\sigma_1, \ldots, \sigma_m } \ (-1)^{\sum_{k=1}^m \ |\sigma_k|} \
\left(
\begin{array}{c}
i_{p_{\sigma_1(1)}}\\   . \\ i_{p_{\sigma_1(\lambda_1)}} \\
\vdots   \\
i_{r_{\sigma_m(1)}}\\   . \\ i_{r_{\sigma_m(\lambda_m)}}
\end{array}
\right| \left.
\begin{array}{c}
j_{s_1}\\   . \\ j_{s_{\lambda_1}}  \\
\vdots \\
j_{v_1}\\   . \\ j_{v_{\lambda_m}}
\end{array}
\right),
$$
$$
\phantom{(S|T)} =
\sum_{\sigma_1, \ldots, \sigma_m } \ (-1)^{\sum_{k=1}^m \ |\sigma_k|} \
\left(
\begin{array}{c}
i_{p_1}\\   . \\ i_{p_{\lambda_1}}  \\
\vdots \\
i_{r_1}\\   . \\ i_{r_{\lambda_m}}
\end{array}
\right| \left.
\begin{array}{c}
j_{s_{\sigma_1(1)}}\\   . \\ j_{s_{\sigma_1(\lambda_1)}} \\
\vdots \\
j_{v_{\sigma_m(1)}}\\   . \\ j_{v_{\sigma_m(\lambda_m)}}
\end{array}
\right),
$$
where the multiple sums range over all permutations
$\sigma_1 \in \mathbf{S}_{\lambda_1}, \ldots, \sigma_m \in \mathbf{S}_{\lambda_m}.$
\end{proposition}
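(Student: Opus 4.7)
The plan is to derive the identity by starting from the definition of a Young bitableau as a signed product of row biproducts and then applying the single-row Laplace expansion of \S\ref{LaplExp} one row at a time. Writing $\omega_k, \varpi_k$ for the $k$-th rows of $S, T$ (so $\ell(\omega_k)=\ell(\varpi_k)=\lambda_k$), the defining equation (\ref{bitableau}) gives
$(S|T) = \mathbf{s}\,(\omega_1|\varpi_1)\cdots(\omega_m|\varpi_m)$
with $\mathbf{s}=(-1)^{\sum_{k<l}\lambda_k\lambda_l}$ by (\ref{crossing sign}). Applying the single-row expansion (the ``permuting the right word'' version of \S\ref{LaplExp}) to each factor $(\omega_k|\varpi_k)$ rewrites it as $\sum_{\sigma_k\in\mathbf{S}_{\lambda_k}}(-1)^{|\sigma_k|}$ times the length-$\lambda_k$ column bitableau with the $j$'s of row $k$ permuted by $\sigma_k$. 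Multiplying these expansions and distributing gives a sum over tuples $(\sigma_1,\ldots,\sigma_m)$, each summand being a product of column bitableaux, one per row, with sign $\mathbf{s}\cdot(-1)^{\sum_k|\sigma_k|}$.

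The next step is to identify each such product of length-$\lambda_k$ column bitableaux with the single length-$|\lambda|$ column bitableau obtained by vertically stacking them. By the monomial description (\ref{column monomial}), the row-$k$ column bitableau equals $(-1)^{\binom{\lambda_k}{2}}$ times the ordered product of its entry-wise scalars $(i|j)$, while the stacked column bitableau of shape $(1^{|\lambda|})$ equals $(-1)^{\binom{|\lambda|}{2}}$ times the same ordered product of scalars (since commutative scalars in ${\mathbb C}[M_{n,n}]$ can be reordered freely). Hence
$\prod_{k=1}^m(\text{row-}k\text{ column bitableau})=(-1)^{\sum_k\binom{\lambda_k}{2}-\binom{|\lambda|}{2}}\cdot(\text{stacked column bitableau})$.

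The only remaining task is to check that the accumulated prefactor $\mathbf{s}\cdot(-1)^{\sum_k\binom{\lambda_k}{2}-\binom{|\lambda|}{2}}$ collapses to $+1$; this is the step I expect to be the main obstacle, and it comes down to the combinatorial identity $\binom{|\lambda|}{2}=\sum_k\binom{\lambda_k}{2}+\sum_{k<l}\lambda_k\lambda_l$, obtained by iterating $\binom{a+b}{2}=\binom{a}{2}+\binom{b}{2}+ab$. Plugging in the explicit form $\mathbf{s}=(-1)^{\sum_{k<l}\lambda_k\lambda_l}$ produces cancellation and the prefactor becomes $1$, yielding the second equality of the proposition. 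The first equality follows by the same argument, applying the alternative form of the single-row expansion in \S\ref{LaplExp} (permuting the left word in each row), and relying on the symmetry $\det[(i_r|j_s)]=\det[(i_r|j_s)]^{\mathrm{t}}$ to see that both versions of the single-row expansion carry the same signs $(-1)^{|\sigma_k|}$.
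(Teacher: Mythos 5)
Your argument is correct and is exactly the ``simple sign computation'' that the paper invokes without writing out: the paper proves the single-row case in \S\ref{LaplExp} and asserts the general case, while you expand row by row and check that the crossing sign $\mathbf{s}=(-1)^{\sum_{k<l}\lambda_k\lambda_l}$ cancels against the factor $(-1)^{\sum_k\binom{\lambda_k}{2}-\binom{|\lambda|}{2}}$ arising from restacking the row columns, via $\binom{|\lambda|}{2}=\sum_k\binom{\lambda_k}{2}+\sum_{k<l}\lambda_k\lambda_l$. This is the same approach as the paper's, with the omitted details supplied; no gaps.
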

Notice that, in the expansions with respect to column bitableax, only the signs of permutations will remain.

Similarly, we have
\begin{proposition}\label{exp2}
$$
(S|T)^* =
\sum_{\sigma_1, \ldots, \sigma_m }  \
\left(
\begin{array}{c}
i_{p_{\sigma_1(1)}}\\   . \\ i_{p_{\sigma_1(\lambda_1)}} \\
\vdots \\
i_{r_{\sigma_m(1)}}\\   . \\ i_{r_{\sigma_m(\lambda_m)}}
\end{array}
\right| \left.
\begin{array}{c}
j_{s_1}\\   . \\ j_{s_{\lambda_1}}  \\
\vdots \\
j_{v_1}\\   . \\ j_{v_{\lambda_m}}
\end{array}
\right)^*
$$
$$
\phantom{(S|T)} =
\sum_{\sigma_1, \ldots, \sigma_m } \
\left(
\begin{array}{c}
i_{p_1}\\   . \\ i_{p_{\lambda_1}}  \\
\vdots \\
i_{r_1}\\   . \\ i_{r_{\lambda_m}}
\end{array}
\right| \left.
\begin{array}{c}
j_{s_{\sigma_1(1)}}\\   . \\ j_{s_{\sigma_1(\lambda_1)}} \\
\vdots \\
j_{v_{\sigma_m(1)}}\\   . \\ j_{v_{\sigma_m(\lambda_m)}}
\end{array}
\right)^*.
$$
\end{proposition}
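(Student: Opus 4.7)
The plan is to follow the template of Proposition \ref{exp1}, but exploit the fact that the permanental side is free of signs.

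First, I would start from the very definition of the permanental Young *-bitableau, equation (\ref{*-bitableau}):
$$
(S|T)^* = (\omega_1|\varpi_1)^* (\omega_2|\varpi_2)^* \cdots (\omega_m|\varpi_m)^*,
$$
where $\omega_k$ and $\varpi_k$ are the $k$-th rows of $S$ and $T$. Each factor is a permanent, and expanding the permanent row by row (equivalently, by the usual ``permutation of rows'' expression) gives, for each $k$,
$$
(\omega_k|\varpi_k)^* \;=\; \sum_{\sigma_k \in \mathbf{S}_{\lambda_k}} \; (i_{\sigma_k(1)}|j_1)(i_{\sigma_k(2)}|j_2)\cdots (i_{\sigma_k(\lambda_k)}|j_{\lambda_k}),
$$
with an analogous expression obtained by permuting the $j$'s instead (the permanent is invariant under both kinds of row/column relabelings, with no signs).

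Next, I would invoke the definition of column *-bitableau (\ref{column *-monomial}), which says that a column *-bitableau is literally the ordered product of its entries $(i_r|j_r)$ with \emph{no} sign. Hence each summand above is exactly a column *-bitableau of depth $\lambda_k$, and the product over rows on the right-hand side of the definition of $(S|T)^*$ becomes a multiple sum
$$
(S|T)^* \;=\; \sum_{\sigma_1,\ldots,\sigma_m} \; \prod_{k=1}^m \left(
\begin{array}{c} i_{p_{\sigma_k(1)}} \\ \vdots \\ i_{p_{\sigma_k(\lambda_k)}} \end{array}\,\Big|\,
\begin{array}{c} j_{s_1} \\ \vdots \\ j_{s_{\lambda_k}} \end{array}
\right)^{*},
$$
(notation adapted to each row). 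Using (\ref{column *-monomial}) once more, the product of consecutive column *-bitableaux is again a column *-bitableau obtained by \emph{stacking rows}, since all factors commute into the single monomial $\prod (i|j)$. This yields precisely the first displayed identity of the proposition; the second identity is obtained by the symmetric expansion of the permanent, permuting the right-hand entries instead.

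The potential obstacle in the determinantal analogue (Proposition \ref{exp1}) was the careful bookkeeping of the global sign $(-1)^{\binom{h}{2}}$ in the biproduct together with the crossing sign $\mathbf{s}$ in (\ref{crossing sign}), which conspired so that, after stacking rows, only the sign $\mathrm{sgn}(\sigma_k)$ survived. In the permanental case all of these signs are absent: both the *-biproduct and the column *-bitableau are plain monomials in the $(i|j)$'s, so the stacking of rows is literal commutative multiplication. The argument is therefore routine, and no sign computation is required.
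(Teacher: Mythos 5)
Your proof is correct and follows essentially the same route as the paper: the paper expands the *-biproduct (permanent) of a single row into column *-bitableaux and then observes that, since the Young *-bitableau is the plain (sign-free) product of its rows' *-biproducts and column *-bitableaux are literal monomials, the row expansions multiply out to the stacked column *-bitableaux of the full shape. Your observation that the permanental case requires no sign bookkeeping is exactly why the paper dispatches this proposition with ``Similarly.''
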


\subsection{Laplace expansions in $\mathbf{U}(gl(n))$}

Let $S$ and $T$ be the Young tableaux
$$
S = \left(
\begin{array}{llllllllllllll}
i_{p_1}  \ldots    \ldots     \ldots     i_{p_{\lambda_1}} \\
i_{q_1}   \ldots  \ldots               i_{q_{\lambda_2}} \\
 \ldots  \ldots   \\
i_{r_1} \ldots i_{r_{\lambda_m}}
\end{array}
\right),
\quad
T = \left(
\begin{array}{llllllllllllll}
j_{s_1}  \ldots    \ldots     \ldots     j_{s_{\lambda_1}} \\
j_{t_1}   \ldots  \ldots               j_{t_{\lambda_2}} \\
 \ldots  \ldots   \\
j_{v_1} \ldots j_{v_{\lambda_m}}
\end{array}
\right).
$$

Propositions \ref{exp1} and \ref{exp2} and Theorems \ref{operator B} and \ref{operator B^*}
imply to the following \emph{Laplace expansions} of Capelli bitableaux
into column Capelli bitableaux and of Capelli *-bitableaux
into column Capelli *-bitableaux.

\begin{corollary}\label{Exp1} We have
$$
[S|T] =
\sum_{\sigma_1, \ldots, \sigma_m } \ (-1)^{\sum_{k=1}^m \ |\sigma_k|} \
\left[
\begin{array}{c}
i_{p_{\sigma_1(1)}}\\   . \\ i_{p_{\sigma_1(\lambda_1)}} \\
\vdots   \\
i_{r_{\sigma_m(1)}}\\   . \\ i_{r_{\sigma_m(\lambda_m)}}
\end{array}
\right| \left.
\begin{array}{c}
j_{s_1}\\   . \\ j_{s_{\lambda_1}}  \\
\vdots \\
j_{v_1}\\   . \\ j_{v_{\lambda_m}}
\end{array}
\right]
$$
$$
\phantom{[S|T]} =
\sum_{\sigma_1, \ldots, \sigma_m } \ (-1)^{\sum_{k=1}^m \ |\sigma_k|} \
\left[
\begin{array}{c}
i_{p_1}\\   . \\ i_{p_{\lambda_1}}  \\
\vdots \\
i_{r_1}\\   . \\ i_{r_{\lambda_m}}
\end{array}
\right| \left.
\begin{array}{c}
j_{s_{\sigma_1(1)}}\\   . \\ j_{s_{\sigma_1(\lambda_1)}} \\
\vdots \\
j_{v_{\sigma_m(1)}}\\   . \\ j_{v_{\sigma_m(\lambda_m)}}
\end{array}
\right].
$$
\end{corollary}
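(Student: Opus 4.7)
The plan is to obtain Corollary \ref{Exp1} directly from the polynomial-algebra Laplace expansion of Proposition \ref{exp1} by transport along the linear isomorphism $\mathcal{B} = \mathcal{K}^{-1}$. First, I would invoke Proposition \ref{exp1}, which expresses the determinantal bitableau $(S|T) \in {\mathbb C}[M_{n,n}]$ as a signed sum, indexed by tuples $(\sigma_1,\dots,\sigma_m) \in \mathbf{S}_{\lambda_1} \times \cdots \times \mathbf{S}_{\lambda_m}$, of \emph{column} bitableaux obtained by permuting entries row-by-row (in $S$, respectively in $T$).

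Second, I would apply the linear map $\mathcal{B} : {\mathbb C}[M_{n,n}] \to \mathbf{U}(gl(n))$ of Theorem \ref{operator B} to both sides of that identity. On the left-hand side, $\mathcal{B}(S|T) = [S|T]$ by definition. On the right-hand side, each summand is a column bitableau of depth $\lambda_1 + \cdots + \lambda_m$, and equation (\ref{B column}) tells us that $\mathcal{B}$ sends each such column bitableau to the column Capelli bitableau obtained by replacing the round brackets by square brackets (with the very same entries in the very same positions). Since $\mathcal{B}$ is linear and the signs $(-1)^{\sum_k |\sigma_k|}$ are scalars, the signed sum on the right transports verbatim.

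This immediately yields the two displayed expansions of Corollary \ref{Exp1} (one permuting the rows on the left side of the bitableau, one on the right). The analogous starred version follows by the same mechanism with $\mathcal{B}^*$, Proposition \ref{exp2}, and equation (\ref{B^* column}) in place of their unstarred counterparts. There is essentially no obstacle here: the content of the corollary lies entirely in Propositions \ref{exp1} and \ref{exp2}, and the nontrivial bridge between polynomial algebra and enveloping algebra has already been built in Theorem \ref{BCKtheorem}. The only minor bookkeeping point worth flagging is that the sign $(-1)^{h \choose 2}$ that appears when one converts a biproduct minor to a product of generators (cf.\ (\ref{column monomial})) has already been absorbed into the definition of column bitableaux on both sides of $\mathcal{B}$, so no stray signs are introduced by the transport.
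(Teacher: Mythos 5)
Your argument is correct and coincides with the paper's own derivation: Corollary \ref{Exp1} is obtained there precisely by applying the linear map $\mathcal{B}$ of Theorem \ref{operator B} to the polynomial Laplace expansion of Proposition \ref{exp1}, using that $\mathcal{B}$ sends every column bitableau to the corresponding column Capelli bitableau (Eq. (\ref{B column})). The only superfluous ingredient in your write-up is the appeal to $\mathcal{B}=\mathcal{K}^{-1}$ and Theorem \ref{BCKtheorem}: well-definedness and linearity of $\mathcal{B}$ from Theorem \ref{operator B} already suffice, so no invertibility is needed.
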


\begin{corollary}\label{Exp2} We have
$$
[S|T]^* =
\sum_{\sigma_1, \ldots, \sigma_m }  \
\left[
\begin{array}{c}
i_{p_{\sigma_1(1)}}\\   . \\ i_{p_{\sigma_1(\lambda_1)}} \\
\vdots   \\
i_{r_{\sigma_m(1)}}\\   . \\ i_{r_{\sigma_m(\lambda_m)}}
\end{array}
\right| \left.
\begin{array}{c}
j_{s_1}\\   . \\ j_{s_{\lambda_1}}  \\
\vdots \\
j_{v_1}\\   . \\ j_{v_{\lambda_m}}
\end{array}
\right]^*
$$
$$
\phantom{[S|T]^*} =
\sum_{\sigma_1, \ldots, \sigma_m }  \
\left[
\begin{array}{c}
i_{p_1}\\   . \\ i_{p_{\lambda_1}}  \\
\vdots \\
i_{r_1}\\   . \\ i_{r_{\lambda_m}}
\end{array}
\right| \left.
\begin{array}{c}
j_{s_{\sigma_1(1)}}\\   . \\ j_{s_{\sigma_1(\lambda_1)}} \\
\vdots \\
j_{v_{\sigma_m(1)}}\\   . \\ j_{v_{\sigma_m(\lambda_m)}}
\end{array}
\right]^*.
$$
\end{corollary}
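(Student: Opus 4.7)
The plan is to obtain Corollary \ref{Exp2} by transporting the polynomial identity of Proposition \ref{exp2} through the linear isomorphism $\mathcal{B}^{*} = \mathcal{K}^{-1}$. Since column *-bitableaux are simply products of biproducts (Eq.\ \ref{column *-monomial}) and $\mathcal{B}^{*}$ is well-defined on all of $\mathbb{C}[M_{n,n}]$ (Theorem \ref{operator B^*} together with Theorem \ref{BCKtheorem}), the passage from the polynomial side to the enveloping algebra side is essentially a formal one.

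First, I would read Proposition \ref{exp2} as expressing $(S|T)^{*}$ as an explicit linear combination of column *-bitableaux $(W|Z)^{*}$, indexed by tuples $(\sigma_{1},\ldots,\sigma_{m}) \in \mathbf{S}_{\lambda_{1}}\times\cdots\times\mathbf{S}_{\lambda_{m}}$, with coefficient $+1$ throughout (since the permanental expansion carries no signs). Applying the linear operator $\mathcal{B}^{*}$ to both sides then yields, on the left, the Capelli *-bitableau $[S|T]^{*}$ by Theorem \ref{operator B^*}; and on the right, by linearity of $\mathcal{B}^{*}$, the identical formal combination with every column *-bitableau $(W|Z)^{*}$ replaced by its image $\mathcal{B}^{*}\big((W|Z)^{*}\big)$.

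Next, I would invoke Eq.\ \ref{B^* column}, which says exactly that $\mathcal{B}^{*}\big((W|Z)^{*}\big) = [W|Z]^{*}$ for every column *-bitableau of any depth $h$. Substituting this into the expansion obtained in the previous step identifies each summand on the right with the corresponding column Capelli *-bitableau in $\mathbf{U}(gl(n))$, producing precisely the two expansions in the statement of Corollary \ref{Exp2}.

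There is no genuine obstacle: the argument is a transport of structure under the isomorphism $\mathcal{B}^{*}$. The only point worth double-checking is that Eq.\ \ref{B^* column} applies uniformly to the columns of depth $N = \lambda_{1}+\cdots+\lambda_{m}$ that arise on the right-hand side of Proposition \ref{exp2}; this is immediate because Eq.\ \ref{B^* column} is stated for arbitrary depth $h$. Corollary \ref{Exp1} is proved by the same template with $\mathcal{B}$ in place of $\mathcal{B}^{*}$ and Eq.\ \ref{B column} in place of Eq.\ \ref{B^* column}; the sign factor $(-1)^{\binom{h}{2}}$ relating column determinantal bitableaux to products of biproducts appears on both sides of the isomorphism and cancels, so only the permutation signs $(-1)^{|\sigma_{k}|}$ survive, matching the stated form.
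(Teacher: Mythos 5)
Your proposal is correct and follows essentially the same route as the paper: the authors also obtain Corollary \ref{Exp2} by applying the linear map $\mathcal{B}^*$ of Theorem \ref{operator B^*} to the permanental Laplace expansion of Proposition \ref{exp2}, identifying each column *-bitableau with its column Capelli *-bitableau image as in Eq.\ (\ref{B^* column}). No gap to report.
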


By combining the expansions of Corollaries \ref{Exp1} and \ref{Exp2} with the results of
Proposition \ref{column exp}, one gets  explicit expansions of Capelli bitableaux 
and of Capelli *-bitableaux as elements of $\mathbf{U}(gl(n))$.

\begin{example}\label{excapbit} The Capelli bitableau (of shape $\lambda = (2, 2)$) 
\begin{equation}\label{excapbit display}
\left[
\begin{array}{ccc}
1 & 2
\\
2 & 4
\end{array}
\right| \left.
\begin{array}{ccc}
2 & 3
\\
3 & 4
\end{array}
\right]
 \in \mathbf{U}(gl(4))
\end{equation}
equals 
$$
\left[
\begin{array}{ccc}
1 \\ 2
\\ 2 \\ 4
\end{array}
\right| \left.
\begin{array}{ccc}
2 \\ 3
\\ 3 \\ 4
\end{array}
\right]
-
\left[
\begin{array}{ccc}
1 \\ 2
\\ 2 \\ 4
\end{array}
\right| \left.
\begin{array}{ccc}
3 \\ 2
\\ 3 \\ 4
\end{array}
\right]
-
\left[
\begin{array}{ccc}
1 \\ 2
\\ 2 \\ 4
\end{array}
\right| \left.
\begin{array}{ccc}
2 \\ 3
\\ 4 \\ 3
\end{array}
\right]
+
\left[
\begin{array}{ccc}
1 \\ 2
\\ 2 \\ 4
\end{array}
\right| \left.
\begin{array}{ccc}
3 \\ 2
\\ 4 \\ 3
\end{array}
\right],
$$
where
\begin{align*}
& \left[
\begin{array}{ccc}
1 \\ 2
\\ 2 \\ 4
\end{array}
\right| \left.
\begin{array}{ccc}
2 \\ 3
\\ 3 \\ 4
\end{array}
\right] =
\ e_{12}e_{23}e_{23}e_{44} - 2e_{13}e_{23}e_{44},
\\
& \left[
\begin{array}{ccc}
1 \\ 2
\\ 2 \\ 4
\end{array}
\right| \left.
\begin{array}{ccc}
3 \\ 2
\\ 3 \\ 4
\end{array}
\right] =
\ e_{13}e_{22}e_{23}e_{44} - e_{13}e_{23}e_{44},
\\
& \left[
\begin{array}{ccc}
1 \\ 2
\\ 2 \\ 4
\end{array}
\right| \left.
\begin{array}{ccc}
2 \\ 3
\\ 4 \\ 3
\end{array}
\right] =
\ e_{12}e_{23}e_{24}e_{43} - e_{12}e_{23}e_{23} - e_{13}e_{24}e_{43}
+ e_{13}e_{23} - e_{23}e_{14}e_{43} + e_{23}e_{13},
\\
& \left[
\begin{array}{ccc}
1 \\ 2
\\ 2 \\ 4
\end{array}
\right| \left.
\begin{array}{ccc}
3 \\ 2
\\ 4 \\ 3
\end{array}
\right] =
\  e_{13}e_{22}e_{24}e_{43} - e_{13}e_{22}e_{23} - e_{13}e_{24}e_{43}
+ e_{13}e_{23}.
\end{align*}

This example can be used to enlighten the difference between the PBW Theorem and  Theorem \ref{BCKtheorem}.

The PBW Theorem establishes an isomorphism $\phi$ from the \emph{graded algebra}
$$
Gr \left[ \mathbf{U}(gl(n)) \right] = \bigoplus_{h \in \mathbb{Z}^+} \ 
\frac {\mathbf{U}^{(h)}(gl(n))} {\mathbf{U}^{(h-1)}(gl(n))}
$$
associated to the \emph{filtered} algebra $\mathbf{U}(gl(n))$ to the algebra
$\mathbf{Sym}(gl(n)) \cong {\mathbb C}[M_{n,n}] $. Clearly, the isomorphism $\phi$ maps the
projection to $Gr \left[ \mathbf{U}(gl(n)) \right]$ of the
Capelli bitableau (\ref{excapbit display}) 
- \emph{as an element of the quotient space}
$\frac {\mathbf{U}^{(4)}(gl(4))} {\mathbf{U}^{(3)}(gl(4))}$ -
to  the product determinants
\begin{equation}\label{proddet}
\mathbf{det}
\left(
\begin{array}{ccc}
(1|2) & (1|3)
\\
(2|2) & (2|3)
\end{array}
\right)
\times
\mathbf{det}
\left(
\begin{array}{ccc}
(2|3) & (2|4)
\\
(4|3) & (4|4)
\end{array}
\right)  \in {\mathbb C}[M_{4,4}].
\end{equation}

The Koszul isomorphism $\mathcal{K}$ (injectively) maps the  Capelli bitableau (\ref{excapbit display})
- \emph{as an element of $\mathbf{U}(gl(4))$} - to the product of determinants (\ref{proddet}).
Similarly,  the isomorphism $\mathcal{K}$ maps the Capelli *-bitableau
$$
\left[
\begin{array}{ccc}
1 & 2
\\
2 & 4
\end{array}
\right| \left.
\begin{array}{ccc}
2 & 3
\\
3 & 4
\end{array}
\right]^* \in \mathbf{U}(gl(4))
$$
to the product permanents
$$
\mathbf{per}
\left(
\begin{array}{ccc}
(1|2) & (1|3)
\\
(2|2) & (2|3)
\end{array}
\right)
\times
\mathbf{per}
\left(
\begin{array}{ccc}
(2|3) & (2|4)
\\
(4|3) & (4|4)
\end{array}
\right)  \in {\mathbb C}[M_{4,4}].
$$
\end{example}\qed

In the following, we will discuss some implications of Corollary \ref{invariant}.

\begin{proposition} \emph{(Koszul \cite{Koszul-BR})} \label{CapDet} Consider the row Capelli bitableau
$$
[n \cdots 2 1 |12 \cdots n] \in \mathbf{U}(gl(n)).
$$
We have:
\begin{enumerate}

\item

\begin{align*}
[n \cdots 2 1 |12 \cdots n] =
\mathbf{cdet}
\left(
\begin{array}{cccc}
 e_{1 1}+(n-1) & e_{1 2} & \ldots  & e_{1 n} \\
 e_{2 1} & e_{2 2}+(n-2) & \ldots  & e_{2 n}\\
 \vdots  &    \vdots                            & \vdots &  \\
e_{n 1} & e_{n 2} & \ldots & e_{n n}\\
 \end{array}
 \right),
\end{align*}
the \emph{Capelli column determinant}\footnote{The symbol $\mathbf{cdet}$
denotes the column determinat of a matrix $A = [a_{ij}]$ with noncommutative entries:
$\mathbf{cdet} (A) = \sum_{\sigma} \ (-1)^{|\sigma|}  \ a_{\sigma(1), 1}a_{\sigma(2), 2} \cdots a_{\sigma(n), n}.$} in $\mathbf{U}(gl(n))$.

\item

\begin{align*}
\mathcal{K} \left( [n \cdots 2 1 |12 \cdots n] \right) =&
 \
 \mathbf{det}
 \left(
 \begin{array}{ccc}
 (1|1) & \ldots & (1|n) \\
 \vdots  &       & \vdots \\
 (n|1) & \ldots & (n|n) \\
 \end{array}
 \right) \in {\mathbb C}[M_{n,n}].
 \end{align*}

\end{enumerate}

\end{proposition}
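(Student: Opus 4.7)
My plan is to dispatch Part (2) first, since it follows directly from the established properties of the Koszul map, and then address Part (1), which is the substantive computation.

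For Part (2), I would invoke Corollary \ref{K action}: $\mathcal{K}([S|T]) = (S|T)$ with $S = (n, n-1, \ldots, 1)$ and $T = (1, 2, \ldots, n)$ interpreted as single-row Young tableaux. Unfolding the definition of the biproduct yields $(S|T) = (-1)^{\binom{n}{2}} \det\bigl[(n-r+1\,|\,s)\bigr]_{r,s=1,\ldots,n}$. The row permutation $r \mapsto n-r+1$ has sign $(-1)^{\binom{n}{2}}$, so the two signs cancel, producing $\det\bigl[(i|j)\bigr]_{i,j=1,\ldots,n}$.

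For Part (1), the plan is to start from the virtual presentation. Since the Coderuyts tableau $C_{(n)}$ of shape $(n)$ is the single row $(\alpha_1, \alpha_1, \ldots, \alpha_1)$ of length $n$ in a single positive virtual symbol, we have
$$[n \cdots 21 \,|\, 12 \cdots n] = \mathfrak{p}\bigl(e_{n,\alpha_1} e_{n-1,\alpha_1} \cdots e_{1,\alpha_1} \cdot e_{\alpha_1,1} e_{\alpha_1,2} \cdots e_{\alpha_1,n}\bigr).$$
Each $e_{k,\alpha_1}$ is $\mathbb{Z}_2$-odd and their pairwise supercommutators vanish, so the left block is a wedge-like product of anticommuting elements; the same holds for the right block. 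To extract the column-determinant form, I would move each $e_{\alpha_1,k}$ leftward past the odd block via the fundamental identity
$$e_{i,\alpha_1} e_{\alpha_1,j} + e_{\alpha_1,j} e_{i,\alpha_1} = e_{ij} + \delta_{ij}\, e_{\alpha_1,\alpha_1}.$$
The principal $e_{ij}$ terms aggregate into the column-determinant expansion $\sum_\sigma (-1)^{|\sigma|} e_{\sigma(1),1} \cdots e_{\sigma(n),n}$; the $\delta_{ij}\, e_{\alpha_1,\alpha_1}$ corrections, after being commuted past the remaining odd virtual factors (noting that $[e_{\alpha_1,\alpha_1}, e_{k,\alpha_1}] = -e_{k,\alpha_1}$ acts as a counting operator on the virtual degree), combine to produce the integer diagonal shifts $(n-i)$ attached to $e_{ii}$ in column $i$.

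The main obstacle lies in Part (1): controlling the signs generated by the anticommutations and verifying that the accumulated $e_{\alpha_1,\alpha_1}$ corrections yield exactly the prescribed shift $(n-i)$ on the diagonal. A potentially smoother alternative is to apply the Laplace expansion of Corollary \ref{Exp1} to rewrite $[n \cdots 21 \,|\, 12 \cdots n]$ as a signed sum of column Capelli bitableaux indexed by $\sigma \in \mathbf{S}_n$, and then invoke the recursion of Proposition \ref{column exp} — whose principal term $(-1)^{h-1} e_{i_1 j_1}[\cdots]$ together with the $\delta$-corrections $(-1)^{h-2}\sum_k \delta_{i_k j_1}[\cdots]$ structurally mirrors a cofactor expansion of a matrix with diagonal shifts — to reconstruct the Capelli column determinant directly.
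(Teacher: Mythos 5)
Your Part (2) is exactly the paper's argument (Corollary \ref{K action} applied to the single--row bitableau, plus the cancellation of $(-1)^{\binom{n}{2}}$ against the sign of the row--reversal), so nothing to add there. For Part (1) you propose two routes, and it is your ``alternative'' that coincides with the paper's own proof: the paper expands $[n \cdots 21\,|\,12\cdots n]$ via the Laplace expansion into $\sum_{\sigma\in\mathbf{S}_n}(-1)^{|\sigma|}$ times column Capelli bitableaux and then iterates the recursion of Proposition \ref{column exp}. Your primary route --- working directly on the virtual presentation $\mathfrak{p}\bigl(e_{n,\alpha_1}\cdots e_{1,\alpha_1}e_{\alpha_1,1}\cdots e_{\alpha_1,n}\bigr)$ --- is essentially Koszul's original computation; your key identities $e_{i,\alpha_1}e_{\alpha_1,j}+e_{\alpha_1,j}e_{i,\alpha_1}=e_{ij}+\delta_{ij}e_{\alpha_1,\alpha_1}$ and $[e_{\alpha_1,\alpha_1},e_{k,\alpha_1}]=-e_{k,\alpha_1}$ are both correct, and the terms in which an $e_{\alpha_1,j}$ escapes to the far left are indeed killed by $\mathfrak{p}$, being irregular expressions. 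So both routes are sound in principle; the second buys you a cleaner argument because all the delicate sign bookkeeping has already been absorbed once and for all into Proposition \ref{column exp}.

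The step you have not supplied, in either route, is the one that actually produces the diagonal shifts $(n-i)$, and it is the only nontrivial point of the proof. In the paper's route, applying Proposition \ref{column exp} to the column bitableau indexed by $\sigma$ yields a principal term $(-1)^{n-1}e_{\sigma(n)1}[\cdots]$ together with $n-1$ correction terms $(-1)^{n-2}\delta_{\sigma(n-k+1),1}[\cdots]$, $k=2,\dots,n$; these corrections do not individually ``mirror a cofactor expansion with shifts''. Rather, for each fixed $k$ the correction bitableau is, by row--commutativity, the reduced bitableau attached to $\sigma'=\sigma\circ(n,\,n-k+1)$, and reindexing the outer sum over $\mathbf{S}_n$ by $\sigma\mapsto\sigma'$ reverses the sign of the permutation, turning $(-1)^{n-2}$ into $(-1)^{n-1}$ and collapsing all $n-1$ corrections onto the single term $(n-1)\delta_{\sigma(n)1}$ multiplying the same reduced bitableau as the principal term; iterating gives $(n-2)$ in the second column, and so on. You should either carry out this reindexing, or, in your primary route, perform the analogous count of the factors past which each residual $e_{\alpha_1,\alpha_1}$ must be commuted before the leftover word becomes irregular and is annihilated by $\mathfrak{p}$. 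As written, the appearance of exactly $(n-i)$ is asserted rather than derived.
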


\begin{proof}
We have

\begin{align*}
[n \cdots 2 1 &|12 \cdots n] =
\sum_{\sigma \in \mathbf{S}_n} \ (-1)^{|\sigma|}
\left[
\begin{array}{c}
\sigma(n) \\ \sigma(n-1) \\ \vdots \\ \sigma(1)
\end{array}
\right| \left.
\begin{array}{c}
1 \\ 2 \\ \vdots \\ n
\end{array}
\right]
\\
=&
\sum_{\sigma \in \mathbf{S}_n} \ (-1)^{|\sigma|} \times
\\
& \ 	\Big( (-1)^{n-1} \ e_{\sigma(n) 1}
\left[
\begin{array}{c}
\sigma(n-1) \\ \sigma(n-2) \\ \vdots \\ \sigma(1)
\end{array}
\right| \left.
\begin{array}{c}
2 \\ 3 \\ \vdots \\ n
\end{array}
\right]
\\
& + (-1)^{n-2}
\sum_{k=2}^{n} \ \delta_{\sigma(n-k+1) 1}
\left[
\begin{array}{c}
\sigma(n-1) \\ \vdots \\ \sigma(n) \\ \vdots \\ \sigma(1)
\end{array}
\right| \left.
\begin{array}{c}
2 \\  \vdots \\ k \\  \vdots \\ n
\end{array}
\right] \Big)
\\
=&
\ (-1)^{n-1} \
\sum_{\sigma \in \mathbf{S}_n} \ (-1)^{|\sigma|}
\left(e_{\sigma(n) 1} + (n-1) \delta_{\sigma(n) 1} \right)
\left[
\begin{array}{c}
\sigma(n-1) \\ \sigma(n-2) \\ \vdots \\ \sigma(1)
\end{array}
\right| \left.
\begin{array}{c}
2 \\ 3 \\ \vdots \\ n
\end{array}
\right],
\end{align*}
from  Proposition \ref{column exp}.

By iterating the same argument,
\begin{align*}
&[n \cdots 2 1 |12 \cdots n] =
\\
=&
\ (-1)^{n \choose 2} \times
\\
&
\sum_{\sigma \in \mathbf{S}_n} \ (-1)^{|\sigma|}
\left(e_{\sigma(n) 1} + (n-1) \delta_{\sigma(n) 1} \right)
\left(e_{\sigma(n-1) 2} + (n-2) \delta_{\sigma(n-1) 2} \right)
\cdots
e_{\sigma(1) n}
\\
=&
\sum_{\tau \in \mathbf{S}_n} \ (-1)^{|\tau|}
\left(e_{\tau(1) 1} + (n-1) \delta_{\tau(1) 1} \right)
\left(e_{\tau(2) 2} + (n-2) \delta_{\tau(2) 2} \right)
\cdots
e_{\tau(n) n}
\\
=&
\ \mathbf{cdet}
\left(
\begin{array}{cccc}
 e_{1 1}+(n-1) & e_{1 2} & \ldots  & e_{1 n} \\
 e_{2 1} & e_{2 2}+(n-2) & \ldots  & e_{2 n}\\
 \vdots  &    \vdots                            & \vdots &  \\
e_{n 1} & e_{n 2} & \ldots & e_{n n}\\
 \end{array}
 \right) \in \mathbf{U}(gl(n)),
\end{align*}
the  Capelli column determinant in $\mathbf{U}(gl(n))$.

Then
$$
\mathcal{K}   \Big(
\mathbf{cdet}
\left(
\begin{array} {cccc}
e_{1 1}+(n-1) & e_{1 2}       & \ldots  & e_{1 n} \\
 e_{2 1}      & e_{2 2}+(n-2) & \ldots  & e_{2 n} \\
 \vdots       &    \vdots     & \vdots  &  \vdots \\
e_{n 1}       & e_{n 2}       & \ldots  & e_{n n} \\
\end{array}
\right) \Big)
= \ \mathcal{K} \left( [n \cdots 2 1 |12 \cdots n] \right),
$$
that equals
$$
(n \cdots 2 1 |12 \cdots n) 
= \
 \mathbf{det}
 \left(
 \begin{array}{ccc}
 (1|1) & \ldots & (1|n) \\
 \vdots&        & \vdots \\
 (n|1) & \ldots & (n|n) \\
 \end{array}
 \right)  \in {\mathbb C}[M_{n,n}],
$$
by Corollary \ref{K action}.
\end{proof}

In the enveloping algebra $\mathbf{U}(gl(n))$, given any integer $k = 1, 2, \ldots, n,$ consider the 
$k-$th \emph{Capelli element}:
\begin{equation}\label{k Capelli}
\mathbf{H}_k(n)  = \
 \sum_{1 \leq i_1 < \cdots < i_k \leq n} \ [ i_k \cdots i_2 i_1 | i_1 i_2 \cdots i_k ].
\end{equation}

By the same argument of Proposition \ref{CapDet},
$$
\mathbf{H}_k(n)  = \ \sum_{1 \leq i_1 < \cdots < i_k \leq n}  \mathbf{cdet}\left(
 \begin{array}{cccc}
 e_{{i_1},{i_1}}+(k-1) & e_{{i_1},{i_2}} & \ldots  & e_{{i_1},{i_k}} \\
 e_{{i_2},{i_1}} & e_{{i_2},{i_2}}+(k-2) & \ldots  & e_{{i_2},{i_k}}\\
 \vdots          &      \vdots                     &     \vdots & \\
 e_{{i_k},{i_1}} & e_{{i_k},{i_2}}       & \ldots  & e_{{i_k},{i_k}}\\
 \end{array}
 \right),
$$
and the operator $\mathcal{K}$ maps $\mathbf{H}_k(n)$
to the polynomial
\begin{align*}
\mathbf{h}_k(n) =& \ \sum_{1 \leq i_1 < \cdots < i_k \leq n} \  ( i_k \cdots i_2 i_1 | i_1 i_2 \cdots i_k )
\\
=& \ \sum_{1 \leq i_1 < \cdots < i_k \leq n}  \mathbf{det}\left(
 \begin{array}{ccc}
(i_1|i_1) & \ldots  & (i_1|i_k) \\
\vdots   &          & \vdots    \\
(i_k|i_1) & \ldots  & (i_k|i_k) \\
 \end{array}
\right) \in {\mathbb C}[M_{n,n}].
\end{align*}

Notice that the polynomials $\mathbf{h}_k(n)$'s  appear as coefficients (in ${\mathbb C}[M_{n,n}]$)
of the characteristic polynomial:
$$
P_{M_{n,n}}(t) = det \big( tI - M_{n,n} \big) =
t^n + 	\sum_{i=1}^n \ (-1)^i \ \mathbf{h}_i(n) \ t^{n-i}.
$$

Clearly,  $\mathbf{h}_k(n)$ is $ad_{gl(n)}-$invariant in
${\mathbb C}[M_{n,n}]$ and, therefore,   $\mathbf{H}_k(n)$ is a
\emph{central element} of the enveloping algebra $\mathbf{U}(gl(n))$.

In passing we recall Capelli's Theorem (\cite{Cap1-BR} and \cite{Cap2-BR}, see also \cite{Brini4-BR}):
\begin{proposition}
$$
\boldsymbol{\zeta}(n) = {\mathbb C} \big[ \mathbf{H}_1(n), \mathbf{H}_2(n), \ldots, \mathbf{H}_n(n) \big].
$$
Moreover, the $\mathbf{H}_k(n)$'s are algebraically independent.
\end{proposition}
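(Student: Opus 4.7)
The plan is to reduce everything, via the Koszul isomorphism and the associated-graded machinery, to the classical description of the conjugation invariants of the generic matrix. The centrality of each $\mathbf{H}_k(n)$ and the equality $\mathcal{K}(\mathbf{H}_k(n)) = \mathbf{h}_k(n)$ have already been recorded in the paragraph preceding the statement, using Corollary \ref{invariant}. What remains is to establish (a) that the $\mathbf{H}_k(n)$ generate $\boldsymbol{\zeta}(n)$ as an algebra, and (b) that they are algebraically independent.

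The classical input I would invoke is the polynomial presentation
\[
{\mathbb C}[M_{n,n}]^{ad_{gl(n)}} = {\mathbb C}[\mathbf{h}_1(n), \ldots, \mathbf{h}_n(n)],
\]
with the generators algebraically independent: the algebra of polynomial functions on $M_{n,n}$ invariant under the adjoint/conjugation action is freely generated by the coefficients of the characteristic polynomial. In characteristic zero the infinitesimal and global invariants coincide, and a symmetric-function argument on the generic diagonal matrix supplies the proof. Since $\mathcal{K}$ is only a \emph{linear} (not multiplicative) isomorphism, Corollary \ref{invariant} alone tells us only that $\boldsymbol{\zeta}(n)$ and ${\mathbb C}[\mathbf{h}_1(n), \ldots, \mathbf{h}_n(n)]$ have matching dimensions in each filtered degree; to extract an algebra statement one must pass through the PBW filtration on $\mathbf{U}(gl(n))$. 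Inspection of the Capelli column-determinant formula in Proposition \ref{CapDet}, applied minor-by-minor to each $k$-subset $\{i_1 < \cdots < i_k\}$, shows that $\mathbf{H}_k(n)$ has filtered degree exactly $k$, and that its principal symbol in $Gr^k[\mathbf{U}(gl(n))] \subset \mathbf{Sym}(gl(n)) \cong {\mathbb C}[M_{n,n}]$ is precisely $\mathbf{h}_k(n)$, because each of the diagonal scalar shifts $(k-1), (k-2), \ldots$ strips off one generator and hence contributes only to strictly lower filtered degree.

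With these pieces in hand, the two assertions follow from standard filtration arguments. For algebraic independence: any nonzero relation $P(\mathbf{H}_1(n), \ldots, \mathbf{H}_n(n)) = 0$, graded by $\deg \mathbf{H}_k = k$, would yield $P^{\mathrm{top}}(\mathbf{h}_1(n), \ldots, \mathbf{h}_n(n)) = 0$ in ${\mathbb C}[M_{n,n}]$ on passage to top-weighted principal symbols, forcing $P^{\mathrm{top}} = 0$ and, by iteration, $P = 0$. For generation: given $\mathbf{z} \in \boldsymbol{\zeta}(n)$ of filtered degree $d$, its principal symbol is an $ad_{gl(n)}$-invariant of weighted degree $d$ in $\mathbf{Sym}(gl(n))$, hence a weighted-homogeneous polynomial $Q(\mathbf{h}_1(n), \ldots, \mathbf{h}_n(n))$; the central element $\mathbf{z} - Q(\mathbf{H}_1(n), \ldots, \mathbf{H}_n(n))$ then lies in $\boldsymbol{\zeta}(n)$ with strictly smaller filtered degree, and a descending induction finishes the argument. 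The only nontrivial point---and the main obstacle---is the principal-symbol computation for $\mathbf{H}_k(n)$: one must check carefully that every diagonal shift in the Capelli matrix genuinely drops the filtered degree, so that only the leading noncommutative determinant survives in $Gr^k$, and that its image in the symmetric algebra matches $\mathbf{h}_k(n)$ under the canonical identification $\mathbf{Sym}(gl(n)) \cong {\mathbb C}[M_{n,n}]$; once that is in place, the classical invariant theory plus a routine filtration induction close the proof.
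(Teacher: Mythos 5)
Your proposal is mathematically sound, but there is no in-paper proof to measure it against: the paper introduces this statement with ``In passing we recall Capelli's Theorem'' and simply cites Capelli's original papers and \cite{Brini4-BR}. What you supply is the standard modern argument: compute that the principal symbol of $\mathbf{H}_k(n)$ in the PBW-associated graded algebra is $\mathbf{h}_k(n)$ (the diagonal shifts $(k-1), (k-2), \ldots$ in the Capelli column determinant each remove a generator and hence live in strictly lower filtration degree), feed in the first fundamental theorem for conjugation invariants (the paper's Proposition \ref{First Fund}, which the paper also states without proof), and close with the usual filtration induction for generation and the top-symbol argument for algebraic independence; all of these steps go through, including the implicit one that the symbol of a central element is $ad_{gl(n)}$-invariant because the adjoint action preserves the filtration. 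Your observation that $\mathcal{K}$ is only a linear isomorphism, so Corollary \ref{invariant} cannot be used to transport algebra structure directly, is precisely the point the paper itself makes later when it insists that $\mathbf{K}_{\lambda}(n)$ (whose Koszul image is $\pm\,\mathbf{h}_{\lambda_1}(n)\cdots\mathbf{h}_{\lambda_p}(n)$) is \emph{radically different} from the product $\mathbf{H}_{\lambda_1}(n)\cdots\mathbf{H}_{\lambda_p}(n)$; where the paper sidesteps this by working with the linear basis $\{\mathbf{K}_{\lambda}(n)\}$ and deducing only an equivalence of basis theorems, you restore multiplicativity by descending to the associated graded algebra, where the Koszul map and the PBW symbol map coincide. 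That is a legitimate and self-contained route to the algebra statement, granted the classical invariant-theoretic input.
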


In general, given a  partition $\lambda = (\lambda_1, \lambda_2, \ldots, \lambda_p)$,
 $\lambda_1 \leq n$, consider the sum of Capelli
bitableaux
$$
\mathbf{K}_{\lambda}(n) = \sum_S \ [S|S],
$$
where the sum is extended to all row-increasing tableaux $S$, $sh(S) = \lambda$
(the $\mathbf{K}_{\lambda}(n)$'s are called \emph{shaped} Capelli elements in \cite{BriniTeolis-BR}).
Notice that the elements 
$\mathbf{K}_{\lambda}(n)$ are \emph{radically different}
from the elements 
$\mathbf{H}_{\lambda}(n) = \mathbf{H}_{\lambda_1}(n) \cdots \mathbf{H}_{\lambda_p}(n)$.

From Corollary \ref{K action}, Eqs. (\ref{bitableau}) and (\ref{crossing sign}) and \emph{row} skew-symmetry of bitableaux, we infer
\begin{proposition} We have
$$
\mathcal{K} \big( \mathbf{K}_{\lambda}(n) \big) = (-1)^{|\lambda| \choose 2} \ 
\mathbf{h}_{\lambda_1}(n)\mathbf{h}_{\lambda_2}(n) 
\cdots \mathbf{h}_{\lambda_p}(n), \quad |\lambda| = \sum_i \ \lambda_i.
$$
\end{proposition}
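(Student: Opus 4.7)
The plan is to apply Corollary \ref{K action} term by term to $\mathbf{K}_{\lambda}(n)=\sum_S [S|S]$, obtaining $\mathcal{K}\bigl(\mathbf{K}_{\lambda}(n)\bigr)=\sum_S (S|S)$, and then to perform an explicit sign computation to factor each $(S|S)$ as a signed product of determinants, one per row. The summation over row-increasing tableaux then splits as a product over rows, which is exactly $\prod_i \mathbf{h}_{\lambda_i}(n)$.

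In detail, let $S=(\omega_1,\ldots,\omega_p)$ with $\omega_i=a^{(i)}_1 a^{(i)}_2\cdots a^{(i)}_{\lambda_i}$ strictly increasing. By (\ref{bitableau})--(\ref{crossing sign}),
$$
(S|S)=\mathbf{s}\,(\omega_1|\omega_1)(\omega_2|\omega_2)\cdots(\omega_p|\omega_p),
$$
with sign exponent $\sum_{k=2}^p \lambda_k(\lambda_1+\cdots+\lambda_{k-1})=\sum_{i<j}\lambda_i\lambda_j$. Using the identity $\binom{|\lambda|}{2}=\sum_i\binom{\lambda_i}{2}+\sum_{i<j}\lambda_i\lambda_j$, this exponent equals $\binom{|\lambda|}{2}-\sum_i\binom{\lambda_i}{2}$. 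On the other hand, each row biproduct is
$$
(\omega_i|\omega_i)=(-1)^{\binom{\lambda_i}{2}}\,\det\!\bigl[(a^{(i)}_r|a^{(i)}_s)\bigr]_{r,s=1,\ldots,\lambda_i},
$$
so the per-row signs $(-1)^{\sum_i\binom{\lambda_i}{2}}$ cancel exactly the corresponding part of $\mathbf{s}$, leaving
$$
(S|S)=(-1)^{\binom{|\lambda|}{2}}\,\prod_{i=1}^p \det\!\bigl[(a^{(i)}_r|a^{(i)}_s)\bigr].
$$

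Summing over all row-increasing $S$ of shape $\lambda$, the rows are chosen independently, so the sum factors:
$$
\sum_S\,\prod_{i=1}^p\det\!\bigl[(a^{(i)}_r|a^{(i)}_s)\bigr]
=\prod_{i=1}^p\;\Bigl(\sum_{1\leq b_1<\cdots<b_{\lambda_i}\leq n}\det\!\bigl[(b_r|b_s)\bigr]\Bigr)
=\prod_{i=1}^p \mathbf{h}_{\lambda_i}(n),
$$
using the explicit determinantal formula for $\mathbf{h}_k(n)$ given right after the definition (\ref{k Capelli}) in the excerpt. Combining with the global sign yields the claimed identity.

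The only real point requiring care is the sign bookkeeping in the second step: one must verify that the crossing sign $\mathbf{s}$ from the bitableau convention combines correctly with the $(-1)^{\binom{\lambda_i}{2}}$ normalization of each biproduct to give a single global sign $(-1)^{\binom{|\lambda|}{2}}$ independent of $S$. Once this binomial identity $\binom{|\lambda|}{2}=\sum_i\binom{\lambda_i}{2}+\sum_{i<j}\lambda_i\lambda_j$ is used, the remaining steps are essentially automatic, since row-increasing tableaux of shape $\lambda$ are precisely unordered $p$-tuples of strictly increasing sequences of lengths $\lambda_1,\ldots,\lambda_p$, one per row.
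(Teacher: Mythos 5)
Your proof is correct and follows essentially the same route the paper indicates (apply $\mathcal{K}$ termwise via Corollary \ref{K action}, then combine the crossing sign of Eq.~(\ref{crossing sign}) with the $(-1)^{\binom{\lambda_i}{2}}$ normalizations of the row biproducts via $\binom{|\lambda|}{2}=\sum_i\binom{\lambda_i}{2}+\sum_{i<j}\lambda_i\lambda_j$, and factor the sum row by row); the sign bookkeeping checks out. One wording quibble: the rows of a row-increasing tableau form an \emph{ordered} $p$-tuple of increasing sequences, which is exactly what makes the final factorization $\sum_S\prod_i=\prod_i\sum$ legitimate, so "unordered" in your last sentence should be deleted.
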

Hence, the elements  $\mathbf{K}_{\lambda}(n)$ are central. By Corollary \ref{invariant},
the following statements are equivalent:

\begin{itemize}

\item [--] The $\mathbf{K}_{\lambda}(n)-$basis theorem for $\boldsymbol{\zeta}(n)$ \cite{BriniTeolis-BR}:
\begin{proposition}\label{K basis} The set
$$
\Big\{\mathbf{K}_{\lambda}(n); \ \lambda_1 \leq n \Big\}
$$
is a linear basis of $\boldsymbol{\zeta}(n)$. 
\end{proposition}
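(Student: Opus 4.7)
The plan is to exploit the equivalence signaled by the paragraph preceding the statement: by Corollary \ref{invariant} the map $\mathcal{K}$ is a linear bijection $\boldsymbol{\zeta}(n) \xrightarrow{\ \sim\ } {\mathbb C}[M_{n,n}]^{ad_{gl(n)}}$, and by the preceding Proposition it sends
$$\mathbf{K}_\lambda(n) \ \longmapsto \ (-1)^{|\lambda| \choose 2}\, \mathbf{h}_{\lambda_1}(n) \mathbf{h}_{\lambda_2}(n) \cdots \mathbf{h}_{\lambda_p}(n).$$
Hence the statement is equivalent to the assertion that the set $\{\mathbf{h}_{\lambda_1}(n) \cdots \mathbf{h}_{\lambda_p}(n) : \lambda_1 \leq n\}$ is a linear basis of ${\mathbb C}[M_{n,n}]^{ad_{gl(n)}}$. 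Using the canonical bijection between partitions with parts $\leq n$ and monomials in $n$ indeterminates (the exponent of $\mathbf{h}_i(n)$ being the multiplicity $m_i(\lambda)$ of $i$ in $\lambda$), this amounts to showing that the characteristic-polynomial coefficients $\mathbf{h}_1(n), \ldots, \mathbf{h}_n(n)$ freely generate the invariant algebra.

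I would then split this into two independent ingredients. First, \emph{algebraic independence} of the $\mathbf{h}_k(n)$ in ${\mathbb C}[M_{n,n}]$ is established by the classical specialization $(i|j) \mapsto x_i \delta_{ij}$ to the diagonal subalgebra: each principal minor degenerates, so $\mathbf{h}_k(n)$ goes to the elementary symmetric polynomial $e_k(x_1, \ldots, x_n)$; since $e_1, \ldots, e_n$ are well-known to be algebraically independent in ${\mathbb C}[x_1, \ldots, x_n]$, any linear relation among the products $\mathbf{h}_{\lambda_1}(n) \cdots \mathbf{h}_{\lambda_p}(n)$ (which specialize to distinct monomials in the $e_i$'s) must be trivial. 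This yields linear independence of the family in ${\mathbb C}[M_{n,n}]$, a fortiori in the invariant subring.

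Second, \emph{spanning} may be obtained either externally, via the classical first fundamental theorem for the conjugation action of $GL_n$ on $M_{n,n}$ — invariants are polynomials in the characteristic-polynomial coefficients — or internally, by a dimension count staying within the paper: Capelli's theorem recalled just above gives the basis $\{\mathbf{H}_{\lambda_1}(n) \cdots \mathbf{H}_{\lambda_p}(n) : \lambda_1 \leq n\}$ of $\boldsymbol{\zeta}(n)$; transporting this basis across the linear isomorphism $\mathcal{K}$ fixes the dimension of ${\mathbb C}[M_{n,n}]^{ad_{gl(n)}}$ (degree by degree) as the number of partitions of the given weight with parts $\leq n$, so a linearly independent family of the same partition-indexed cardinality must be a basis.

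The principal technical nuisance is the degree bookkeeping in the internal dimension-count route: one needs the dimensions to match in each PBW-filtration degree on the $\mathbf{U}$-side and each polynomial degree on the ${\mathbb C}[M_{n,n}]$-side. The required compatibility is evident from the recursion $\mathcal{K}(e_{ij}\mathbf{P}) = \rho_{ij}(\mathcal{K}(\mathbf{P}))$, since $\rho_{ij}$ raises polynomial degree by at most one, so $\mathcal{K}$ respects the PBW filtration; on the associated graded level $\mathcal{K}$ reduces to multiplication of the highest-degree term by $(i|j)$, i.e.\ to the standard symmetrization isomorphism. This entire step is bypassed if one is willing to invoke the first fundamental theorem of matrix invariants, after which the argument closes on the strength of the first ingredient alone.
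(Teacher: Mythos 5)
Your reduction via Corollary \ref{invariant} and the formula $\mathcal{K}\big(\mathbf{K}_{\lambda}(n)\big) = (-1)^{|\lambda| \choose 2}\,\mathbf{h}_{\lambda_1}(n)\cdots\mathbf{h}_{\lambda_p}(n)$ is exactly the paper's argument: the paper declares Proposition \ref{K basis} equivalent to the classical statement (Proposition \ref{First Fund}) that the $\mathbf{h}_k(n)$'s freely generate ${\mathbb C}[M_{n,n}]^{ad_{gl(n)}}$, and takes the latter as well known. Your supplementary ingredients (diagonal specialization to elementary symmetric polynomials for independence, the first fundamental theorem or a filtration-matching dimension count for spanning) simply supply that classical input, which the paper delegates to the references.
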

Notice that the elements $\mathbf{K}_{\lambda}(n)$ are \emph{radically different} 
from the \emph{quantum immanants} of \cite{Okounkov-BR}, \cite{Okounkov1-BR} and \cite{Brini5-BR}.
\item [--]
The  well-known theorem 
for the algebra of invariants ${\mathbb C}[M_{n,n}]^{ad_{gl(n)}}$:
\begin{proposition} \label{First Fund}
$$
{\mathbb C}[M_{n,n}]^{ad_{gl(n)}} = 
{\mathbb C} \big[ \mathbf{h}_1(n), \mathbf{h}_2(n), \ldots, \mathbf{h}_n(n) \big].
$$
Moreover, the $\mathbf{h}_k(n)$'s are algebraically independent.
\end{proposition}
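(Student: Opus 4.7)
The plan is to translate Proposition \ref{K basis} across the linear isomorphism $\mathcal{K}\colon\boldsymbol{\zeta}(n)\to{\mathbb C}[M_{n,n}]^{ad_{gl(n)}}$ of Corollary \ref{invariant}, using the identity
$$
\mathcal{K}(\mathbf{K}_\lambda(n))=(-1)^{|\lambda|\choose 2}\,\mathbf{h}_{\lambda_1}(n)\,\mathbf{h}_{\lambda_2}(n)\cdots\mathbf{h}_{\lambda_p}(n)
$$
proved in the paragraph just before the present proposition. Since $\mathcal{K}$ is a linear isomorphism between these two invariant subalgebras, the $\mathbf{K}_\lambda$-basis statement on the enveloping-algebra side is equivalent to a linear basis statement for its image on the polynomial side, which by the displayed formula is (up to the nonzero scalars $(-1)^{|\lambda|\choose 2}$) the family of commutative monomials in the $\mathbf{h}_k(n)$'s.

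Concretely, I would first invoke Proposition \ref{K basis} together with Corollary \ref{invariant} to conclude that the family
$$
\big\{\mathbf{h}_{\lambda_1}(n)\cdots\mathbf{h}_{\lambda_p}(n):\lambda=(\lambda_1\ge\cdots\ge\lambda_p),\ \lambda_1\le n\big\}
$$
is a linear basis of ${\mathbb C}[M_{n,n}]^{ad_{gl(n)}}$. Next, I would identify this indexed family with the full family of monomials $\mathbf{h}_1(n)^{a_1}\cdots\mathbf{h}_n(n)^{a_n}$ via the standard bijection $\lambda\leftrightarrow(a_1,\ldots,a_n)$ with $a_k=\#\{i:\lambda_i=k\}$, between partitions having $\lambda_1\le n$ and exponent vectors in ${\mathbb Z}_{\ge 0}^n$. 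The spanning half of the resulting basis statement yields ${\mathbb C}[M_{n,n}]^{ad_{gl(n)}}\subseteq{\mathbb C}\big[\mathbf{h}_1(n),\ldots,\mathbf{h}_n(n)\big]$; the reverse inclusion is immediate, since each $\mathbf{h}_k(n)=\mathcal{K}(\mathbf{H}_k(n))$ already lies in the invariant subalgebra by Corollary \ref{invariant}. The linear-independence half is literally the algebraic independence of $\mathbf{h}_1(n),\ldots,\mathbf{h}_n(n)$ over ${\mathbb C}$, because any polynomial relation among these generators is exactly a linear relation among distinct commutative monomials.

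The only real obstacle in this plan is input-level: one must have Proposition \ref{K basis} available as a black box, which is cited from \cite{BriniTeolis-BR}. Given that, no PBW-filtration argument is needed, because the formula for $\mathcal{K}(\mathbf{K}_\lambda(n))$ is a genuine equality of polynomials rather than merely an equality of leading symbols; this is the whole point of having $\mathcal{K}$ act transparently on column-based bases. If one preferred to avoid invoking Proposition \ref{K basis}, the same argument runs in reverse: assume the classical first fundamental theorem of $gl(n)$-invariants (namely Proposition \ref{First Fund}) and deduce Proposition \ref{K basis} from it. This is precisely the content of the equivalence announced immediately before the statement, so the two propositions are logically interchangeable under Corollary \ref{invariant}.
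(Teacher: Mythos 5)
Your argument is correct and is exactly the equivalence the paper itself asserts: the paper presents Proposition \ref{K basis} and Proposition \ref{First Fund} as interchangeable under Corollary \ref{invariant} together with the formula $\mathcal{K}(\mathbf{K}_\lambda(n))=(-1)^{|\lambda|\choose 2}\mathbf{h}_{\lambda_1}(n)\cdots\mathbf{h}_{\lambda_p}(n)$, and you have simply written out the \ref{K basis} $\Rightarrow$ \ref{First Fund} direction in detail (partition--monomial bijection, spanning, and linear independence of monomials as algebraic independence), all of which checks out. The only caveat, which you already flag, is that the paper leans on the classical references for \ref{First Fund} and uses the equivalence chiefly to motivate \ref{K basis}, so your direction is only non-circular if \ref{K basis} is established in \cite{BriniTeolis-BR} independently of the first fundamental theorem.
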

Proposition \ref{First Fund} is usually stated in terms of the
algebra ${\mathbb C}[M_{n,n}]^{GL(n)} = {\mathbb C}[M_{n,n}]^{ad_{gl(n)}}$, where
${\mathbb C}[M_{n,n}]^{GL(n)}$ is the subalgebra of invariants with respect to the \emph{conjugation
action} of the general linear group ${GL(n)}$ on ${\mathbb C}[M_{n,n}]$
(see, e.g. \cite{KP-BR}, \cite{DEP-BR}, \cite{Procesi-BR}).
\end{itemize}

\end{document}